\newcounter{NiveauDetails}
\ifnum \arabic{NiveauDetails} > 0 \usepackage{showlabels} \else\fi
\theoremstyle{plain}
\newtheorem{theo}{Theorem}[section]
\newtheorem{lemm}[theo]{Lemma}
\newtheorem{prop}[theo]{Proposition}
\newtheorem{coro}[theo]{Corollary}
\newtheorem*{theoA}{Theorem A}
\newtheorem*{coroB}{Corollary B}
\newtheorem*{coroC}{Corollary C}
\newtheorem*{theorem*}{Theorem }
\theoremstyle{definition}
\newtheorem{defi}[theo]{Definition}
\newtheorem{nota}[theo]{Notation}
\newtheorem{exem}[theo]{Example}
\newtheorem{assu}[theo]{Assumption}
\newtheorem{rema}[theo]{Remark}
\def\cal{\mathcal}
\def\a{\alpha}
\def\b{\beta}
\def\g{\gamma}
\def\G{\Gamma}
\def\rhoc{\rho_{\rm cpt}}
\def\ad{{\rm ad}}
\def\deg{{\rm deg}\,}
\def\SU{{\rm SU}}
\def\Spin{{\rm Spin}}
\def\GL{{\rm GL}}
\def\End{{\rm End}}
\def\Hom{{\rm Hom}}
\def\U{{\rm U}}
\def\SL{{\rm SL}}
\def\rk{{\rm rk}\hspace{.5pt}}
\def\Ker{{\rm Ker\,}}
\def\Im{{\rm Im\,}}
\def\vol{{\rm vol}}
\def\Sp{{\rm Sp}}
\def\pfd{\rightarrow}
\def\t{\theta}
\def\ds{\displaystyle}
\def\o{\omega}
\def\L{{\mathcal L}}
\def\F{{\mathcal F}}
\def\T{{\mathcal T}}
\def\S{{\mathcal S}}
\def\gg{{\mathfrak g}}
\def\kg{{\mathfrak k}}
\def\ug{{\mathfrak u}}
\def\r{\rho}
\def\V{{\mathbb V}}
\def\W{{\mathbb W}}
\def\E{{\mathbb E}}
\def\H{{{\mathbb H}^n_{\mathbb C}}}
\def\PTX{{{\mathbb P}T_X}}
\def\PTH{{\mathbb P}T_{{\mathbb H}^n_{\mathbb C}}}
\def\OX{{\mathcal O}_{\PTX}(-1)}
\def\F{{\mathcal F}}
\def\CP{{\mathbb{CP}}}
\def\su{{\mathfrak{su}}}
\newcommand{\suma}[1]{\alpha_1^\vee + \cdots + \alpha_{#1}^\vee}
\newcommand{\av}{\alpha^\vee}
\newcommand{\Z}{\mathbb Z}
\newcommand{\R}{\mathbb{R}}
\newcommand{\Q}{\mathbb{Q}}
\newcommand{\C}{\mathbb{C}}
\newcommand{\pe}[1]{\ifnum \arabic{NiveauDetails} > 0 \color{OliveGreen} #1 \color{black} \else\fi}
\newcommand{\indication}[1]{\ifnum \arabic{NiveauDetails} > 1 \color{Maroon} #1 \color{black} \else\fi}
\newcommand{\commentaire}[1]{}
\newcommand{\lpara}{\vskip 2mm}
\newcommand{\mpara}{\vskip 5mm}
\def\qed{\hfill $\square$}
 \newcommand{\fe}{\mathfrak e} 
\newcommand{\fg}{\mathfrak g} \newcommand{\fh}{\mathfrak h} 
 \newcommand{\fk}{\mathfrak k} \newcommand{\fl}{\mathfrak l}
\newcommand{\fp}{\mathfrak p} \newcommand{\fq}{\mathfrak q} 
 \newcommand{\ft}{\mathfrak t} \newcommand{\fu}{\mathfrak u}
 \newcommand{\fz}{\mathfrak z}
\newcommand{\fsl}{\mathfrak{sl}}
\def\bA{{\mathbb A}} \def\bB{{\mathbb B}} \def\bC{{\mathbb C}} 
\def\bE{{\mathbb E}} \def\bF{{\mathbb F}}  
   \def\bO{{\mathbb O}}
   \def\bV{{\mathbb V}}
\newcommand{\poidsesixnu}[6]{
\hspace{-.12cm}
\begin{array}{ccccc}
{} \hspace{0cm} #1 & {} \hspace{-.1cm} #2 & {} \hspace{-.1cm} #3 &
{} \hspace{-.1cm} #4 & {} \hspace{-.1cm} #5 \vspace{-.13cm}\\
\hspace{-.0cm} & \hspace{-.1cm} & {} \hspace{-.1cm} #6 &
{} \hspace{-.1cm} & {} \hspace{-.1cm}
\end{array}
\hspace{-.2cm}     }
\newcommand{\poidseseptnu}[7]{
\hspace{-.12cm}
\begin{array}{cccccc}
{} \hspace{0cm} #1 & {} \hspace{-.1cm} #2 & {} \hspace{-.1cm} #3 &
{} \hspace{-.1cm} #4 & {} \hspace{-.1cm} #5
& {} \hspace{-.1cm} #6 \vspace{-.13cm}\\
\hspace{-.0cm} & \hspace{-.1cm} & {} \hspace{-.1cm} #7 &
{} \hspace{-.1cm} & {} \hspace{-.1cm} & {} \hspace{-.1cm}
\end{array}
\hspace{-.2cm}     }
\newcommand{\p}{{\mathbb P}}
\renewcommand{\P}{{\mathbb P}}
\renewcommand{\iff}{if and only if }
\newcommand{\scal}[1]{\langle #1 \rangle}
\newcommand{\im}{\mathrm{Im}\,}
\renewcommand{\det}{\mathrm{det}}
\newcommand{\codim}{\mbox{codim}}
\def\cC{\mathcal{C}}  \def\cE{\mathcal{E}}
 \def\cL{\mathcal{L}}  
\def\cO{\mathcal{O}}
\def\cV{\mathcal{V}} \def\cX{\mathcal{X}}  
\def\bA{{\mathbb A}} \def\bB{{\mathbb B}} \def\bC{{\mathbb C}} 
\def\bE{{\mathbb E}} \def\bF{{\mathbb F}}  
   \def\bO{{\mathbb O}}
   \def\bU{{\mathbb U}} \def\bV{{\mathbb V}}
 \def\fh{\mathfrak{h}}
 \def\ft{\mathfrak{t}} \def\fu{\mathfrak{u}}
\def\fz{\mathfrak{z}}
\newcommand{\zmax}{z_{\mathtt{max}}}
\newcommand{\Zmax}{Z_{\mathtt{max}}}
\newcommand{\Ek}{\E^{\otimes k}}
\newcommand{\Vk}{\V^{\otimes k}}
\newcommand{\cEk}{\cE^{\otimes k}}
\def\ovE{\bar{E}}
\def\ovt{\bar{\theta}}
\def\ovb{\bar{\beta}}
\def\ovg{\bar{\gamma}}
\begin{document}

\title[Maximal representations in exceptional Hermitian groups]{Maximal representations\\ of uniform complex hyperbolic lattices\\ in exceptional Hermitian Lie groups}
\author{Pierre-Emmanuel Chaput, Julien Maubon}
\maketitle

\tableofcontents

\section{Introduction}

This paper deals with maximal representations of complex hyperbolic lattices in semisimple Hermitian
Lie groups with no compact factors.

A {\em complex hyperbolic lattice} $\G$ is a lattice in the Lie group $\SU(1,n)$, a finite
cover of the group of biholomorphisms of the $n$-dimensional complex hyperbolic space
$\H=\SU(1,n)/\U(n)$. Unless otherwise
specified, we shall always
assume that our lattice $\G$ is {\em uniform}, meaning that the quotient $X:=\G\backslash\H$ is
compact, and that it  is {\em torsion free}, so that $X$ is also a
manifold. The Kähler form on $X$ induced by the
$\SU(1,n)$-invariant Kähler form on $\H$ with constant holomorphic sectional curvature $-1$ will be
denoted by $\o$.

A semisimple Lie group with no compact factors $G_\R$ is said to be {\em Hermitian} if its
associated symmetric space $M=G_\R/K_\R$ is a Hermitian symmetric space
(of the noncompact type). This means that the symmetric space $M$ admits a $G_\R$-invariant complex
structure, which makes it a Kähler manifold.
We will call $\o_M$ the $G_\R$-invariant Kähler form of
$M$, normalized so that its holomorphic sectional curvatures lie between $-1$ and $-\frac 1{\rk M}$,
where $\rk M$ is the rank of the symmetric space $M$, or equivalently the real rank $\rk_\R G_\R$ of
$G_\R$. We will also assume that the complexification $G$ of $G_\R$ is simply connected.

If $\rho$ is a representation (a group homomorphism) from $\G$ to $G_\R$, we can
define its {\em Toledo
invariant} $\tau(\rho)$ as follows:
$$
\tau(\rho)=\frac 1{n!}\int_X f^\star\o_M\wedge\o^{n-1}, 
$$
where $f:\H\pfd M$ is a $\cC^\infty$ and $\rho$-equivariant map and $f^\star\o_M$ is seen as a 2-form on $X$ by
$\G$-invariance. The Toledo invariant does not depend on the choice of the map $f$, it depends only
on $\rho$, and in fact, only on the connected component of $\rho$ in $\Hom(\G,G_\R)$. Moreover, it
satifies the following Milnor-Wood type inequality:
$$
|\tau(\rho)|\leq \rk M\,\vol(X),
$$
a fundamental property established in full generality in~\cite{BI07}.

{\em Maximal representations} $\rho:\G\pfd G_\R$ are those representations
for which the Milnor-Wood inequality is an equality.

\mpara

In \cite{KM}, Koziarz and the second named author classified maximal representations when $G_\R$ is a
classical group.
In the present work, we extend this classification to all Hermitian groups, and we prove:

\begin{theoA}
  Let $\G$ be a torsion free uniform lattice in $\SU(1,n)$, $n\geq 2$, and let $\rho$ be a maximal representation
  of $\G$ in a Hermitian Lie group $G_\R$.

  Then there exists a unique $\rho$-equivariant harmonic map $f$ from
  $\H$ to the associated symmetric space $M$. If $\tau(\rho)>0$, $f$  is holomorphic and satisfies
  $f^\star\o_M=\rk M\,\o$. If $\tau(\rho)<0$, $f$ is antiholomorphic and satisfies $f^\star\o_M=-\rk M\,\o$.
  Moreover $f$ is a totally geodesic embedding.
\end{theoA}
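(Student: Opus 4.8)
The plan is to construct the map by harmonic-map theory, upgrade harmonicity to pluriharmonicity because the domain is Kähler, read holomorphicity off the maximality of the Toledo invariant, and finally extract the totally geodesic embedding from the equality case of a Gauss-type curvature inequality. First I would note that a maximal representation is necessarily reductive: if the Zariski closure of $\rho(\G)$ were contained in a proper parabolic subgroup, then $\rho$ could be deformed to its Levi part without changing $\tau(\rho)$, landing in a Hermitian subgroup of strictly smaller rank, whose Milnor--Wood bound is strictly below $\rk M\,\vol(X)$ --- contradicting maximality. Since $X=\G\backslash\H$ is compact and the target $M$ is a $\mathrm{CAT}(0)$ space, Corlette's existence theorem (and Donaldson, Labourie) then provides a $\rho$-equivariant harmonic map $f:\H\pfd M$. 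Uniqueness I would postpone: two equivariant harmonic maps into a nonpositively curved target have convex mutual displacement, which descends to the compact $X$ and is therefore constant, and the flat strip it would produce is ruled out once the image is known to be a rank-one, hence flat-free, totally geodesic copy of $\H$.

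Next, since $X$ is compact Kähler and $M$ has nonpositive Hermitian curvature (as do all symmetric spaces of noncompact type), Sampson's theorem shows $f$ is pluriharmonic. Decomposing $df=\partial f+\bar\partial f$ into its $\C$-linear and $\C$-antilinear parts and writing $\mathrm{tr}_\o f^\star\o_M=|\partial f|^2-|\bar\partial f|^2$, the Toledo integral becomes, up to a universal constant,
$$
\tau(\rho)=\frac{1}{n\cdot n!}\int_X\big(|\partial f|^2-|\bar\partial f|^2\big)\,\o^n .
$$
The crucial step is then a pointwise Schwarz-type inequality: restricting the pluriharmonic map to the complex geodesics $\Ho\subset\H$ and comparing the constant holomorphic sectional curvature $-1$ of the domain with the upper bound $-\tfrac1{\rk M}$ on the holomorphic sectional curvatures of $M$, one gets $\mathrm{tr}_\o f^\star\o_M\le n\,\rk M$ everywhere, which integrates to exactly the Milnor--Wood inequality. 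When $\tau(\rho)=\rk M\,\vol(X)$ this bound is a pointwise equality, forcing $\bar\partial f=0$ (so $f$ is holomorphic) together with $f^\star\o_M=\rk M\,\o$; the case $\tau(\rho)<0$ follows by reversing the complex structure, giving $\partial f=0$ and $f^\star\o_M=-\rk M\,\o$.

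Assuming $\tau(\rho)>0$, holomorphicity makes $f^\star\o_M$ determine the full pullback metric, so $f$ is an isometric immersion up to the factor $\rk M$ and its image is a complex submanifold of $M$ whose induced metric has constant holomorphic sectional curvature $-\tfrac1{\rk M}$. The Gauss equation for complex submanifolds reads $H_{\mathrm{image}}=H_M-2\,|\sigma|^2$, where $\sigma$ is the second fundamental form; since $H_{\mathrm{image}}=-\tfrac1{\rk M}$ while $H_M\le-\tfrac1{\rk M}$, both inequalities saturate, so $\sigma\equiv0$ and $f$ is totally geodesic (its image moreover realizing the directions of least negative holomorphic sectional curvature of $M$). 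Finally, a totally geodesic isometric immersion between Cartan--Hadamard manifolds carries distinct geodesics to distinct geodesics and has no conjugate points, hence is injective and proper; thus $f$ is an embedding, and the displacement argument above now yields uniqueness.

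The decisive difficulty, and the genuinely new input compared with the classical case treated in~\cite{KM}, is the sharp pointwise inequality together with its equality discussion: both the constant $\rk M$ and the description of the extremal holomorphic directions rest on precise curvature information about the target. For the two exceptional Hermitian symmetric spaces $E_{6(-14)}/(\Spin(10)\cdot\U(1))$ and $E_{7(-25)}/(E_6\cdot\U(1))$ this information cannot be taken from the matrix models available in the classical setting and must instead be computed Lie-theoretically, which I expect to be the main technical burden of the argument.
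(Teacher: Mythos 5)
Your outline founders precisely at the step you yourself identify as crucial. The pointwise bound $\mathrm{tr}_\o f^\star\o_M\le n\,\rk M$ is an Ahlfors--Schwarz-type statement, and such statements require the map to be \emph{holomorphic}: Royden's lemma, which the paper invokes, is a lemma about holomorphic maps. For a map that is merely pluriharmonic no such pointwise bound holds, already in the simplest rank-one case. Indeed, write $J=|\partial u|^2-|\bar\partial u|^2$ for a harmonic diffeomorphism $u$ between two closed hyperbolic surfaces of the same genus; by Gauss--Bonnet $\int J$ equals the area of the source, so a pointwise bound $J\le 1$ would force $J\equiv 1$, which (via the Bochner formula and the fact that a holomorphic quadratic differential of constant modulus on a compact hyperbolic surface vanishes) forces $u$ to be an isometry --- contradicting the existence of harmonic diffeomorphisms between non-isometric surfaces. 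So holomorphicity cannot be read off a curvature comparison for the pluriharmonic map; it must be proved \emph{first}, and that is exactly what occupies the bulk of the paper: the Milnor--Wood inequality is obtained from leafwise stability of the Higgs bundle associated with the cominuscule module on the foliated manifold $\PTX$ (Theorem~\ref{theo:MW}), and holomorphicity in the equality case then requires a separate case analysis --- tube-type targets such as ${\rm E}_{7(-25)}$ are excluded by the sharper inequality of Proposition~\ref{prop:tube}, the target ${\rm E}_{6(-14)}$ needs the octonionic rank analysis of Proposition~\ref{prop:vanishing} and Theorem~\ref{thm:E6} together with a density statement resting on Ratner theory (Proposition~\ref{prop:equal}), and classical targets are handled in \cite{KM}. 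Only after holomorphicity is acquired does the paper apply Ahlfors--Schwarz to obtain $f^\star\o_M=\rk M\,\o$ and total geodesy; your shortcut would render all of this machinery unnecessary, which is a strong indication it cannot work. (Your existence step via Corlette and pluriharmonicity via Sampson, and the equality analysis \emph{after} holomorphicity, do agree with the paper.)

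A second, lesser gap is uniqueness. If $f'=g\circ f$ is another equivariant harmonic map and the displacement $d(f,f')$ is constant equal to $a>0$, the resulting flat strip sits in $M$ transverse to the image $N=f(\H)$; it is not contained in $N$, so the fact that $N$ is a rank-one, flat-free submanifold rules out nothing --- and $M$ itself, having rank at least $2$ for every target that actually occurs, certainly contains flats. The paper's argument instead converts the strip (via the sandwich lemma) into a one-parameter group of transvections commuting with $H_\R=\varphi(\SU(1,n))$, contradicting the \emph{compactness of the centralizer} $Z_\R$ (Lemma~\ref{lemm:Z}), and even then needs Lemma~\ref{lemm:fixator} and the Borel density theorem to conclude $f'=f$. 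Some substitute for the compactness of $Z_\R$ is unavoidable in any uniqueness proof along these lines.
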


It follows quite easily from this that the map $f$ of the theorem is \emph{tight}.
Such maps between Hermitian symmetric spaces were classified in~\cite{Hamlet}, and
from his classification we deduce:

\begin{coroB}
  Under the assumptions of Theorem A, each simple factor of $G_\R$ is either isogenous to
  $\SU(p,q)$ for some $(p,q)$ with $q\geq n\, p$,
  or to the exceptional group ${\rm E_{6(-14)}}$, the latter being possible only if $n=2$.
\end{coroB}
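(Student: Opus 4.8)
The plan is to deduce Corollary B from Theorem A by turning its rigidity statement into tightness and then quoting Hamlet's classification. Assume $\tau(\rho)>0$, the case $\tau(\rho)<0$ being identical after conjugating $f$. Then the harmonic map $f:\H\to M$ of Theorem A is a holomorphic totally geodesic embedding with $f^\star\o_M=\rk M\,\o$, and the first thing I would record is that $f$ is \emph{tight}. This is exactly where the normalization of $\o_M$ (holomorphic sectional curvatures in $[-1,-\tfrac1{\rk M}]$) is used: on the totally geodesic complex submanifold $f(\H)\subset M$ the induced metric is $\rk M$ times the constant curvature $-1$ metric of $\H$, so $f(\H)$ has constant holomorphic sectional curvature $-\tfrac1{\rk M}$, the minimum allowed in $M$. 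An embedded holomorphic $\H$ realizing this minimal holomorphic sectional curvature is precisely what it means for the totally geodesic holomorphic embedding $f$ to be tight, so tightness indeed follows at once from Theorem A.

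Next I would reduce to the irreducible factors. Write $M=\prod_i M_i$ for the decomposition of $M$ into irreducible Hermitian symmetric spaces, corresponding up to isogeny to the simple factors of $G_\R$, and let $f_i:\H\to M_i$ be the composition of $f$ with the projection $M\to M_i$. Since the Toledo invariant is additive over the factors and each factor obeys its own Milnor-Wood inequality with $\rk M=\sum_i\rk M_i$, maximality of $\rho$ forces $\tau_i=\rk M_i\,\vol(X)>0$ for every $i$; in particular no $f_i$ is constant and each is again a maximal, hence tight, holomorphic totally geodesic map by Step~1. As a nonconstant totally geodesic map out of the irreducible rank-one space $\H$ is an immersion, each $f_i$ is a tight holomorphic totally geodesic embedding of $\H$ into the irreducible factor $M_i$. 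It therefore suffices to determine, for each $n\geq 2$, which irreducible Hermitian symmetric spaces admit such an embedding.

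This last determination is read off from the classification of tight holomorphic maps in~\cite{Hamlet}. Among the irreducible Hermitian symmetric spaces, the only ones admitting a tight holomorphic totally geodesic embedding of $\H$ with $n\geq2$ are those associated to $\SU(p,q)$ and the one associated to ${\rm E}_{6(-14)}$, the families of type $II$, $III$, $IV$ and of ${\rm E}_{7(-25)}$ being excluded because their rank and curvature invariants cannot accommodate such an embedding for $n\geq2$. For $\SU(p,q)$, the model tight embedding tensors the standard $\C^{1,n}$ by $\C^p$ and lands in $\SU(p,np)$; enlarging the positive-definite directions produces rank-preserving, hence still tight, embeddings into $\SU(p,q)$ for every $q\geq np$, and the classification shows these exhaust the possibilities, yielding the constraint $q\geq np$. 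For ${\rm E}_{6(-14)}$, whose domain has rank $2$ and complex dimension $16$, the tables of~\cite{Hamlet} produce a tight holomorphic totally geodesic $\H$ only when $n=2$. The main obstacle is thus bookkeeping rather than conceptual: one must reconcile our ``minimal holomorphic sectional curvature'' formulation of tightness and our normalizations with the invariants of~\cite{Hamlet}, and check that the relevant entries are genuinely realized by totally geodesic embeddings, so as to extract exactly the conditions $q\geq np$ and $n=2$.
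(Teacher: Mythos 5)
Your proof is correct and follows essentially the same route as the paper: Theorem A yields a tight holomorphic totally geodesic embedding, tightness is checked factor by factor so one may assume $G_\R$ simple, and the conclusion is then read off from Hamlet's classification of tight holomorphic maps. The only cosmetic differences are that the paper reduces to irreducible factors via the fact that $f$ is tight if and only if each projection to an irreducible factor is tight (rather than via additivity of the Toledo invariant), and a sign slip on your part: $-\frac{1}{\rk M}$ is the \emph{largest} (flattest) holomorphic sectional curvature allowed in $M$, not the minimal one.
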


 We can also deduce a structure result
for the representation $\rho$. The map $f$ of Theorem~$A$ is in fact equivariant w.r.t. a
  morphism of Lie groups $\varphi:\SU(1,n) \to G_\R$.  We denote by $H_\R$ the image of $\varphi$ in $G_\R$
and  by  $Z_\R$ the centralizer of $H_\R$ in $G_\R$.

\begin{coroC}
Under the assumptions of Theorem A,  the representation $\rho$ is reductive, discrete, faithful, and acts cocompactly on the image of
  $f$ in $M$. The centralizer $Z_\R$ is compact and
  there exists a group morphism $\rhoc : \G \to Z_\R$ such that
  $$ \forall \gamma \in \Gamma, \,\rho(\gamma) = \varphi(\gamma)\rhoc(\gamma) = \rhoc(\gamma)\varphi(\gamma).$$
\end{coroC}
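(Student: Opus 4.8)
The plan is to exhibit the twist $\rhoc(\gamma):=\varphi(\gamma)^{-1}\rho(\gamma)$ and to identify the subgroup of $G_\R$ fixing the image $F:=f(\H)$ pointwise with the compact group $Z_\R\cap K_\R$, where $p_0:=f(x_0)$ for a basepoint $x_0\in\H$ and $K_\R:=\mathrm{Stab}_{G_\R}(p_0)$ is the corresponding maximal compact subgroup. First I would record that $f$ is simultaneously $\rho$-equivariant and $\varphi$-equivariant, so that for every $\gamma\in\G$ and $x\in\H$ one has $\rho(\gamma)f(x)=f(\gamma x)=\varphi(\gamma)f(x)$. Thus $\rho(\gamma)$ and $\varphi(\gamma)$ agree on $F$, and consequently $\rhoc(\gamma)$ fixes $F$ pointwise.

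The core step is the Lie-algebra identification of the pointwise stabilizer of $F$ with $Z_\R\cap K_\R$. Write $\mg=\kg\oplus\pg$ for the Cartan decomposition at $p_0$ and $\mathfrak h=\varphi_\ast(\su(1,n))=(\mathfrak h\cap\kg)\oplus\mathfrak p_{\mathfrak h}$, with $\mathfrak p_{\mathfrak h}=\mathfrak h\cap\pg\cong T_{p_0}F$; since $\H$ is irreducible of noncompact type, $[\mathfrak p_{\mathfrak h},\mathfrak p_{\mathfrak h}]=\mathfrak h\cap\kg$, so $\mathfrak p_{\mathfrak h}$ generates $\mathfrak h$. If $g$ fixes $F$ pointwise then $g\in K_\R$, and writing $F=\mathrm{Exp}_{p_0}(\mathfrak p_{\mathfrak h})$ the relations $\exp(-\Ad(g^{-1})Y)\exp(Y)\in K_\R$ for $Y\in\mathfrak p_{\mathfrak h}$ together with the injectivity of $\mathrm{Exp}\colon\pg\to M$ (Cartan--Hadamard) force $\Ad(g)Y=Y$ for all $Y\in\mathfrak p_{\mathfrak h}$. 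As $\Ad(g)$ is a Lie-algebra automorphism and $\mathfrak p_{\mathfrak h}$ generates $\mathfrak h$, we get $\Ad(g)|_{\mathfrak h}=\id$, that is $g\in Z_\R$; conversely $Z_\R\cap K_\R$ fixes $F=H_\R p_0$ pointwise. Hence $\rhoc(\gamma)\in Z_\R\cap K_\R$. The relation $\rho(\gamma)=\varphi(\gamma)\rhoc(\gamma)=\rhoc(\gamma)\varphi(\gamma)$ is then immediate since $\rhoc(\gamma)$ centralizes $H_\R\ni\varphi(\gamma)$, and the homomorphism property $\rhoc(\gamma\delta)=\rhoc(\gamma)\rhoc(\delta)$ follows by expanding $\varphi(\gamma\delta)^{-1}\rho(\gamma\delta)$ and moving $\rhoc(\gamma)$ past $\varphi(\delta)$.

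Next come the geometric conclusions. Under $f$ the $\rho(\G)$-action on $F$ coincides with the $\varphi(\G)$-action (as $\rhoc$ acts trivially on $F$), which is conjugate to the $\G$-action on $\H$. Because $\G$ is a torsion-free uniform lattice, this action is free, properly discontinuous and cocompact; reading it back through $f$ shows that $\rho$ acts cocompactly on $F$, that $\rho$ is faithful (if $\rho(\gamma)=e$ then $\varphi(\gamma)$ acts trivially on $F$, so $\gamma$ acts trivially on $\H$, hence $\gamma$ is central and therefore trivial), and that $\rho$ is discrete (a sequence $\rho(\gamma_k)\to e$ would give $\gamma_k|_{\H}\to\id$, forcing $\gamma_k=e$). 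Reductivity of $\rho$ follows from $\rho(\G)\subseteq H_\R Z_\R$, an almost-direct product of the reductive groups $H_\R$ and $Z_\R=Z_{G_\R}(H_\R)$: the Zariski closure of $\rho(\G)$ surjects onto $H_\R$ by Borel density, with kernel a Zariski-closed subgroup of the reductive $Z_\R$, hence is reductive.

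The main obstacle I expect is the compactness of $Z_\R$, equivalently $\zg\cap\pg=0$. This cannot be read off from the pointwise values of $f$: an element $X\in\zg\cap\pg$ would give a geodesic direction commuting with $\mathfrak h$, i.e.\ a totally geodesic Kähler product $F\times F_Z$ extending $F$, whereas the normalization $f^\star\o_M=\rk M\,\o$ constrains only the $F$-factor. Compactness must instead be extracted from the global \emph{tightness} of $f$: since $f$ is tight, the theory of tight homomorphisms of Hermitian Lie groups underlying the classification in \cite{Hamlet} yields that the centralizer of the image is compact. Once this is granted, $\rhoc$ indeed takes values in the compact group $Z_\R\cap K_\R\subseteq Z_\R$, completing the proof; the remaining assertions are formal consequences of the two equivariances and the Cartan--Hadamard property.
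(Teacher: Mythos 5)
Your proposal is correct in substance and follows the same skeleton as the paper: define the twist $\rhoc(\gamma)$ as the ``ratio'' of $\rho(\gamma)$ and $\varphi(\gamma)$, observe that the two equivariances force $\rhoc(\gamma)$ to fix $N=f(\H)$ pointwise, identify the pointwise fixator of $N$ with the centralizer, and then get the commutation relation and the homomorphism property formally. Where you differ is in the two key ingredients. For the fixator lemma, the paper (Lemma \ref{lemm:fixator}) argues via normalization: ${\rm Fix}(N)$ normalizes $H_\R$ (described intrinsically through the Lie triple system $\fq_\R=T_oN$), the commutators $ghg^{-1}h^{-1}$ land in the finite center of $H_\R$, and connectedness kills them. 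Your argument is more direct: $g\in{\rm Fix}(N)$ lies in $K_\R$, Cartan--Hadamard injectivity of $\mathrm{Exp}_{p_0}$ forces $\Ad(g)$ to fix $\fp_\fh$ pointwise, and $[\fp_\fh,\fp_\fh]=\fh\cap\fk$ propagates this to all of $\fh$. Both are sound; yours identifies ${\rm Fix}(N)=Z_\R\cap K_\R$ directly, whereas the paper first proves $Z_\R\subset K_\R$ (Lemma \ref{lemm:Z}) and then gets ${\rm Fix}(N)=Z_\R$. For compactness of $Z_\R$, the paper gives an explicit proof (Lemma \ref{lemm:Z}) using Schur's lemma on the decomposition of the standard, resp.\ $27$-dimensional, representation under $H_\R$ --- which also pins down the isomorphism type of $Z_\R$, needed for the remark following Corollary C --- while you defer to the theory of tight homomorphisms. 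That deferral is legitimate: the paper itself acknowledges in Remark \ref{rema:compactness} that compactness of $Z_\R$ follows in greater generality from \cite{BIW}*{Theorem 3}; that is the reference you should cite rather than \cite{Hamlet}.

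Two caveats. First, your reductivity step contains a faulty inference as written: a Zariski-closed subgroup of a reductive group need \emph{not} be reductive (unipotent subgroups are a counterexample), so ``kernel a Zariski-closed subgroup of the reductive $Z_\R$, hence reductive'' does not stand alone. It is rescued by your own setup: the kernel lies in $Z_\R\cap K_\R$, which is compact, hence reductive, and then the unipotent radical of the Zariski closure of $\rho(\G)$ projects trivially to $H_\R$ and sits inside a compact group, hence vanishes. Second, note that in the paper's logical order reductivity is not a consequence but an \emph{input}: it comes from \cite{BIW}*{Theorem 3} (maximal $\Rightarrow$ tight $\Rightarrow$ reductive) and is needed \emph{before} Theorem A, since Corlette's theorem producing the harmonic map $f$ already requires $\rho$ reductive. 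Your structural derivation of reductivity is a valid alternative once Theorem A is granted as a black box, but it cannot replace the BIW input in the paper's actual chain of reasoning.
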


\noindent
Moreover, Lemma \ref{lemm:fixator} says that $Z_\R$ is exactly the subgroup of $G_\R$ fixing all the
points of $f(\H)$. Its isomorphism class is described in Lemma \ref{lemm:Z}.



\begin{rema}
  The assumption that $\G$ is torsion free is technical and can be removed by passing to a normal
  finite index subgroup of $\G$. Indeed, using Selberg Lemma, one can always find a torsion free
  normal finite index subgroup $\G'$ of $\G$. Theorem A and Corollaries B and C are therefore
  applicable to $\G'$. We have chosen to assume that the complexification $G$
  of $G_\R$ is simply-connected to simplify the exposition, but this assumption is not necessary:
  see Remark \ref{rema:1-connected}.  
\end{rema}

\medskip

The global strategy we
adopt here is the same as in~\cite{KM}: we consider a Higgs bundle $(\bar E,\bar\t)$ on the quotient
$X=\G\backslash\H$ associated
to a (reductive) representation $\rho:\G\pfd G_\R$ (see~\ref{subsec:Higgs}) and we translate the
Milnor-Wood inequality into an inequality involving degrees of subbundles of $\bar E$
(see~\ref{subsec:reword}). This inequality is then proved (in~\ref{subsec:proofMW}) using the
Higgs-stability properties of
$(\bar E,\bar\t)$,
or rather the {\em leafwise} Higgs-stability properties of the pull-back $(E,\t)$ of
$(\bar E,\bar\t)$ to the
projectivized tangent bundle $\PTX$ of $X$ with respect to the {\em tautological foliation} on $\PTX$
(see Subsections~\ref{subsec:HiggsFol} and~\ref{subsec:toto}).

Although classical target groups were already treated in \cite{KM}, we decided not to focus
immediately on the exceptional cases and instead to provide a
more unified perspective, as independent as possible of the
classification of the simple Hermitian
Lie groups, in the spirit of \cite{biquard}. To achieve this, instead of considering the Higgs
vector bundle associated to the standard representation of the complexification $G$ of $G_\R$
(which is only defined in the classical cases), we
work with  the Higgs
vector bundle $(E,\t)$ associated to the {\em cominuscule} representation
$\E$ of $G$ such that the dual compact symmetric space $M^\vee=G/P$ of $M$ is embedded in
the projectivization $\mathbb P\E$ of $\E$: this is sometimes called the {\em first canonical
embedding} of $M^\vee$, see \cite{nakagawa}*{p. 651}.

On the algebraic side, we present in
Section~\ref{sec:subnil} a general
construction of a graded submodule of $\E$ associated to an element of $\fu_-$ if
$\fg = \fk \oplus (\fu_+ \oplus \fu_-)$ is the Cartan decomposition of the Lie algebra of
$G$.
On the geometric side, this construction gives leafwise 
  Higgs subsheaves of $(E,\t)\pfd\PTX$ associated to the components
of the Higgs field $\t$ (see Subsection~\ref{subsec:HiggsSubsheaf}), whose existence is then used to
prove the 
Milnor-Wood inequality. To be a bit more precise, on a {\em generic} fiber of $(E,\t)\pfd\PTX$,  the
leafwise Higgs subsheaves we produce admit
  purely representation theoretic descriptions. The algebraic counterparts of the generic objects
  are first introduced and studied in 
 Section~\ref{sec:subnil}. This is then used in~\ref{subsec:HiggsSubsheaf} to define the subsheaves
 and prove that they have the desired properties.

This unified approach allows to exclude the possibility of maximal representations in {\em tube type}
target Lie groups, and in particular in ${\rm E}_{7 (-25)}$. Indeed in this case
the representation $\rho$ satisfies an inequality stronger than the Milnor-Wood inequality
(Proposition~\ref{prop:tube}). Maximal representations in ${\rm E}_{6 (-14)}$ are treated
in Subsection~\ref{subsec:E6} where we prove that they can exist only if $n=2$, in which case they are
essentially induced by a homomorphism $\SU(2,1)\pfd {\rm E}_{6 (-14)}$, see Theorem~\ref{thm:E6}.

\mpara

In \cite{BIW}, the authors introduced the notion of \emph{tight} representations.
By \cite{BI07}*{Lemma 5.3}, any maximal representation is tight, and by
\cite{BIW}*{Theorem 3}, a tight representation is {\em reductive}, meaning
that the Zariski closure of its image in $G_\R$
is a reductive subgroup of $G_\R$. This reduces the study of maximal representations
to the reductive ones.
Furthermore, by e.g. \cite{KM}*{Lemma 4.11}, any representation can be deformed to a reductive one with
the same Toledo invariant.
This also reduces the proof of the Milnor-Wood inequality to the case of reductive representations.
From now on, we therefore assume without loss of generality that
\begin{assu}\label{assu:reductive}
 The representation $\rho:\G\pfd G_\R$ is reductive.
\end{assu}

\section{Submodule of a cominuscule representation associated with a nilpotent
  element}\label{sec:subnil}

Here we develop the algebraic material that we will need in Section~\ref{sec:MW} to give a new and
unified proof of the Milnor-Wood inequality.

\medskip

Let $G_\R$ be a simple noncompact Hermitian Lie group and $K_\R$ a maximal compact subgroup of
$G_\R$. The associated irreducible Hermitian symmetric space $G_\R/K_\R$ will be denoted by the
letter $M$.

Let also
$G = G_\R \otimes \C$ and $K = K_\R \otimes \C$ be the complexifications of the real algebraic
groups $G_\R$ and $K_\R$. We assume that $G$ is simply connected.
Let $T \subset K$ be a maximal torus of $G$.  We denote by $\fg,\fk,\ft$ the corresponding complex Lie
algebras. Let $R$ be the set of roots of $G$, $\Pi$ be a basis of $R$, and $W$ the Weyl group of $R$. The center
$\fz\subset\ft$ of $\fk$ is $1$-dimensional and we let  $z 
\neq 0$ be an element in this center.  We have the Cartan  decomposition
$\fg = \fk \oplus \fu$ 
, where

$$
\fk = \ft \oplus \bigoplus_{\alpha : \scal{\alpha,z}=0} \fg_\alpha \ \ , \ \
\fu = \bigoplus_{\alpha : \scal{\alpha,z}\neq 0} \fg_\alpha.
$$

Now the adjoint action of $z$ on $\fu$ gives the complex structure of the Hermitian symmetric space $M=G_\R / K_\R$: $\ad(z)_{|\fu}$ has therefore exactly two opposite eigenvalues and, up to scaling $z$, we assume that these are $\pm 2$. The corresponding eigenspaces  $\fu_+$ and $\fu_-$ are Abelian, their root space decompositions are
$$
 \fu_+ = \bigoplus_{\alpha : \scal{\alpha,z}=2} \fg_\alpha \ \ , \ \
 \fu_- = \bigoplus_{\alpha : \scal{\alpha,z}=-2} \fg_\alpha \ .
$$


\begin{nota}
 The \emph{rank} of the symmetric space $M$, or equivalently the real rank of $G_\R$, will be
 denoted by $p$.
A root $\a$ of $\fg$ is {\em noncompact} if $\scal{\a,z}\neq 0$.   Linearly independent positive
noncompact roots $\a_1,\ldots,\a_r$ are said to be {\em strongly orthogonal} if for all $i\neq j$,
$\a_i\pm\a_j$ is not a root.
 All maximal sets of strongly orthogonal roots of $\fu_+$ have cardinality $p$ (\cite{Harish}, or
 \cite{Helgason}*{Ch. VIII, \textsection 7}). 

For a root $\a\in R$, we let $\a^\vee$ be the associated coroot and $\varpi_\a$ the
  fundamental weight corresponding to $\a$. We denote by $R^\vee=\{\a^\vee\mid \a\in R\}$ the dual
  root system. The set of simple roots $\Pi$ is a basis of $\ft^\star$,
  $\Pi^\vee=\{\b^\vee\mid\b\in\Pi\}$ is a basis of $\ft$ and $\{\varpi_\a\mid \a\in\Pi\}$ is the
  dual basis of $\Pi^\vee$. If $\a\in R$, we write $\a=\sum_{\b\in\Pi}n_\b(\a)\b$ the expression of
  $\a$ in terms of the simple roots. 

 In this paper, we use the convention that if the root system $R$ of $\fg$ is simply laced, then all
 the roots are long. Therefore short roots exist only if $R$ is not simply laced. We recall that 
  there can be at most 
 two root lengths in $R$ and that if there are two root lengths the ratio 
 $\frac{\text{long 
 root length}}{\text{short root length}}$ equals $\sqrt{2}$  (Hermitian symmetric spaces exist
only in type $A_n$, $B_n$, $C_n$, $D_n$, $E_6$ or $E_7$). 
 
\end{nota}

\subsection{The cominuscule representation and its grading}\label{subsec:minuscule-rep}\hfill
\medskip

There exists a unique simple noncompact root $\zeta\in\Pi$. It follows from the classification that
$\zeta$ is long, see Table~\ref{equa:tableau}. The root $\zeta$, or equivalently $z$, defines the
parabolic subalgebra $$\ds \fp=\fk\oplus\fu^+=\ft\oplus\bigoplus_{\a:n_\zeta(\a)\geq 0}\fg_\a$$ of
$\fg$ and hence a 
parabolic subgroup $P$ of $G$. The projective variety $M^\vee=G/P$ is a Hermitian symmetric space of
compact type called the compact dual of $M=G_\R/K_\R$. 

Let $\varpi:=\varpi_\zeta$ be the fundamental weight associated to the noncompact simple root $\zeta$ and
let $\E$ be the irreducible representation of $G$ whose highest weight is $\varpi$. Let $\E_\varpi$
be the $\varpi$-eigenspace of $\E$. Then $\E_\varpi$ is 1-dimensional and one can check that its
stabilizer in $G$ is $P$. This gives a $G$-equivariant holomorphic and isometric embedding of
$M^\vee=G/P$ in the projective space $\P\E$. It is called the {\em first canonical embedding} of
$M^\vee$. See e.g.~\cite{nakagawa} for more details.

By \cite{Murakami}, since $G$ is simply connected, the Picard group ${\rm Pic}(G/P)$ is isomorphic to
the group of characters $\cX(P)$ of $P$. Since $\fp=\fz\oplus[\fp,\fp]$, $\cX(P)$ is isomorphic to
$\Z$ and thus it is generated by the smallest positive character of $P$, namely $\varpi$. 
Moreover, the isomorphism $\iota:\cX(P) \simeq {\rm Pic}(G/P)$ is given by $\lambda \mapsto (G
\times \C_\lambda)/P$, 
where $\C_\lambda$ denotes the $1$-dimensional $P$-module defined by $\lambda$.
Since $\E_\varpi\simeq \C_{\varpi}$ as $P$-modules,
we see that $\cL:=\iota(\varpi) \simeq \cO_{\P \E}(1)_{|M^\vee}$ is a generator of ${\rm Pic}(M^\vee)$.

\medskip

\begin{defi}
A fundamental weight $\varpi_\b$, $\b\in \Pi$, is {\em minuscule} w.r.t. the root system $R$ if
$\scal{\varpi_\b,\a^\vee}\in\{-1,0,1\}$ for all $\a\in R$ (cf. e.g. \cite{bourbaki}*{Chapter VI,
  Exercise 24}). It is {\em cominuscule} if the fundamental coweight $\varpi_{\b^\vee}$ associated
to the coroot $\b^\vee\in\Pi^\vee$ is minuscule w.r.t. the dual root system $R^\vee$. 

An irreducible representation of $G$ whose highest weight is a (co)minuscule fundamental weight of
$R$ is also called (co)minuscule. 
\end{defi}

\begin{rema}
If $R$ is simply laced, then it is isomorphic to $R^\vee$ and hence the cominuscule property is
equivalent to the minuscule property. 
\end{rema}

Since the fundamental coweights $\{\varpi_{\b^\vee}\mid \b\in\Pi\}$ form by definition the dual basis
of the coroots of $\Pi^\vee$, i.e. of $\Pi$, a fundamental weight $\varpi_\b$ is cominuscule if and
only if $n_\b(\a)\in\{-1,0,1\}$ for all $\a\in R$. 
   
We conclude immediately that the weight $\varpi:=\varpi_\zeta$, and hence the $G$-representation $\E$, are
cominuscule. 
Let indeed $\a$ be a root.
As we saw, $\scal{\a,z}\in\{-2,0,2\}$ and $\zeta$ is the only simple noncompact root, so that
$\scal{\a,z}=n_\zeta(\a)\scal{\zeta,z}=2n_\zeta(\a)$. Hence the
coefficient $n_\zeta(\a)$  belongs to $\{-1,0,1\}$. Equivalently, we can observe that the coweight
$\varpi_{\zeta^\vee}$ is $\frac 12z$, which gives the result.


\lpara

We now begin our study of the cominuscule representation $\E$ of $G$.

\begin{nota}
 We denote by $\mu_0$ the lowest weight of $\E$ and by $X(\E)$ the set
 of weights of $\E$. For $\chi \in X(\E)$, we write
 $\E_\chi$ for the corresponding weight space. Recall that $\varpi$ is the highest weight of $\E$.
\end{nota}

The fact that $\E$ is cominuscule has the following consequence on the weights of $\E$:

\begin{lemm}
 \label{lemm:weight2}
 For any weight $\chi$ of $\E$, and any root $\a\in R$,  $|\scal{\chi,\alpha^\vee}| \leq 2$, and the equality
 $|\scal{\chi,\alpha^\vee}| = 2$ implies that $\alpha$ is short.
\end{lemm}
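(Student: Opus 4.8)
The plan is to deduce the estimate for an arbitrary weight from the (easy) value at the highest weight $\varpi$, using convexity of the weight polytope and the $W$-invariance of root lengths, so that no case-by-case appeal to the classification is needed. Fix a $W$-invariant inner product $(\cdot,\cdot)$ on $\ft^\star$ and identify $\alpha^\vee$ with $2\alpha/(\alpha,\alpha)$, so that $\langle\chi,\alpha^\vee\rangle=\frac{2(\chi,\alpha)}{(\alpha,\alpha)}$; everything then reduces to bounding $(\chi,\alpha)$.

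First I would record the behaviour at the top weight. For any root $\delta$, since $\{\varpi_\b\}$ is dual to $\{\b^\vee\}$ one has $\langle\varpi,\delta^\vee\rangle=n_\zeta(\delta^\vee)$, and expanding the coroot in simple coroots gives
\[
\langle\varpi,\delta^\vee\rangle = n_\zeta(\delta^\vee) = n_\zeta(\delta)\,\frac{(\zeta,\zeta)}{(\delta,\delta)}.
\]
Because $\varpi$ is cominuscule we have $n_\zeta(\delta)\in\{-1,0,1\}$, and $\zeta$ is long. Hence $\langle\varpi,\delta^\vee\rangle\in\{-1,0,1\}$ when $\delta$ is long, while for $\delta$ short the ratio $(\zeta,\zeta)/(\delta,\delta)=2$ of squared root lengths produces $\langle\varpi,\delta^\vee\rangle\in\{-2,0,2\}$. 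In particular $|\langle\varpi,\delta^\vee\rangle|\leq 1$ if $\delta$ is long and $\leq 2$ if $\delta$ is short.

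Next, the general weight. The weight set $X(\E)$ lies in the convex hull of the Weyl orbit $W\varpi$ (a standard fact for an irreducible highest-weight module). Since $x\mapsto(x,\alpha)$ is linear, its extrema over this hull are attained at the vertices $w\varpi$, and using that $W\alpha$ is symmetric (as $s_\alpha\alpha=-\alpha$) we get $|(\chi,\alpha)|\leq\max_{w\in W}|(w\varpi,\alpha)|=\max_{\gamma\in W\alpha}(\varpi,\gamma)$. Choosing $\gamma_0\in W\alpha$ realizing this maximum, $\gamma_0$ is a root with $(\gamma_0,\gamma_0)=(\alpha,\alpha)$ and $(\varpi,\gamma_0)\geq 0$, so
\[
|\langle\chi,\alpha^\vee\rangle| \;=\; \frac{2|(\chi,\alpha)|}{(\alpha,\alpha)} \;\leq\; \frac{2(\varpi,\gamma_0)}{(\gamma_0,\gamma_0)} \;=\; \langle\varpi,\gamma_0^\vee\rangle.
\]
By the previous paragraph this last quantity is $\leq 1$ when $\gamma_0$ (equivalently $\alpha$) is long and $\leq 2$ when it is short, which yields both assertions of the lemma simultaneously.

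The only genuine bookkeeping is the length computation of the second paragraph: one must carry the factor $(\zeta,\zeta)/(\delta,\delta)$ through to see that the value $2$ is reached precisely for short roots — this is exactly where the facts that $\zeta$ is long and that there are at most two root lengths with squared-length ratio $2$ are used. The convexity input and the invariance of root length under $W$ (ensuring $\gamma_0$ has the same length as $\alpha$) are standard, so I expect no serious obstacle beyond stating them carefully.
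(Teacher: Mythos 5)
Your proof is correct and takes essentially the same route as the paper's: both establish the bound at the highest weight via the identity $\langle\varpi,\alpha^\vee\rangle = n_\zeta(\alpha)\,\frac{\|\zeta\|^2}{\|\alpha\|^2}$ combined with the cominuscule property ($n_\zeta(\alpha)\in\{-1,0,1\}$) and the fact that $\zeta$ is long, and then extend to all weights using $W$-invariance of root lengths together with convexity of the weight polytope. Your write-up merely makes explicit the bookkeeping (transferring the maximum to the Weyl orbit $W\alpha$ of the root instead of the orbit $W\varpi$ of the weight) that the paper's one-line convexity argument leaves implicit.
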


\begin{proof}
  For the highest weight $\varpi$ of $\E$, the results follows from the fact that
  $\scal{\varpi,\alpha^\vee}=n_\zeta(\a)\frac{\|\zeta\|^2}{\|\a\|^2}$. The result still holds if
  $\varpi$ is replaced by $w \cdot \varpi$, where $w \in W$ is arbitrary, and since any weight of
  $\E$ is in the convex hull of $W \cdot \varpi$, it holds for any weight. 
\end{proof}

We deduce that the structure of $\E$ with respect to the action of $\fg_\alpha$ for $\alpha$ a long root is particularly simple:

\begin{lemm}
 \label{lemm:gV}
 Let $\alpha$ be a long root and let $\chi$ be a weight of $\E$.
 We have
 $$
 \fg_{-\alpha} \cdot \E_\chi = \left \{
  \begin{array}{ll}
   \E_{\chi-\alpha} \mbox{ if } \scal{\chi,\alpha^\vee} = 1\, , \\
   \{ 0 \} \mbox{ otherwise}\,.
  \end{array}
 \right .
 $$
\end{lemm}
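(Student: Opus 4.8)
The plan is to reduce the statement to the representation theory of the $\mathfrak{sl}_2$-triple attached to $\alpha$. Fix generators $e_\alpha\in\fg_\alpha$ and $e_{-\alpha}\in\fg_{-\alpha}$ with $[e_\alpha,e_{-\alpha}]=\alpha^\vee$, so that $(e_\alpha,\alpha^\vee,e_{-\alpha})$ is an $\mathfrak{sl}_2$-triple and $\fg_{-\alpha}=\C\,e_{-\alpha}$ acts as the lowering operator, decreasing the $\alpha^\vee$-eigenvalue by $2$ and the $\ft$-weight by $\alpha$. Since $\alpha$ is long, Lemma~\ref{lemm:weight2} forces $\scal{\chi,\alpha^\vee}\in\{-1,0,1\}$ for every weight $\chi$ of $\E$. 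Decomposing $\E$ into irreducible modules for this $\mathfrak{sl}_2$, each summand carries $\alpha^\vee$-eigenvalues in $\{-1,0,1\}$, so it is either the trivial module (eigenvalue $0$) or the $2$-dimensional standard module (eigenvalues $1,-1$); in particular every $\alpha$-string in $X(\E)$ has length at most $2$.

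With this in hand I would treat the three possible values of $m:=\scal{\chi,\alpha^\vee}$ separately, disposing first of the ``otherwise'' case. If $m=0$, then $\scal{\chi-\alpha,\alpha^\vee}=-2$, which Lemma~\ref{lemm:weight2} excludes for the long root $\alpha$; hence $\chi-\alpha\notin X(\E)$, so $\E_{\chi-\alpha}=\{0\}$ and a fortiori $\fg_{-\alpha}\cdot\E_\chi=\{0\}$. If $m=-1$, then $\E_\chi$ lies in the lowest $\alpha^\vee$-eigenspace of each $\mathfrak{sl}_2$-summand it meets, so $e_{-\alpha}$ annihilates it; equivalently $\scal{\chi-\alpha,\alpha^\vee}=-3$ is not a weight, so again $\fg_{-\alpha}\cdot\E_\chi=\{0\}$.

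The substantive case is $m=1$, where I must prove the equality $e_{-\alpha}\cdot\E_\chi=\E_{\chi-\alpha}$ and not merely an inclusion. Set $V_1=\bigoplus_{\scal{\psi,\alpha^\vee}=1}\E_\psi$ and $V_{-1}=\bigoplus_{\scal{\psi,\alpha^\vee}=-1}\E_\psi$; these are exactly the top and bottom weight spaces of the $2$-dimensional summands, so the standard $\mathfrak{sl}_2$-dictionary shows that $e_{-\alpha}$ restricts to a linear isomorphism from $V_1$ onto $V_{-1}$. Since $e_{-\alpha}$ shifts the $\ft$-weight by $-\alpha$, this isomorphism is weight-graded and sends $\E_\chi$ into $\E_{\chi-\alpha}$; injectivity of $e_{-\alpha}|_{\E_\chi}$ is then immediate, and surjectivity follows because any $w\in V_1$ with $e_{-\alpha}(w)\in\E_{\chi-\alpha}$ has all its weight components outside $\E_\chi$ mapped into other weight spaces, so only the $\E_\chi$-component of $w$ contributes and already realizes the given element. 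Hence $\fg_{-\alpha}\cdot\E_\chi=\E_{\chi-\alpha}$. The only genuine obstacle here is this surjectivity, i.e. ensuring that higher-dimensional weight spaces do not spoil the equality; the $\mathfrak{sl}_2$-decomposition refined by the $\ft$-grading is precisely what resolves it.
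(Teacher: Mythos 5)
Your proof is correct and follows essentially the same route as the paper: reduce to the $\mathfrak{sl}_2$-triple of $\alpha$, use Lemma~\ref{lemm:weight2} (with $\alpha$ long) to force every irreducible $\mathfrak{sl}_2(\alpha)$-summand of $\E$ to have dimension $1$ or $2$, and then settle the three cases of $\scal{\chi,\alpha^\vee}$. The only variation is in the surjectivity step for $\scal{\chi,\alpha^\vee}=1$: the paper compares dimensions via the two submodules generated by $\E_\chi$ and by $\E_{\chi-\alpha}$, while you observe that $e_{-\alpha}$ is a weight-graded isomorphism from the $+1$ onto the $-1$ eigenspace of $\alpha^\vee$, which is an equally valid (and slightly more direct) way to conclude.
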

\begin{proof}
Let $\alpha$ be long and let $\fsl_2(\alpha)$ be the Lie subalgebra of $\fg$ corresponding to $\alpha$.
Let $M \subset \E$ be the $\fsl_2(\alpha)$-submodule generated by $\E_\chi$. By Lemma \ref{lemm:weight2},
any irreducible component $V$ of $M$ is a $\fsl_2(\alpha)$-module of dimension $1$ or $2$.

We therefore have
only three possibilities. The first case is when $V=V_\chi$, $\fg_\alpha \cdot V_\chi = \{0\}$ and
$\fg_{-\alpha} \cdot V_\chi = \{0\}$. In this case, $\scal{\chi,\alpha^\vee}=0$.
The second case is when $V=V_\chi \oplus V_{\chi - \alpha}$, $\fg_\alpha \cdot V_\chi = \{0\}$ and
$\fg_{-\alpha} \cdot V_\chi = V_{\chi-\alpha}$. In this case, $\scal{\chi,\alpha^\vee}=1$ (and
$\scal{\chi-\alpha,\alpha^\vee}=-1$).
The third (symmetric) case is when $V=V_\chi \oplus V_{\chi + \alpha}$, $\fg_\alpha \cdot V_\chi = V_{\chi + \alpha}$ and
$\fg_{-\alpha} \cdot V_\chi = \{0\}$. In this case, $\scal{\chi,\alpha^\vee}=-1$ (and
$\scal{\chi+\alpha,\alpha^\vee}=1$).

If $\scal{\chi,\alpha^\vee} = 1$, we deduce that $M=M_\chi \oplus M_{\chi-\alpha}$ and that
$\fg_{-\alpha} \cdot M_\chi = M_{\chi-\alpha}$. We have $M_\chi = \E_\chi$ so $\dim(\E_\chi) \leq \dim(\E_{\chi-\alpha})$.
Arguing with the $\fsl_2(\alpha)$-submodule $M'$
generated by $\E_{\chi-\alpha}$, we see that the dimensions are equal which implies
$M_{\chi-\alpha}=\E_{\chi-\alpha}$. The lemma is proved in this case.
If $\scal{\chi,\alpha^\vee} \leq 0$, we see that $\fg_{-\alpha} \cdot \E_\chi = \{0\}$.
%
\end{proof}

\begin{nota}
  We denote by $\zmax$ the integer $\scal{\varpi,z}$. One can observe that $\zmax=2\,\frac{\dim
    M^\vee}{c_1(M^\vee)}$ (see, e.g., \cite{KMgeneraltype}*{Section 2}).
\end{nota}

\begin{prop}
 \label{prop:rang}
 The set $\{\scal{\chi,z} \ | \ \chi \in X(\E) \}$ is the set $\{\zmax,\zmax-2,\ldots,\zmax-2p\}$.
\end{prop}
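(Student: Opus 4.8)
The plan is to prove the two inclusions separately, after first observing that every value $\scal{\chi,z}$ is automatically of the form $\zmax-2k$ with $k\in\N$. Indeed, writing a weight as $\chi=\varpi-\sum_{\b\in\Pi}m_\b\,\b$ with $m_\b\in\N$, and using that $\scal{\b,z}=2\,n_\zeta(\b)=2\,\delta_{\b,\zeta}$ for a simple root $\b$, one gets $\scal{\chi,z}=\zmax-2\,m_\zeta$. So the whole content is that $m_\zeta$ ranges exactly over $\{0,1,\ldots,p\}$: that both bounds $0\le m_\zeta\le p$ are sharp and that every intermediate value occurs.

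For the attainment of every value, I would fix a maximal set $\g_1,\ldots,\g_p$ of strongly orthogonal positive noncompact roots, which are long, pick root vectors $e_{\pm\g_i}\in\fg_{\pm\g_i}$ with $\g_i^\vee=[e_{\g_i},e_{-\g_i}]$, and build the descending chain $\chi_k:=\varpi-(\g_1+\cdots+\g_k)$, $0\le k\le p$, for which $\scal{\chi_k,z}=\zmax-2k$. The key point is that $\E_{\chi_k}\neq\{0\}$ for all $k$, proved by induction using Lemma~\ref{lemm:gV}: since $\g_{k+1}$ is long and $\scal{\chi_k,\g_{k+1}^\vee}=\scal{\varpi,\g_{k+1}^\vee}-\sum_{i\le k}\scal{\g_i,\g_{k+1}^\vee}=1-0=1$ (the first term equals $n_\zeta(\g_{k+1})=1$ because $\g_{k+1}$ and $\zeta$ are both long, the others vanish by strong orthogonality), we get $\fg_{-\g_{k+1}}\cdot\E_{\chi_k}=\E_{\chi_{k+1}}\neq\{0\}$. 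This realizes all of $\zmax,\zmax-2,\ldots,\zmax-2p$.

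The reverse inclusion $m_\zeta\le p$ is the main point; I would prove it by showing that $\fu_-$ acts on $\E$ as a nilpotent algebra of index at most $p+1$. Set $X_0:=\sum_{i=1}^{p}e_{-\g_i}$; by strong orthogonality, $X_0$, $Y_0:=\sum_i e_{\g_i}$ and $H_0:=\sum_i\g_i^\vee$ form an $\fsl_2$-triple in which $X_0$ is the lowering operator. On $\E$ the eigenvalues of $H_0$ are the integers $\scal{\chi,H_0}=\sum_i\scal{\chi,\g_i^\vee}$, each summand lying in $\{-1,0,1\}$ since the $\g_i$ are long (Lemma~\ref{lemm:weight2}); hence the top eigenvalue of $H_0$ is $p$ (attained at $\varpi$) and $\fsl_2$-theory gives $X_0^{\,p+1}=0$ on $\E$. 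The locus $\{X\in\fu_-: X^{p+1}=0 \text{ on } \E\}$ is Zariski closed and $K$-stable and contains the maximal rank element $X_0$, whose $K$-orbit is dense in $\fu_-$; therefore $X^{p+1}=0$ for every $X\in\fu_-$. In characteristic zero this forces the (commutative, as $\fu_-$ is abelian) associative subalgebra of $\End(\E)$ generated by $\fu_-$ to satisfy $\fu_-^{\,p+1}=0$ on $\E$. Since $\E=\bigoplus_k\E^{(k)}$ with $\E^{(k)}=\fu_-^{\,k}\cdot\E^{(0)}$ — because $U(\fg)=U(\fu_-)U(\fk)U(\fu_+)$ and $\fu_+$ kills the top $z$-graded piece $\E^{(0)}$, which generates $\E$ — we conclude $\E^{(k)}=\{0\}$ for $k>p$, i.e. $\scal{\chi,z}\ge\zmax-2p$ for every weight.

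The step I expect to be the real obstacle is this upper bound, and inside it the chain ``$X_0^{p+1}=0$'' $\Rightarrow$ ``$X^{p+1}=0$ for all $X$'' $\Rightarrow$ ``$\fu_-^{\,p+1}=0$''. The first implication rests on the density of the maximal rank orbit $K\cdot X_0$ in $\fu_-$, i.e. on the orbit structure of the isotropy representation whose orbits are the rank strata $0,1,\ldots,p$; the second on the elementary but essential fact that, in characteristic zero, a finitely generated commutative nilpotent algebra in which every element $X$ satisfies $X^{N}=0$ is itself nilpotent of index $\le N$ (choosing a generic linear combination of a putative nonzero length-$N$ product produces an element of order $N$). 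If one prefers to avoid the density argument, the same conclusion follows from the structure theorem that every element of $\fu_-$ has rank $r\le p$ and is $K$-conjugate to $\sum_{i\le r}e_{-\g_i}$, which again yields $X^{p+1}=0$ directly.
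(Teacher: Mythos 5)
Your proof is correct, but it follows a genuinely different route from the paper's. The paper disposes of the proposition in a few lines by quoting Richardson's theorem: the $W$-orbit of $\varpi$ consists exactly of the weights $\varpi-\a_1-\cdots-\a_k$ for families $(\a_i)$ of long strongly orthogonal roots, so $\scal{\cdot\,,z}$ takes exactly the values $\zmax-2k$, $0\leq k\leq p$, on extremal weights; arbitrary weights are then handled by the convex hull of $W\cdot\varpi$ together with the parity fact that $\scal{\a,z}$ is even for every root $\a$. You replace both halves by hands-on $\fsl_2$ arguments: attainment of each value by the explicit chain $\varpi,\ \varpi-\g_1,\ \ldots,\ \varpi-\g_1-\cdots-\g_p$ down a strongly orthogonal family, driven by Lemma~\ref{lemm:gV}; and the bound $\scal{\chi,z}\geq\zmax-2p$ by proving that any product of $p+1$ elements of $\fu_-$ annihilates $\E$ (the triple $(Y_0,H_0,X_0)$ plus Lemma~\ref{lemm:weight2} give $X_0^{p+1}=0$; the $K$-orbit structure of $\fu_-$ propagates this to every $X\in\fu_-$; polarization in characteristic zero upgrades it to the vanishing of all length-$(p+1)$ products; and irreducibility plus PBW show the top $z$-eigenspace generates $\E$ under $U(\fu_-)$, so the grading is exhausted after $p$ steps). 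What each approach buys: the paper's is shorter but black-boxes Richardson; yours is self-contained modulo exactly the two classical inputs the paper invokes elsewhere anyway (Harish-Chandra's strongly orthogonal roots, and the $K$-orbit classification of $\fu_-$ cited in Subsection~\ref{subsec:DOS}), it needs no convexity argument, and it effectively pre-proves statements the paper only establishes later by the same technique (Proposition~\ref{prop:rang-y} and Proposition~\ref{prop:Ei}(c)), so several of your steps could be replaced by cross-references. Your classification-free variant (every element of $\fu_-$ is $K$-conjugate to some $\sum_{i\leq r}e_{-\g_i}$ with $r\leq p$) is preferable to the density argument, since it avoids having to identify the open orbit. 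One wording should be fixed: ``a maximal set of strongly orthogonal positive noncompact roots, which are long'' must mean a maximum-cardinality (Harish-Chandra) family, not an inclusion-maximal one --- in non-simply-laced types an inclusion-maximal strongly orthogonal set can consist of short roots and have cardinality less than $p$ (in $(B_n,\varpi_1)$ the singleton consisting of the short noncompact root is inclusion-maximal), so long-ness and cardinality $p$ are properties of the particular family you construct, which is exactly the standard fact the paper cites.
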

\begin{proof}
 It follows from \cite{richardson}*{Theorem 2.1} that the $W$-orbit of the weight $\varpi$ is
 exactly the set of weights of the form
 $\varpi - \sum_{i=1}^k \a_i$, where $(\a_i)_{1 \leq i \leq k}$ is a family of long strongly orthogonal roots.
 For any $i$, we have $\scal{\a_i,z}=2$, thus we have the equality of sets
 $$\{\scal{\mu,z} \, | \, \mu \in W \cdot \varpi \} = \{ \zmax, \zmax-2 , \ldots, \zmax-2p  \}\ .$$
 In particular, $\scal{\mu_0,z} = \zmax-2p$ and for $\chi \in X(\E)$, we have
 $\zmax-2p \leq \scal{\chi,z} \leq \zmax$. The result of the proposition now follows from the fact that
 $2$ is a divisor of $\scal{\a,z}$ for any root $\a$.
\end{proof}

\lpara

Now we can introduce the grading of $\E$:

\begin{defi}
 \label{defi:Ei}
 For a relative integer $i$, let $\displaystyle \E_i := \bigoplus_{\chi : \scal{\chi,z} = \zmax-2i} \E_\chi $.
\end{defi}

This grading corresponds to the decomposition of $\E$ into irreducible $K$-modules:

\begin{prop}
 \label{prop:Ei-irr}
 The $K$-modules $\E_i$ are irreducible.
\end{prop}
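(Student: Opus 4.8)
The plan is to reduce the irreducibility of all the $\E_i$ at once to a single count of $\fk$-highest weight vectors. Since $z$ lies in the centre $\fz$ of $\fk$, the subalgebra $\fk$ commutes with the grading operator and hence preserves each $\E_i$; as $K$ is reductive, every $\E_i$ is a completely reducible $\fk$-module. Write $\fn_{\fk}^{+}=\bigoplus_{\beta>0,\ \scal{\beta,z}=0}\fg_\beta$ for the sum of the positive compact root spaces. The number of irreducible constituents of $\E_i$ equals $\dim \E_i^{\fn_{\fk}^{+}}$, the dimension of the space of $\fk$-highest weight vectors it contains, and by Proposition~\ref{prop:rang} there are exactly $p+1$ nonzero graded pieces $\E_0,\dots,\E_p$, so that
$$
\dim\E^{\fn_{\fk}^{+}}=\sum_{i=0}^p\dim\E_i^{\fn_{\fk}^{+}}\geq p+1 ,
$$
each summand being at least $1$. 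It therefore suffices to prove the reverse inequality $\dim\E^{\fn_{\fk}^{+}}\leq p+1$: this forces every summand to equal $1$, i.e. each $\E_i$ to be irreducible.

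Before attacking the bound I would record the structure that makes the count plausible. Fix a generator $v_\varpi$ of the line $\E_\varpi$. Since $\fu_+\cdot v_\varpi=0$ and $\fu_-$ is abelian, $\E=U(\fg)\,v_\varpi=S(\fu_-)\,v_\varpi$, and because $\scal{\varpi,\beta^\vee}=0$ for every compact root $\beta$ the vector $v_\varpi$ generates the trivial $\fk_{ss}$-module; grading by $z$ then gives $\E_i=S^i(\fu_-)\,v_\varpi$. In particular $\E_0=\C v_\varpi$ is irreducible, and the $K$-equivariant map $\fu_-\to\E_1$, $Y\mapsto Y\cdot v_\varpi$, is onto and injective (each $\fg_{-\alpha}\cdot v_\varpi\neq\{0\}$, because $\scal{\varpi,\alpha^\vee}>0$ for a positive noncompact root $\alpha$), so $\E_1\cong\fu_-$, which is irreducible since $M$ is an irreducible Hermitian symmetric space. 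For general $i$ the candidate highest weight is $\mu_i=\varpi-(\gamma_1+\cdots+\gamma_i)$, where $\gamma_1,\dots,\gamma_p$ is a maximal family of strongly orthogonal long noncompact positive roots, suitably ordered so that each partial sum $\mu_i$ is $\fk$-dominant; by Richardson's description of $W\cdot\varpi$ used in the proof of Proposition~\ref{prop:rang}, $\mu_i$ is a weight of $\E_i$ lying in $W\cdot\varpi$, hence of multiplicity one, and applying $\fg_{-\gamma_1},\dots,\fg_{-\gamma_i}$ successively to $v_\varpi$ produces a nonzero $\fk$-highest weight vector $w_i\in\E_i$ of that weight.

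The hard part is the upper bound $\dim\E^{\fn_{\fk}^{+}}\leq p+1$, equivalently the statement that each graded piece carries a single $\fk$-highest weight line, namely $\C\,w_i$. My plan is to show that any $\fk$-highest weight vector $v\in\E_i$ has weight exactly $\mu_i$. First, writing its weight as $\chi=\varpi-\sum_{\alpha\in\Pi}c_\alpha\,\alpha$ and pairing with $z$ gives $c_\zeta=i$, so the grading is read off from the $\zeta$-coefficient. The remaining task is to pin down the compact part of $\chi$: using that $\chi$ is $\fk$-dominant together with the cominuscule bounds of Lemma~\ref{lemm:weight2} and the control of the long-root strings in $\E$ provided by Lemma~\ref{lemm:gV}, one analyses the $\fsl_2$-strings through $\chi$ to rule out the existence of any $\fk$-dominant weight other than $\mu_i$ supporting a vector annihilated by $\fn_{\fk}^{+}$. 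This weight combinatorics is the main obstacle, and it is precisely where cominusculeness is indispensable. Alternatively, one may invoke the classical decomposition of $S(\fu_-)$ as a $\fk$-module: under the surjection $S(\fu_-)\twoheadrightarrow\E$ the constituents that survive in $\E_i=S^i(\fu_-)\,v_\varpi$ are exactly those indexed by the $\{0,1\}$-valued partition $(1^i,0^{p-i})$, which gives once more $\E_i=V_{\fk}(\mu_i)$ and hence the irreducibility of every graded piece.
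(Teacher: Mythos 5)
Your overall strategy (count $\fk$-highest weight vectors: $p+1$ nonzero graded pieces give at least $p+1$ constituents, then show there are at most $p+1$) is sound as a plan, and your treatment of $\E_0$ and $\E_1$ is correct. But the proof has a genuine gap exactly where you say it does: the upper bound $\dim\E^{\fn_\fk^+}\leq p+1$, i.e. the claim that every $\fk$-highest weight vector in $\E_i$ has weight $\mu_i=\varpi-(\gamma_1+\cdots+\gamma_i)$, is never established. You describe it as ``the main obstacle'' and gesture at an analysis of $\fsl_2$-strings via Lemmas~\ref{lemm:weight2} and~\ref{lemm:gV}, but no such analysis is carried out, and it is far from routine: a priori a $\fk$-dominant weight $\chi=\varpi-\sum_\alpha c_\alpha\alpha$ with $c_\zeta=i$ could support a singular vector without being extremal, and nothing you write excludes this. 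So the proposal as it stands is a reduction of the Proposition to an unproved combinatorial statement, not a proof. Note that the paper avoids this issue entirely by a case-by-case argument: in types $A$, $B$, $D$, $E_6$, $E_7$ the pieces $\E_i$ are identified explicitly in known models of $\E$, and in type $C_n$ a geometric argument shows $M^\vee\cap\P\E_i$ spans $\P\E_i$ and is a single $K$-orbit.

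Your alternative route, via the decomposition of $S(\fu_-)$ as a $\fk$-module (Hua--Kostant--Schmid), is in fact the more promising one and would give a genuinely classification-free proof, unlike the paper's; but as written it too asserts the crux without justification, namely that in the quotient $S^i(\fu_-)\cdot v_\varpi=\E_i$ only the constituent indexed by the partition $(1^i,0^{p-i})$ survives. This can be fixed in a few lines: Schmid's theorem gives the multiplicity-free decomposition of $S^i(\fu_-)\otimes\C_\varpi$ into irreducibles with highest weights $\varpi-\sum_j\lambda_j\gamma_j$, $|\lambda|=i$; if some $\lambda_j\geq 2$ then $\scal{\varpi-\sum_k\lambda_k\gamma_k,\gamma_j^\vee}=1-2\lambda_j\leq -3$ (the $\gamma_k$ being long and strongly orthogonal), so by Lemma~\ref{lemm:weight2} this is not a weight of $\E$; hence the highest weight vector of that constituent maps to $0$ and the whole constituent dies. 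Only the constituent with $\lambda=(1^i)$ can survive, and since $\E_i\neq 0$ it does, so $\E_i$ is irreducible. With that paragraph added, your second argument would be complete and arguably preferable to the paper's, since it treats all types (including $C_n$) uniformly; without it, neither of your two routes closes.
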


\begin{proof}
This might be well-known to experts, but we include a proof for completeness.
We give a case by case argument.
In all types except in type $C_n$, the representation $\E$ is well enough understood, so that we
get readily the result. In fact, in type $A_{n-1}$, we have $\E = \wedge^p (\C^p \oplus \C^{n-p})$
and thus $\E_i = \wedge^{p-i} \C^p \otimes \wedge^i \C^{n-p}$:  this is an irreducible ${\rm S}(\GL_p
\times \GL_{n-p})$-module. In type $B_n$, we have $\E = \C^{2n+1} = \C \oplus \C^{2n-1} \oplus \C$,
and each summand is an irreducible ${\rm Spin}_{2n-1}$-module, hence an irreducible $K$-module.
In type $(D_n,\varpi_1)$ the situation is similar.

In type $(D_n,\varpi_n)$, $\E$ is the spinor 
representation of the spin group and, according to \cite{chevalley}, we have $\E  = \wedge^0 \C^n
\oplus \wedge^2 \C^n \oplus \cdots \oplus \wedge^{2p} \C^n$ (where $p=[n/2]$). Thus $\E_i =
\wedge^{2i} \C^n$, and this is an irreducible $\GL_n$-module. For the types $E_6$ and $E_7$, we use
models of these exceptional Lie algebras and their minuscule representations, as given for example 
in \cite{manivel}. In type $E_6$, we have $\E = \C \oplus V_{16} \oplus V_{10}$, where $V_{16}$ is
a spinor representation and $V_{10}$ the vector representation of the spin group ${\rm Spin_{10}}$. In
type $E_7$, we have $\E = \C \oplus V_{27} \oplus V'_{27} \oplus \C$, where $V_{27}$ and $V'_{27}$
are the two minuscule representations of a group of type $E_6$. In both cases, the $K$-modules
$\E_i$ are irreducible. 

We now deal with the case of type $C_n$.
We denote by $\C^{2n} = \C^n_a \oplus \C^n_b$ a symplectic $2n$-dimensional space, with $\C^n_a$ and
$\C^n_b$ supplementary isotropic subspaces. We then have $\E=\left ( \wedge^n \C^{2n} \right
)_\omega$, where the symbol $\omega$ means that we take in $\wedge^n \C^{2n}$ the irreducible
${\rm Sp}_{2n}$-submodule containing the highest weight line $\wedge^n \C^n_a$.

 We first claim that for any $i$, the variety $M^\vee \cap \p \E_i$ generates $\p \E_i$ as a
 projective space.
 To prove this claim, we set $\bF_i \subset \E_i$ to be the space generated by the affine cone over
 $M^\vee \cap \E_i$. We denote by $\b_1,\ldots,\b_n$ the base of the root system.
 We consider $\bF = \bigoplus_i \bF_i$. Then $\bF$ is obviously $K$-stable. Moreover, if $x \in
 M^\vee \cap \p \E_i$, then applying Lemma \ref{lemm:weight2}, since $\b_n$ is long, the
 $\SL_2(\b_n)$-orbit of $x$ is $\{x\}$ itself or a line $\scal{x,y}$ joining $x$ and a point $y$ in
 $\E_{i-1}$ or $\E_{i+1}$. In either case, $y$ belongs to $M^\vee$, and this implies that $\bF$ is
 $\SL_2(\b_n)$-stable. 

 This implies that $\bF$ is $\Sp_{2n}$-stable, so $\bF = \E$, and $\bF_i=\E_i$. To prove that
 $\E_i=\bF_i$ is an irreducible 
 $K$-module, it is enough to show that $M^\vee \cap \p \E_i$ is a single $K$-orbit.
 Note that the previous part of the argument would be valid
 for any cominuscule representation, but we now give a specific argument in the case of $\Sp_{2n}$ to
 show that 
 $M^\vee \cap \p \E_i$ is a single $K$-orbit. In fact, $\E_i$ is a submodule of
 $\wedge^{n-i} \C^n_a \otimes \wedge^i \C^n_b$, and $M^\vee \cap \p \E_i$ represents the set of
 Lagrangian subspaces of 
 $\C^{2n}$ which meet $\C^n_a$ in dimension $n-i$ and $\C^n_b$ in dimension $i$. Since
 $\C^n_b$ is the dual space to $\C^n_a$, via the symplectic form $\omega$, such a space $\Lambda$ is
 equal to the direct sum of $\Lambda \cap \C^n_a$ and $(\Lambda \cap \C^n_a)^\bot$ (with $(\Lambda \cap 
 \C^n_a)^\bot \subset \C^n_b$). 
 We deduce that
 $M^\vee \cap \p \E_i$ is isomorphic to the Grassmannian of $(n-i)$-subspaces in $\C^n_a$, and thus
 it is a single $\GL_n$-orbit.
\end{proof}

\begin{prop}
 \label{prop:Ei}
 We have the following properties:
 \begin{enumerate}[(a)]
  \item $\E = \E_0 \oplus \E_{1} \oplus \cdots \oplus \E_{p}$.
  \item $\E_0 = \E_{\varpi}$.
  \item $\E_{i+1} = \fu_- \cdot \E_i$.
  \item The map $\E_0 \otimes \fu_- \to \E_1$ is an isomorphism.
 \end{enumerate}
\end{prop}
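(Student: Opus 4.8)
The plan is to prove the four items in the stated order, leaning on Proposition~\ref{prop:rang}, Proposition~\ref{prop:Ei-irr} and Definition~\ref{defi:Ei}. Item~(a) should be immediate: since $\E=\bigoplus_{\chi\in X(\E)}\E_\chi$ and, by Proposition~\ref{prop:rang}, the value $\scal{\chi,z}$ ranges exactly over $\{\zmax,\zmax-2,\ldots,\zmax-2p\}$, grouping the weight spaces according to the value of $\scal{\chi,z}$ and invoking Definition~\ref{defi:Ei} gives the decomposition. For item~(b) I would first note that $\E_\varpi\subseteq\E_0$ (as $\scal{\varpi,z}=\zmax$), and that $\E_0$ is, by Proposition~\ref{prop:Ei-irr}, an irreducible $K$-module. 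Its highest weight is $\varpi$, because the compact raising operators $\fg_\a$ ($n_\zeta(\a)=0$) kill the highest weight vector $v_\varpi$. Since $\varpi=\varpi_\zeta$ pairs to zero with every compact simple coroot $\b^\vee$ ($\b\in\Pi\setminus\{\zeta\}$), the highest weight $\varpi$ is trivial on the semisimple part $[\fk,\fk]$ of $\fk$; hence the irreducible $K$-module $\E_0$ is one-dimensional and equals $\E_\varpi$.

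For item~(c), the inclusion $\fu_-\cdot\E_i\subseteq\E_{i+1}$ is a one-line grading computation: for $X\in\fu_-$ and $v\in\E_\chi$ one has $z\cdot(X\cdot v)=[z,X]\cdot v+X\cdot(z\cdot v)=(\scal{\chi,z}-2)\,X\cdot v$, since $\ad(z)$ acts on $\fu_-$ by $-2$. For the reverse inclusion I would set $W_0:=\E_0$, $W_{i+1}:=\fu_-\cdot W_i$ and $W:=\sum_i W_i$, so that $W_i\subseteq\E_i$ by the previous computation, and then show that $W$ is a $\fg$-submodule of $\E$. Stability under $\fu_-$ is built in; stability under $\fk$ follows by induction from $[\fk,\fu_-]\subseteq\fu_-$ together with the $\fk$-stability of $\E_0$; and stability under $\fu_+$ follows by induction from $[\fu_+,\fu_-]\subseteq\fk$ and the vanishing $\fu_+\cdot\E_0=0$ (as $\varpi$ is the highest weight). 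Since $W$ contains $v_\varpi$ and $\E$ is irreducible, $W=\E$, and comparing graded pieces yields $W_i=\E_i$, that is $\E_{i+1}=\fu_-\cdot\E_i$.

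Finally, for item~(d), surjectivity of $\E_0\otimes\fu_-\to\E_1$ is precisely item~(c) with $i=0$. As $\E_0=\E_\varpi=\C\,v_\varpi$ is one-dimensional by~(b), the map is identified with $\fu_-\to\E_1$, $X\mapsto X\cdot v_\varpi$, and only injectivity remains. Writing $X=\sum_\a c_\a X_{-\a}$ over the positive noncompact roots $\a$ (for which $n_\zeta(\a)=1$), the vector $v_\varpi$ is a highest weight vector for $\fsl_2(\a)$ of $\fsl_2$-weight $\scal{\varpi,\a^\vee}=\frac{\|\zeta\|^2}{\|\a\|^2}\geq 1$, so $X_{-\a}\cdot v_\varpi\neq 0$ and lies in $\E_{\varpi-\a}$; as the weights $\varpi-\a$ are pairwise distinct, these vectors are linearly independent, forcing all $c_\a=0$. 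Hence the map is injective, and being surjective it is an isomorphism.

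The step I expect to be the main obstacle is the reverse inclusion in~(c): one must check that the span $W=\sum_i\fu_-^{\,i}\cdot\E_0$ is closed under the whole Lie algebra, and the delicate point is $\fu_+$-stability, which has to be propagated inductively through the grading using $[\fu_+,\fu_-]\subseteq\fk$ and the $\fk$-stability established along the way. Everything else reduces to the grading by $\ad(z)$, the one-dimensionality of $\E_0$ from~(b), and standard $\fsl_2$-theory applied to $v_\varpi$.
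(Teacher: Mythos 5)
Your proof is correct, and for the two nontrivial items it takes a route that is genuinely more self-contained than the paper's. Items (a) and (b) are treated by the paper as needing no proof; your argument for (b) via Proposition~\ref{prop:Ei-irr} is legitimate (that proposition precedes this one and its case-by-case proof does not use it, so there is no circularity), though heavier than necessary. For (c), the paper simply cites the fact that $\E=U(\fu_-)\cdot\E_\varpi$ (a consequence of the PBW decomposition $U(\fg)=U(\fu_-)\,U(\fp)$ and the $\fp$-stability of the line $\E_\varpi$) and combines it with $\scal{\a,z}=-2$ for roots $\a$ of $\fu_-$; your inductive submodule argument --- showing $W=\sum_i\fu_-^{\,i}\cdot\E_0$ is stable under $\fk$ and $\fu_+$ using $[\fk,\fu_-]\subseteq\fu_-$, $[\fu_+,\fu_-]\subseteq\fk$ and $\fu_+\cdot\E_\varpi=0$, then invoking irreducibility of $\E$ --- is in effect a hands-on proof of that cited generation statement, so the underlying idea is the same but you supply the proof rather than quote the fact. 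For (d) the two proofs genuinely diverge: the paper observes that the surjection $\E_0\otimes\fu_-\to\E_1$ coming from (c) is a morphism of $\fk$-modules and concludes by Schur's lemma, using that both $\fu_-$ and $\E_1$ are irreducible $\fk$-modules (the latter is Proposition~\ref{prop:Ei-irr}, the former a standard fact about irreducible Hermitian symmetric spaces); you instead prove injectivity directly, by $\fsl_2(\a)$-theory ($\scal{\varpi,\a^\vee}=\|\zeta\|^2/\|\a\|^2\geq 1$ because $n_\zeta(\a)=1$ and $\zeta$ is long, hence $X_{-\a}\cdot v_\varpi\neq 0$) together with linear independence of nonzero vectors lying in the pairwise distinct weight spaces $\E_{\varpi-\a}$. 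Your version buys independence from both irreducibility inputs, at the cost of a longer explicit computation; the paper's buys brevity by leaning on representation-theoretic standard facts.
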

\begin{proof}
 Only the last two points need a proof. Let $U(\fu_-)$ denote the envelopping algebra of $\fu_-$. The third point
 follows from the fact that $\E = U(\fu_-) \cdot \E_{\varpi}$ and the fact that for $\alpha$ a root of $\fu_-$,
 we have $\scal{\alpha,z} = -2$. 
 The last point follows by Schur's lemma since $\fu_-$ and $\E_1$ are irreducible $\fk$-modules.
\end{proof}

\subsection{Rank of a nilpotent element, dominant orthogonal sequences}
\label{subsec:DOS}\hfill\medskip

We now consider an element $y \in \fu_-$ and we describe a particularly nice representative of its
orbit under $K$.

\medskip

The $K$-orbits in $\fu_-$
are parametrized by integers $r \in \{0,\ldots,p\}$ and
a representative of each orbit is $y_{\alpha_1} + \cdots + y_{\alpha_r}$, where the roots $\alpha_i$
are strongly orthogonal and long, and $y_{\alpha_i}$ is a fixed element in the root space of $\a_i$ (see
e.g. \cite{Harish}, \cite{Helgason}*{Ch. VIII, \textsection 7}, or \cite{Wolf}). 
If $y\in\fu_-$ is in the orbit corresponding to the integer $r$,
$r$ is called the {\em rank} of $y$.


\begin{rema}
 In case $\fg$ has type $A$, $\fu_-$ identifies with a space of matrices, and
 the rank as defined above of an element in $\fu_-$ coincides with its rank as a matrix.
\end{rema}

The following proposition is a characterization of the rank $r$ of $y\in\fu_-$ in terms
of its action on $\E$:

\begin{prop}
 \label{prop:rang-y}
 Let $y = y_{\alpha_1} + \cdots + y_{\alpha_r}$ with $\a_1,\ldots,\a_r$ a family of strongly orthogonal long roots,
 and let $\varphi(y):\E \to \E$ be the corresponding linear map.
 We have $\varphi(y)^{r+1}=0$ and $\varphi(y)^r(\E_{0}) = \E_{\varpi-\alpha_1-\cdots-\alpha_r}$, so
 in particular $\varphi(y)^r(\E_{0}) \neq \{ 0 \}$.
\end{prop}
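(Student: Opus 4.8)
The plan is to exploit the strong orthogonality of the $\a_i$ together with the fact that they are long, via Lemmas~\ref{lemm:weight2} and~\ref{lemm:gV}. Write $x_i:=\varphi(y_{\a_i})$ for the action on $\E$ of the root vector $y_{\a_i}\in\fg_{-\a_i}\subset\fu_-$ (the $\a_i$ being positive noncompact long roots, so $\scal{\a_i,z}=2$), so that $\varphi(y)=x_1+\cdots+x_r$. First I would record two elementary facts. Since $\a_i\pm\a_j$ is not a root for $i\neq j$, the bracket $[y_{\a_i},y_{\a_j}]$ lies in $\fg_{-\a_i-\a_j}=\{0\}$, so the operators $x_i$ pairwise commute. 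Second, each $x_i$ squares to zero on $\E$: indeed $\a_i$ is long, so by Lemma~\ref{lemm:gV} one has $x_i\cdot\E_\chi=\E_{\chi-\a_i}$ when $\scal{\chi,\a_i^\vee}=1$ and $x_i\cdot\E_\chi=\{0\}$ otherwise; since $\scal{\chi-\a_i,\a_i^\vee}=\scal{\chi,\a_i^\vee}-2$, a nonzero first application of $x_i$ forces the second to vanish, whence $x_i^2=0$.

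These two facts immediately give the nilpotency statement. Expanding $\varphi(y)^{r+1}=(x_1+\cdots+x_r)^{r+1}$ and using commutativity to collect factors, every monomial has the form $\prod_i x_i^{k_i}$ with $\sum_i k_i=r+1$; as there are only $r$ indices, some $k_i\geq 2$ and the monomial vanishes because $x_i^2=0$. Hence $\varphi(y)^{r+1}=0$. The same bookkeeping applied to $\varphi(y)^r$ shows that the only surviving monomials are those with every $k_i=1$, so that
$$
\varphi(y)^r = r!\,x_1 x_2\cdots x_r .
$$
It therefore remains to identify the image of $\E_0$ under the composite $x_1\cdots x_r$.

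For this last step I would apply the $x_i$ to $\E_0=\E_\varpi$ one at a time, tracking weights with Lemma~\ref{lemm:gV}; the point is that the relevant pairing with a coroot stays equal to $1$ at every stage. Since $\a_i$ is long and noncompact with $n_\zeta(\a_i)=1$, the formula $\scal{\varpi,\a^\vee}=n_\zeta(\a)\frac{\|\zeta\|^2}{\|\a\|^2}$ from the proof of Lemma~\ref{lemm:weight2} gives $\scal{\varpi,\a_i^\vee}=1$ for each $i$. Moreover, strong orthogonality forces $\scal{\a_i,\a_j^\vee}=0$ for $i\neq j$, since a nonzero value would make $\a_i-\a_j$ or $\a_i+\a_j$ a root. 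Consequently, for distinct indices $i_1,\dots,i_k$ and any further index $i_{k+1}$,
$$
\scal{\varpi-\a_{i_1}-\cdots-\a_{i_k},\ \a_{i_{k+1}}^\vee}=1-\sum_{l=1}^{k}\scal{\a_{i_l},\a_{i_{k+1}}^\vee}=1 .
$$
By Lemma~\ref{lemm:gV}, $x_{i_{k+1}}$ therefore maps $\E_{\varpi-\a_{i_1}-\cdots-\a_{i_k}}$ \emph{onto} $\E_{\varpi-\a_{i_1}-\cdots-\a_{i_{k+1}}}$. Starting from the highest weight line $\E_0=\E_\varpi$ and iterating $r$ times, we conclude that $x_1\cdots x_r$ maps $\E_0$ onto $\E_{\varpi-\a_1-\cdots-\a_r}$, which is in particular nonzero. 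Combining this with the displayed formula for $\varphi(y)^r$ yields $\varphi(y)^r(\E_0)=\E_{\varpi-\a_1-\cdots-\a_r}$, as required.

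The only genuinely delicate point, which I would double-check, is the passage from an inclusion to the equality $x_i\cdot\E_\chi=\E_{\chi-\a_i}$ of weight spaces when $\scal{\chi,\a_i^\vee}=1$: this is exactly the content of Lemma~\ref{lemm:gV} for long roots (whose proof also gives $\dim\E_\chi=\dim\E_{\chi-\a_i}$, so that the map is in fact an isomorphism). It is this surjectivity at each step, rather than a mere nonvanishing, that guarantees the final image is the \emph{whole} target weight space and not a proper subspace.
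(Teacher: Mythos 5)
Your proof is correct and follows essentially the same route as the paper's: pairwise commutativity plus $\varphi(y_{\a_i})^2=0$ (via Lemmas~\ref{lemm:weight2} and~\ref{lemm:gV}) gives the multinomial expansion $\varphi(y)^r=r!\,\varphi(y_{\a_1})\circ\cdots\circ\varphi(y_{\a_r})$ and $\varphi(y)^{r+1}=0$, and then Lemma~\ref{lemm:gV} identifies the image of $\E_\varpi$. The only difference is that you spell out the weight bookkeeping ($\scal{\varpi,\a_i^\vee}=1$ and $\scal{\a_i,\a_j^\vee}=0$, so the pairing stays equal to $1$ at every stage) that the paper leaves implicit when it invokes Lemma~\ref{lemm:gV}, which is a worthwhile clarification but not a different argument.
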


\begin{proof}
 We have $y_{\alpha_i} \in \fg_{-\alpha_i}$. For any pair $(\chi,\alpha)$ with $\chi \in X(\E)$ and
 $\a$ a long root,
 $\chi-2\alpha$ cannot be a weight of $\E$ by Lemma \ref{lemm:weight2} and its proof. We deduce that
 $\varphi(y_{\alpha_i})^2=0$. On the other hand, for any $i,j$ with $i \neq j$,
 the maps $\varphi(y_{\alpha_i})$ and $\varphi(y_{\alpha_j})$ commute since $\alpha_i + \alpha_j$ is not
 a root. Thus we get
 $$
 \varphi(y)^k = k!\ \sum \varphi(y_{\alpha_{j_1}}) \circ \cdots \circ \varphi(y_{\alpha_{j_k}}) \ ,
 $$
 where the sum is over the increasing sequences $1\leq j_1 < \cdots < j_k \leq r$.

 In particular $\varphi(y)^{r+1} = 0$, and
 $\varphi(y)^r = r!\ \varphi(y_{\alpha_1}) \circ \cdots \circ \varphi(y_{\alpha_r})$.
 By Lemma \ref{lemm:gV}, we have
 $\varphi(y_{\alpha_1}) \circ \cdots \circ \varphi(y_{\alpha_r}) \cdot \E_{\varpi} = \E_{\varpi-\alpha_1-\cdots-\alpha_r}$,
 so the proposition is proved.
\end{proof}


\medskip

In \cite{kostant}, Kostant introduced his so-called ``chain cascade'' of orthogonal roots.
Here we will need a version of his algorithm where we impose that all the roots of the chain cascade
have a positive coefficient on $\zeta$. Note that a similar algorithm is used in \cite{buch}.

\begin{defi}\label{def:cascade}
We define an integer $q$ and, for any integer $i$
such that $1 \leq i \leq q$,
a root $\alpha_i$ together with a subset $\Pi_i \subset \Pi$, by the following inductive process:
\begin{itemize}
 \item We let $\Pi_1=\Pi$.
 \item Assuming that $\alpha_1,\ldots,\alpha_{i-1}$ and $\Pi_1,\ldots,\Pi_i$ have been defined,
 we let $\alpha_i$ be the highest root of the root system $R(\Pi_i)$ generated by $\Pi_i$.
 \item We let $\Sigma_i \subset \Pi_i$ be the set of simple roots $\beta$ such that
 \begin{equation}
  \label{equa:sigma_i}
  \scal{\alpha_i^\vee,\beta} \neq 0\, .
 \end{equation}
 \item If $\zeta \in \Sigma_i$, then $q=i$ and the algorithm terminates. Otherwise, $\Pi_{i+1}$
 is the connected component of $\Pi_i \setminus \Sigma_i$ containing $\zeta$. 
\end{itemize}
If $(\alpha_i)_{1 \leq i \leq q}$ is the sequence defined by this process,
we say that it is the {\em maximal dominant orthogonal sequence} for $\varpi$. More generally,
  the sequences $(\alpha_i)_{1 \leq i \leq r}$ for $1\leq
  r\leq q$ are called the {\em dominant orthogonal sequences}.
\end{defi}

The following proposition essentially adapts the results of \cite{kostant} to our context and
explains our terminology.

\begin{prop}
 \label{prop:cascade}
 Let $(\alpha_i)_{1 \leq i \leq q}$ be the maximal dominant orthogonal sequence for $\varpi$. Then
 \begin{enumerate}
  \item The roots $\alpha_i$ are long and strongly orthogonal.
  \item $q=p$.
  \item For any integer $i \leq p$, $\alpha_1^\vee + \cdots + \alpha_i^\vee$ is a dominant coweight.
  \item $\varpi - \alpha_1 - \cdots - \alpha_p$ is the lowest weight $\mu_0$ of the irreducible
    $G$-module $\E$.
 \end{enumerate}
\end{prop}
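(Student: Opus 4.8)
The plan is to follow Kostant's cascade analysis, keeping track at every stage of the coefficient $n_\zeta$. I first record the basic features of the inductive construction. By definition each $\Pi_i$ is a connected subset of the Dynkin diagram containing $\zeta$, so $R(\Pi_i)$ is irreducible and its highest root $\alpha_i$ is supported on all of $\Pi_i$; in particular $n_\zeta(\alpha_i)\geq 1$, and cominuscularity ($n_\zeta\in\{-1,0,1\}$) forces $n_\zeta(\alpha_i)=1$. Thus each $\alpha_i$ is a positive noncompact root, i.e. a root of $\fu_+$. For the longness in (1) I would argue that $\alpha_i$, being the highest root of $R(\Pi_i)$, is a long root of that subsystem, and since $R(\Pi_i)$ contains $\zeta$ -- which is long in $R$ -- it cannot consist only of short roots of $R$; hence $\alpha_i$ is long in $R$ as well.

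For the strong orthogonality in (1), note that $\Pi_{i+1}\subseteq\Pi_i\setminus\Sigma_i$ consists precisely of the simple roots orthogonal to $\alpha_i$, so every root of $R(\Pi_{i+1})$ -- in particular each $\alpha_j$ with $j>i$ -- is orthogonal to $\alpha_i$. This orthogonality upgrades to strong orthogonality by maximality of the highest root: since $\scal{\alpha_i,\alpha_j^\vee}=0$ the $\alpha_j$-string through $\alpha_i$ is symmetric, so if $\alpha_i\pm\alpha_j$ were a root then $\alpha_i+\alpha_j$ would be a root of $R(\Pi_i)$ lying strictly above the highest root $\alpha_i$, a contradiction. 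This proves (1). I then organise the remaining points around the weights $\chi_i:=\varpi-\alpha_1-\cdots-\alpha_i$. Since $\scal{\varpi,\alpha_k^\vee}=n_\zeta(\alpha_k)\,\|\zeta\|^2/\|\alpha_k\|^2=1$ and the $\alpha_k$ are strongly orthogonal, the reflections $s_{\alpha_k}$ commute and $\chi_i=s_{\alpha_1}\cdots s_{\alpha_i}(\varpi)\in W\varpi$; in particular every $\chi_i$ is a weight of $\E$, and $\{\alpha_1,\dots,\alpha_q\}$ is a family of long strongly orthogonal roots of $\fu_+$, whence $q\leq p$.

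The heart of the matter is the dominance statement (3), which I plan to prove by induction on $|\Pi|$. The tail $(\alpha_2,\dots,\alpha_q)$ is exactly the maximal dominant orthogonal sequence of $R(\Pi_2)$ for the same node $\zeta$, so by induction $\scal{\beta,\alpha_2^\vee+\cdots+\alpha_i^\vee}\geq 0$ for all $\beta\in\Pi_2$; adding $\scal{\beta,\alpha_1^\vee}\geq 0$ (valid since $\alpha_1$ is the highest root of $R$) disposes of every $\beta\in\Pi_2$. For $\beta\in\Pi\setminus\Pi_2$ one uses that $\scal{\beta,\alpha_k^\vee}\leq 0$ for $k\geq 2$, that it vanishes unless $\beta$ is adjacent to $\Pi_2$, and that $|\scal{\beta,\alpha_k^\vee}|\leq 1$ because $\alpha_k$ is long; when $\beta\notin\Sigma_1$ all these terms vanish, so the only case requiring work is $\beta\in\Sigma_1$, where $\scal{\beta,\alpha_1^\vee}=1$. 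In that case $\beta$ has, since the Dynkin diagram is a tree, a unique neighbour $v\in\Pi_2$, and $\scal{\beta,\alpha_k^\vee}=-1$ exactly for those $k\geq 2$ with $v\in\Pi_k$, all other terms being zero; dominance therefore comes down to the claim that $v\in\Pi_k$ for at most one index $k\geq 2$, i.e. that the boundary node $v$ of the surviving component $\Pi_2$ is removed from the cascade at the very next step (either because $v\in\Sigma_2$ or because $v$ is disconnected from $\zeta$ in $\Pi_2\setminus\Sigma_2$). I expect this to be the main obstacle: it is a statement about how the highest root of $R(\Pi_2)$ attaches to the boundary of $\Pi_2$, which I would establish either by a uniform argument on the attachment nodes of highest roots or, failing that, by checking it over the finitely many cominuscule pairs $(R,\zeta)$, in the spirit of the case analysis used for Proposition~\ref{prop:Ei-irr}.

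Granting (3), statements (4) and (2) follow cleanly, which is why I would prove (3) first. Indeed $\chi_q$ is anti-dominant: for $\beta\neq\zeta$ one has $\scal{\chi_q,\beta^\vee}=-\scal{\alpha_1+\cdots+\alpha_q,\beta^\vee}$, and since all $\alpha_k$ are long the factor $\|\alpha_k\|^2/\|\beta\|^2$ relating the two pairings is the same positive number for every $k$, so this equals a positive multiple of $-\scal{\beta,\alpha_1^\vee+\cdots+\alpha_q^\vee}\leq 0$ by (3); while for $\beta=\zeta$ one has $\scal{\chi_q,\zeta^\vee}=1-\sum_k\scal{\alpha_k,\zeta^\vee}$, and since $\zeta\in\Pi_k$ for every $k$ all these terms are $\geq 0$, the last one satisfying $\scal{\alpha_q,\zeta^\vee}\geq 1$ precisely by the termination condition $\zeta\in\Sigma_q$. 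An anti-dominant element of $W\varpi$ is necessarily the lowest weight, so $\chi_q=\mu_0$, which is (4). Finally, since $\chi_q=\mu_0$, Proposition~\ref{prop:rang} gives $\zmax-2q=\scal{\mu_0,z}=\zmax-2p$, hence $q=p$, which is (2).
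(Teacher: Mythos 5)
Your proposal is correct in its overall architecture and, except at one point, runs parallel to the paper's own proof. Part (1) is argued the same way (the paper gets strong orthogonality from ``two orthogonal long roots are strongly orthogonal''; your root-string argument is an equally valid variant), and your proof of (4) is the paper's. For (2) you take a genuinely different route: the paper proves directly that no positive root is orthogonal to all the $\alpha_i$, so that the sequence is a maximal family of strongly orthogonal roots of $\fu_+$ and hence has cardinality $p$; you instead deduce $q=p$ from (4) together with $\scal{\mu_0,z}=\zmax-2p$, which is established in (the proof of) Proposition~\ref{prop:rang} and involves no circularity, since that proposition is independent of this one. Your route is leaner but makes (2) depend on (3)--(4); the paper's is self-contained.

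The genuine gap is the step you flag yourself in (3): the claim that the neighbour $v\in\Pi_2$ of a node $\beta\in\Sigma_1$ does not survive into $\Pi_3$, equivalently that $\scal{\beta,\alpha_k^\vee}\neq 0$ for at most one $k\geq 2$. You are right that this is the crux, and right that it cannot come from the inductive construction alone: for the nested chain $E_8\supset E_7\supset D_6$ produced by the same highest-root recursion, the node $\alpha_8\in\Sigma_1$ pairs $-1$ with both $\theta_{E_7}$ and $\theta_{D_6}$, so cominuscularity must enter. (The paper's own write-up is thin at exactly this spot: its third case asserts $\scal{\alpha_i^\vee,\beta}=0$ for $\beta\in\Sigma_j$, $j<i-1$, ``by construction of $\Pi_i$''.) However, neither your case-by-case fallback nor any appeal to the classification is needed, because the gap has a two-line uniform fix. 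Suppose $\scal{\beta,\alpha_k^\vee}<0$ and $\scal{\beta,\alpha_l^\vee}<0$ for some $2\leq k<l\leq q$; both pairings equal $-1$ because $\alpha_k,\alpha_l$ are long and $\beta\neq\pm\alpha_k,\pm\alpha_l$. Then $\alpha_k+\beta$ is a root, and since $\scal{\alpha_k+\beta,\alpha_l^\vee}=\scal{\alpha_k,\alpha_l^\vee}+\scal{\beta,\alpha_l^\vee}=0-1<0$ by the orthogonality already proved in (1), $\alpha_k+\beta+\alpha_l$ is also a root. But $\beta\neq\zeta$ (otherwise the algorithm stops at step $1$ and there is nothing to prove), so $n_\zeta(\alpha_k+\beta+\alpha_l)=1+0+1=2$, contradicting the cominuscularity of $\varpi$. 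This proves your claim, and with it inserted your induction on $|\Pi|$ closes. In fact this lemma gives (3) directly, without any induction: for $\beta\in\Sigma_j$ the pairings with $\alpha_1^\vee,\dots,\alpha_{j-1}^\vee$ vanish by construction, the pairing with $\alpha_j^\vee$ is $\geq 1$, and at most one of the later pairings can equal $-1$.
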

\begin{proof}
The root $\alpha_i$ is the highest root of $R(\Pi_i)$, and $\Pi_i$ contains the long root $\zeta$, so $\alpha_i$
is long. By construction, $\scal{\alpha_i^\vee,\alpha_j}=0$ if $j>i$, so $\alpha_i$ and $\alpha_j$ are
orthogonal. Since they are both long, they are strongly orthogonal. Kostant \cite[Lemma 1.6]{kostant} also proved the strong
orthogonality. This proves (1).

For (2), let us prove that $(\alpha_1,\ldots,\alpha_q)$ is a maximal sequence of orthogonal roots
(see also \cite[Theorem 1.8]{kostant}). Let
$\alpha \in R$ be such that $\scal{\alpha_i^\vee,\alpha}=0$ holds for all $i$, and let us assume that
$\alpha >0$. Let $i$ be the greatest integer such that $\alpha \in R(\Pi_i)$. By maximality of $i$ there exists
a simple root $\beta$ in $Supp(\alpha) \cap \Sigma_i$. Since $\alpha_i$ is dominant on $R(\Pi_i)$, we have
$\scal{\alpha_i^\vee,\alpha} \geq \scal{\alpha_i^\vee,\beta} > 0$, a contradiction to the existence of
$\alpha$.

For the third point, let $i \leq p$ and let $\beta \in \Pi$.
\begin{itemize}
 \item If $\beta \in \Pi_i$, by construction, $\scal{\alpha_j^\vee,\beta}=0$ for $j<i$. Since $\alpha_i$ is the
 highest root of $\Pi_i$, $\scal{\alpha_i^\vee,\beta} \geq 0$. Thus
 $\scal{\suma{i},\beta} = \scal{\av_i,\beta} \geq 0$.
 \item If $\beta \in \Sigma_{i-1}$, then $\scal{\av_{i-1},\beta} \geq 1$ and $\scal{\av_j,\beta}=0$ for $j<i+1$.
 Since $\alpha_i$ is long, $\scal{\av_i,\beta} \geq -1$. Thus, $\scal{\suma{i},\beta} \geq 0$.
 \item If $\beta \in \Sigma_j$ for $j<i-1$, then, by construction of $\Pi_i$,
 $\scal{\av_i,\beta}=0$. By induction on $i$,
 $\scal{\suma{i-1},\beta} \geq 0$, so $\scal{\suma{i},\beta} \geq 0$.
\end{itemize}
 
For (4), we observe that $s_{\alpha_i}(\varpi)=\varpi-\alpha_i$ since $\alpha_i$ is long, by Lemma \ref{lemm:weight2}.
Thus $s_{\alpha_1} \cdots s_{\alpha_p}(\varpi)=\varpi-\alpha_1 - \cdots - \alpha_p$ is a weight of $\E$.
We prove that it is a lowest weight. Let $\beta \in \Pi$. If $\beta \neq \zeta$, then
$\scal{\varpi-\alpha_1 - \cdots - \alpha_p,\beta^\vee} = \scal{-\alpha_1 - \cdots - \alpha_p,\beta^\vee}$, and
this is non-positive by (3) since all the roots $\alpha_i$ have the same length. If $\beta=\zeta$, then
we compute that $\scal{\varpi-\alpha_1 - \cdots - \alpha_p,\zeta^\vee} = 1 - \scal{\alpha_p,\zeta^\vee} \leq 0$
since $\zeta \in \Sigma_p$.
\end{proof}

For the convenience of the reader and later use, we recall in Table~\ref{equa:tableau} below
what we obtain applying this
recursive construction in all cases. We don't indicate
all the roots $\alpha_i$, but rather the sum of the corresponding coroots
$\alpha_1^\vee + \cdots + \alpha_r^\vee$, by indicating in the shape of
a Dynkin diagram the values
$\scal{\alpha_1^\vee + \cdots + \alpha_r^\vee,\beta}$ for all simple roots $\beta$:
in other words, $\alpha_1^\vee + \cdots + \alpha_r^\vee$ is expressed as an integer
combination of fundamental coweights.
In the last column, we express the smallest root $\theta$ such that
$\scal{\alpha_1^\vee + \cdots + \alpha_r^\vee,\theta}=2$.

\begin{table}[ht]
$$
\begin{array}{|c|c|c|c|}
 \hline
 \multirow{2}{*}{$(G,\varpi)$} & \multirow{2}{*}{Condition} &
 \multirow{2}{*}{$\alpha_1^\vee+\ldots+\alpha_r^\vee$} & \multirow{2}{*}{$\theta$} \\
 &&&\\
 \hline
 (A_{p+q-1},\varpi_{p-1}) & \begin{array}{c}r<p \\ \mbox{ or } r<q \end{array} & 0 \cdots 0 1 0 \cdots 0 1 0 \cdots 0 &
 0 \cdots 0 1 1 \cdots 1 1 0 \cdots 0 \\
 \hline
 \multirow{2}{*}{$(A_{p+q-1},\varpi_{p-1})$} & \multirow{2}{*}{$r=p=q$} & \multirow{2}{*}{$0 \cdots 0 2 0 \cdots 0$} &
 \multirow{2}{*}{$0 \cdots 0 1 0 \cdots 0$} \\
&&& \\
 \hline
 \multirow{2}{*}{$(B_n,\varpi_1)$} & \multirow{2}{*}{$r=1$} & \multirow{2}{*}{$0 1 0 \cdots 0$} & \multirow{2}{*}{$1 2 \cdots 2 2$} \\
 &&& \\
 \hline
 \multirow{2}{*}{$(B_n,\varpi_1)$} & \multirow{2}{*}{$r=2$} & \multirow{2}{*}{$2 0 0 \cdots 0$} & \multirow{2}{*}{$1 0 \cdots 0 0$} \\
 &&& \\
 \hline
 \multirow{2}{*}{$(C_n,\varpi_n)$} & \multirow{2}{*}{$r < n$} & \multirow{2}{*}{$0 \cdots 0 1 0 \cdots 0 0$} & \multirow{2}{*}{$0 \cdots 0 2 2\cdots 2 1$} \\
 &&& \\
 \hline
 \multirow{2}{*}{$(C_n,\varpi_n)$} & \multirow{2}{*}{$r = n$} & \multirow{2}{*}{$0 \cdots 0 2$} & \multirow{2}{*}{$0 \cdots 0 1$} \\
 &&& \\
 \hline
 (D_n,\varpi_1) & r = 1 & 010 \cdots 0\cdots 0 \begin{array}{c}0\\ 0\end{array} &
 122 \cdots 2\cdots 2 \begin{array}{c}1\\ 1\end{array} \\
 \hline
 (D_n,\varpi_1) & r = 2 & 200 \cdots 0\cdots 0 \begin{array}{c}0\\ 0\end{array} &
 100 \cdots 0\cdots 0 \begin{array}{c}0\\ 0\end{array}\\
 \hline
 (D_n,\varpi_n) & 2r \leq n-2 & 0 \cdots 010\cdots 0 \begin{array}{c}0\\ 0\end{array} &
 0 \cdots 122\cdots 2 \begin{array}{c}1\\ 1\end{array} \\
 \hline
 (D_n,\varpi_n) & 2r = n-1 & 0 \cdots 000\cdots 0 \begin{array}{c}1\\ 1\end{array} &
 0 \cdots 000\cdots 1 \begin{array}{c}1\\ 1\end{array} \\
 \hline
 (D_n,\varpi_n) & 2r = n & 0 \cdots 000\cdots 0 \begin{array}{c}2\\ 0\end{array} &
 0 \cdots 000\cdots 0 \begin{array}{c}1\\ 0\end{array} \\
 \hline
 (E_6,\varpi_1) & r = 1 & \poidsesixnu 000001 &\poidsesixnu 123212 \\
 \hline
 (E_6,\varpi_1) & r = 2 & \poidsesixnu 100010 & \poidsesixnu 111110 \\
 \hline
 (E_7,\varpi_7) & r = 1 & \poidseseptnu 1000000 & \poidseseptnu 2343212 \\
 \hline
 (E_7,\varpi_7) & r = 2 & \poidseseptnu 0000100 & \poidseseptnu 0122211 \\
 \hline
 (E_7,\varpi_7) & r = 3 & \poidseseptnu 0000020 & \poidseseptnu 0000010\\
 \hline
\end{array}
$$
\caption{Dominant orthogonal sequences}
\label{equa:tableau}
\end{table}

\begin{rema}
 \label{rema:tube-type} {\em Tube type cominuscule modules.} The symmetric space $G_\R /
 K_\R$ has tube type for $(A_{p+q-1},\varpi_p)$ 
 when $p=q$, for $(B_n,\varpi_1)$, for $(C_n,\varpi_n)$, for $(D_n,\varpi_n)$ when $n$ is even, for
 $(D_n,\varpi_1)$, and for $(E_7,\varpi_7)$. Glancing at Table~\ref{equa:tableau}, one can readily
 check that $G_\R / K_\R$ has tube type if and only if $z=\alpha_1^\vee + \cdots
 + \alpha_p^\vee$. 

 By abuse of notation, we will say that the $G$-module $\E$ itself has tube type if the corresponding symmetric
 space $G_\R / K_\R$  has tube type. 

 If $\E$ has tube type it follows that $\scal{\suma{p},\beta} = 2 \delta_{\beta,\zeta}$ because by
 definition  $z$ is equal to $2\varpi_{\zeta^\vee}$. Since $\zeta$ and all the roots $\alpha_i$ are long, we
 have $\scal{\av_i,\zeta} = \scal{\alpha_i,\zeta^\vee}$, so $\alpha_1 + \cdots + \alpha_p = 2 \varpi$.
 We get $\varpi- (\alpha_1 + \cdots + \alpha_p) = -\varpi$ and this is the lowest weight
 $\mu_0$ by Proposition \ref{prop:cascade}(4). This implies that 
 $\scal{\mu_0,z}=-\scal{\varpi,z}$, so $\zmax = p$ and $\E_p=\E_{\mu_0}$. Since the lowest weight is the
 opposite of the highest weight, the tube type representation $\E$ is autodual: $\E \simeq \E^\vee$. Moreover,
 for any weight $\chi$, we have an isomorphism of $T$-modules $\E_{-\chi} \simeq
 \E_\chi^\vee$. Since the characters of $K$ inject into the characters of $T$, we get in particular
 that $\E_p=\E_{-\varpi}\simeq \E_{\varpi}^\vee=\E_0^\vee$ as $K$-modules.
\end{rema}

We make the following observations:

\begin{prop}
 \label{prop:h}
Let $(\a_1,\ldots,\a_r)$ be a dominant orthogonal sequence and $h=\alpha_1^\vee + \cdots +
\alpha_r^\vee$. Then:
 \begin{itemize}
  \item $h$ is dominant: for any positive root $\alpha$, we have $\scal{\alpha,h} \geq 0$.
  \item For any root $\alpha$, we have $\scal{\alpha,h} \leq 2$.
  \item If a root $\alpha$ satisfies $\scal{\alpha,h} = 2$, then $\scal{\alpha,z}=2$.
  \item We have $\scal{\varpi,h}=r$.
 \end{itemize}
\end{prop}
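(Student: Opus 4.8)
The four assertions are of different natures, and I would handle them separately, reusing the combinatorics of the cascade set up in Proposition~\ref{prop:cascade}. The first point, dominance of $h=\suma{r}$, is literally Proposition~\ref{prop:cascade}(3), so there is nothing to prove. For the last point I would reduce $\scal{\varpi,h}=\sum_{i=1}^r\scal{\varpi,\av_i}$ to showing $\scal{\varpi,\av_i}=1$ for each $i$. Since $\alpha_i$ is long by Proposition~\ref{prop:cascade}(1) and $\zeta$ is long, the formula $\scal{\varpi,\av_i}=n_\zeta(\alpha_i)\frac{\|\zeta\|^2}{\|\alpha_i\|^2}$ used in the proof of Lemma~\ref{lemm:weight2} reduces this to $n_\zeta(\alpha_i)=1$; and $n_\zeta(\alpha_i)=1$ because $\alpha_i$ is the highest root of the irreducible subsystem $R(\Pi_i)$ containing $\zeta$, hence has full positive support, while cominuscularity of $\E$ forces every coefficient $n_\zeta$ to lie in $\{-1,0,1\}$.

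For the upper bound $\scal{\alpha,h}\le 2$ I would exploit that, by Definition~\ref{def:cascade}, $\alpha_1$ is the highest root $\theta_{\max}$ of $R=R(\Pi_1)$. For any root $\alpha$ the difference $\alpha_1-\alpha$ is a nonnegative integral combination of simple roots, so dominance of $h$ (the first point) gives $\scal{\alpha,h}\le\scal{\alpha_1,h}$. Strong orthogonality of the $\alpha_i$ (Proposition~\ref{prop:cascade}(1)) makes $\scal{\alpha_1,\av_i}=0$ for $i\ge 2$, whence $\scal{\alpha_1,h}=\scal{\alpha_1,\av_1}=2$. This settles the second point uniformly, for positive and negative roots at once.

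The third point is the one that needs an idea rather than a reduction, and I expect it to be the main obstacle. The naive attempt, to deduce $\scal{\alpha,z}=2$ from $\scal{\zeta,h}>0$ together with a support analysis of $\theta_{\max}-\alpha$, fails: $\scal{\zeta,h}$ can vanish already for an interior cominuscule node in type $A$. Instead I would argue directly with the integers $c_i:=\scal{\alpha,\av_i}$, whose sum equals $2$. Since each $\alpha_i$ is long one has $c_i\le 2$, with $c_i=2$ forcing $\alpha=\alpha_i$ and hence $\scal{\alpha,z}=\scal{\alpha_i,z}=2$ at once. Otherwise all $c_i\le 1$, so $\sum_i c_i=2$ produces two indices $k\ne l$ with $c_k=c_l=1$. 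Because $\alpha_k\perp\alpha_l$, successive reflections show first that $\alpha-\alpha_k$ and then that $\alpha-\alpha_k-\alpha_l$ are roots. The decisive input is now cominuscularity: $n_\zeta$ of the root $\alpha-\alpha_k-\alpha_l$ equals $n_\zeta(\alpha)-2$ and must lie in $\{-1,0,1\}$, which forces $n_\zeta(\alpha)\ge 1$; combined with the cominuscule bound $n_\zeta(\alpha)\le 1$ this gives $n_\zeta(\alpha)=1$, i.e. $\scal{\alpha,z}=2$. The heart of the matter is thus the interplay between strong orthogonality, which manufactures the subtracted root, and cominuscularity, which bounds $n_\zeta$, rather than any case-by-case inspection of Table~\ref{equa:tableau}.
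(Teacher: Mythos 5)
Your proof is correct, and it takes a genuinely different route from the paper's: where the paper verifies the key facts case by case from Table~\ref{equa:tableau}, your argument is uniform and classification-free. For the second point the reduction is the same in both proofs (dominance of $h$ bounds $\scal{\alpha,h}$ by $\scal{\Theta,h}$ for the highest root $\Theta$), but the paper reads the identity $\scal{\Theta,h}=2$ off the table, while you note that $\Theta=\alpha_1$ by the very construction of the cascade, whence $\scal{\Theta,h}=\scal{\alpha_1,\av_1}=2$ by strong orthogonality. For the third point the paper exhibits, again from the table, the smallest root $\theta$ with $\scal{\theta,h}=2$ and checks that $n_\zeta(\theta)=1$; you instead argue intrinsically: either some $\scal{\alpha,\av_i}=2$, which forces $\alpha=\alpha_i$ because $\alpha_i$ is long, or two of the integers $c_i=\scal{\alpha,\av_i}$ equal $1$, and the double reflection produces the root $\alpha-\alpha_k-\alpha_l$, so that cominuscularity ($n_\zeta\in\{-1,0,1\}$ on all roots) pins down $n_\zeta(\alpha)=1$. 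For the fourth point the paper goes through the lowest weight: $\mu_0=\varpi-\alpha_1-\cdots-\alpha_p$ by Proposition~\ref{prop:cascade}(4), $\scal{\varpi-\mu_0,h}=2r$ by orthogonality, and $\scal{\mu_0,h}=-\scal{\varpi,h}$ by $\fsl_2$-representation theory; you compute directly $\scal{\varpi,\av_i}=n_\zeta(\alpha_i)=1$, using that the highest root of the irreducible subsystem $R(\Pi_i)\ni\zeta$ has full support while cominuscularity caps the coefficient at $1$. I checked the elementary facts your argument leans on and they all hold: $|\scal{\alpha,\av_i}|\le 1$ unless $\alpha=\pm\alpha_i$, since $\alpha_i$ is long; two indices $k\neq l$ with $c_k=c_l=1$ must exist when all $c_i\le 1$ and they sum to $2$; and $\alpha-\alpha_k-\alpha_l\neq 0$ because strong orthogonality prevents $\alpha_k+\alpha_l$ from being a root. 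What your route buys is exactly what the paper advertises as its guiding principle (arguments as independent of the classification as possible), and it makes the mechanism visible: strong orthogonality manufactures the subtracted roots, cominuscularity bounds $n_\zeta$. What the paper's route buys is brevity: Table~\ref{equa:tableau} is compiled anyway for later use (e.g.\ in Remark~\ref{rema:tube-type} and around Definition~\ref{defi:qlh}), so once it is in place the two verifications are immediate.
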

\begin{proof}
 Recall the table \ref{equa:tableau}. The first point has been proved in Proposition \ref{prop:cascade}(3).
 Let $\Theta$ be the highest root of the root system of $G$,
 which can be found for example in \cite{bourbaki}.
 Since $h$ is dominant, the second item follows from the
 fact that $\scal{\Theta,h}=2$ in all cases.
 For the third item, we have indicated in the last column of the array the smallest root $\theta$ such that
 $\scal{\theta,h}=2$. It is thus enough to check that $\scal{\theta,z}=2$, or equivalently that $\theta$
 has coefficient $1$ on the simple root corresponding to $\varpi$. This is again readily checked.

 To prove that $\scal{\varpi,h}=r$, recall that if $(\a_1,\ldots,\a_p)$ is the maximal dominant
 orthogonal sequence, the weight
 $\varpi- \sum_1^p \alpha_i$ is the lowest weight $\mu_0$ by Proposition \ref{prop:cascade}(4). Thus,
 $\scal{\varpi-\mu_0,h} = \scal{\alpha_1+\cdots+\alpha_p,\alpha_1^\vee+\cdots+\alpha_r^\vee}=2r$,
 since $\scal{\alpha_i,\alpha_j^\vee}=2\delta_{i,j}$ by orthogonality of the roots $\alpha_i$.
 Moreover, $h$ is part of some $\fsl_2$-triple $(x,h,y)$, so
 $\scal{\mu_0,h}=-\scal{\varpi,h}$, by the representation theory of $\fsl_2$.
 This proves that $\scal{\varpi,h}=r$.
\end{proof}

\begin{rema}
 The four points of the proposition fail in most cases if we perform the algorithm starting with a
 weight which is not cominuscule.
\end{rema}

\subsection{The submodule associated with a nilpotent element}
\label{subsec:higgs-submodule}\hfill\medskip

We explain now that an element $y\in\fu_-$ defines a graded subspace $\V$ in $\E$.
From now on we assume that $y=y_{\a_1}+ \cdots + y_{\a_r}$, where $(\a_1,\ldots,\a_r)$ is the dominant
orthogonal sequence obtained by the algorithm of Definition~\ref{def:cascade} and  we denote by $h$ the element
$\alpha_1^\vee + \cdots + \alpha_r^\vee$ explicited in Table \ref{equa:tableau}.

\medskip

We begin with the following consequence of Proposition \ref{prop:Ei}:

\begin{coro}
 \label{coro:chi-Ei}
 For $\chi \in X(\E_i)$, we have $\scal{\chi,h} \geq r - 2i$.
\end{coro}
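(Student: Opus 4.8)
The plan is to argue by induction on $i$, using only the structural description of the graded pieces $\E_i$ supplied by Proposition \ref{prop:Ei} together with the two pairing bounds for $h$ from Proposition \ref{prop:h} (namely $\scal{\varpi,h}=r$ and $\scal{\alpha,h}\le 2$ for every root $\alpha$). For the base case $i=0$, I would invoke Proposition \ref{prop:Ei}(b), which gives $\E_0=\E_\varpi$, so that the only weight of $\E_0$ is the highest weight $\varpi$; Proposition \ref{prop:h} then yields $\scal{\varpi,h}=r$, which is exactly the asserted bound $r-2\cdot 0$ (with equality).

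For the inductive step, I would use Proposition \ref{prop:Ei}(c), which states $\E_{i+1}=\fu_-\cdot\E_i$. The root spaces of $\fu_-$ are the $\fg_{-\gamma}$ with $\gamma$ a positive noncompact root (those with $\scal{\gamma,z}=2$), and $\fg_{-\gamma}\cdot\E_\chi\subseteq\E_{\chi-\gamma}$. Hence every nonzero weight space of $\E_{i+1}$ is of the form $\E_{\chi-\gamma}$ for some $\chi\in X(\E_i)$ and some positive noncompact root $\gamma$. Writing an arbitrary weight $\chi'$ of $\E_{i+1}$ as $\chi'=\chi-\gamma$ with $\chi\in X(\E_i)$, the induction hypothesis combined with $\scal{\gamma,h}\le 2$ gives
$$
\scal{\chi',h}=\scal{\chi,h}-\scal{\gamma,h}\ge (r-2i)-2=r-2(i+1),
$$
which closes the induction.

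As for difficulty, the corollary itself is a short bookkeeping argument; all the substantial work lives in the propositions it invokes, in particular the properties of $h=\suma{r}$. The one place that needs genuine care is the reduction in the inductive step: I must verify that passing from $\E_i$ to $\E_{i+1}$ modifies a weight by subtracting \emph{a single} root of $\fu_+$, rather than an arbitrary element of the root lattice, so that the uniform bound $\scal{\gamma,h}\le 2$ is applied exactly once per step. (A direct, non-inductive attempt — writing $\chi=\varpi-\sum_\beta c_\beta\beta$ and bounding $\scal{\varpi-\chi,h}$ termwise — fails, since $z-h$ is not dominant on the compact simple roots; this is precisely why the passage through Proposition \ref{prop:Ei}(c) is the right route.) I do not expect any obstacle beyond this observation.
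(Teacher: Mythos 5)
Your proof is correct and is essentially the paper's own argument: the paper's one-line proof cites Proposition \ref{prop:Ei} and the bound $\scal{\alpha,h}\le 2$ from Proposition \ref{prop:h}, which unpacks to exactly your induction ($\E_0=\E_\varpi$ with $\scal{\varpi,h}=r$, then $\E_{i+1}=\fu_-\cdot\E_i$ losing at most $2$ per step). Your explicit verification that each weight of $\E_{i+1}$ differs from a weight of $\E_i$ by a \emph{single} noncompact root is precisely the detail the paper leaves implicit.
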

\begin{proof}
 This follows from Proposition \ref{prop:Ei} and the fact that for any root $\alpha$,
 we have $\scal{\alpha,h} \leq 2$ (Proposition \ref{prop:h}).
\end{proof}

We define the subspace $\V\subset\E$ associated to $y$ by the equality condition in the
inequality of this last corollary: 

\begin{defi}
\label{defi:V}
 Let $\V \subset \E$ be defined by $\V = \oplus \V_i$ with
 $$\V_i = \bigoplus_{\chi \in X(\E_i) : \scal{\chi,h}=r-2i} \E_\chi \ .$$
\end{defi}
Observe that the subspaces $\V_i$ are non trivial exactly when $i \in \{0,\ldots,r\}$, since by
Proposition \ref{prop:h}, we have
$\scal{\chi,h} \in \{-r,-r+1,\ldots,r\}$ for $\chi \in X(\E)$.

\def\bbF{\mathbb F}

The elements $y$ and $h$ fit in a $\fsl_2$-triple $(x,h,y)$ for some $x\in\fu_+$. Define a descending filtration
$(\bbF_k)_{r\geq k\geq -r}$ of
$\E$ by
$$
\bbF_k:=\bigoplus_{\chi \in X(\E) : \scal{\chi,h}\leq k}\E_\chi.
$$
The centralizer of $y$ stabilizes each subspace $\bbF_k$, and since any two $\fsl_2$-triples
are congruent under this centralizer \cite{mcgovern}*{Theorem 3.8}, this filtration only depends on $y$, and not on
$h$: see also the argument given for the canonical parabolic subalgebra at the end of
\cite{mcgovern}*{Paragraph 3.2}. Using the representation theory of $\fsl_2$, we can see that $\bF_k$ depends only
on $y$, since it can be described as follows:
\begin{equation}
\label{equa:filtration-y}
\bbF_k=\sum_{\begin{array}{c}
       \ell \geq 0 \\
       k+\ell+1 \geq 0
      \end{array}}
\Ker y^{k+\ell+1} \cap \Im y^\ell
\end{equation}

Moreover, for all $k\geq 0$,
$$
y^k:\bbF_k/\bbF_{k-1}\stackrel{\sim}{\longrightarrow}\bbF_{-k}/\bbF_{-k-1}
$$
(where $\bbF_{-r-1}:=\{0\}$).

It is plain from Corollary~\ref{coro:chi-Ei} that the $\V_i$'s can alternatively be defined by
$\V_i=\E_i\cap\bbF_{r-2i}$, for all $i=0,1,\ldots,r$.

This implies the following equality on dimensions:

\begin{lemm}
 \label{lemm:dim-Vi}
 We have $\dim \V_{r-i} = \dim \V_i$.
\end{lemm}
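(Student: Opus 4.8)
The plan is to exploit the $\fsl_2$-triple $(x,h,y)$ underlying the definition of $\V$, together with the extra compatibility coming from the grading $\E=\bigoplus_i\E_i$. Write $\E^{[m]}:=\bigoplus_{\chi:\,\scal{\chi,h}=m}\E_\chi$ for the $m$-eigenspace of $h$ on $\E$; since $h\in\ft$ acts on the weight space $\E_\chi$ by the scalar $\scal{\chi,h}$, this is literally an eigenspace, canonically identified as a vector space with $\bbF_m/\bbF_{m-1}$. Using Corollary~\ref{coro:chi-Ei}, which forces $\scal{\chi,h}\ge r-2i$ on $X(\E_i)$, the definition of $\V_i$ rewrites as $\V_i=\E_i\cap\bbF_{r-2i}=\E_i\cap\E^{[r-2i]}$, exactly the reformulation already noted just before the lemma.

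First I would record the $\fsl_2$ input: for every integer $m\ge 0$ the operator $y^m$ induces an isomorphism $y^m:\E^{[m]}\stackrel{\sim}{\longrightarrow}\E^{[-m]}$. This is precisely the isomorphism $y^k:\bbF_k/\bbF_{k-1}\stackrel{\sim}{\longrightarrow}\bbF_{-k}/\bbF_{-k-1}$ displayed above (with $k=m$), transported to the eigenspace picture.

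Next I would refine this isomorphism using the $z$-grading. The key point is that $y$ shifts the grading deterministically by one step: since every root of $\fu_-$ pairs with $z$ to $-2$, one has $y\cdot\E_j\subseteq\E_{j+1}$ for all $j$, whence $y^m\cdot(\E_j\cap\E^{[m]})\subseteq\E_{j+m}\cap\E^{[-m]}$. Thus, with respect to the decompositions $\E^{[m]}=\bigoplus_j(\E_j\cap\E^{[m]})$ and $\E^{[-m]}=\bigoplus_j(\E_j\cap\E^{[-m]})$, the isomorphism $y^m$ carries the $j$-th summand into the $(j+m)$-th. Restricted to each summand it is injective, so $\dim(\E_j\cap\E^{[m]})\le\dim(\E_{j+m}\cap\E^{[-m]})$ for all $j$; summing over $j$ gives $\dim\E^{[m]}$ on the left and $\dim\E^{[-m]}$ on the right, and these are equal because $y^m$ is a global isomorphism. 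Hence every inequality is an equality:
$$\dim(\E_j\cap\E^{[m]})=\dim(\E_{j+m}\cap\E^{[-m]}).$$

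Finally I would specialize. For $i$ with $2i\le r$, set $m=r-2i\ge 0$ and $j=i$; the displayed equality reads
$$\dim\V_i=\dim(\E_i\cap\E^{[r-2i]})=\dim(\E_{r-i}\cap\E^{[2i-r]})=\dim\V_{r-i},$$
the last step because $\V_{r-i}=\E_{r-i}\cap\E^{[\,r-2(r-i)\,]}=\E_{r-i}\cap\E^{[2i-r]}$. The remaining case $2i>r$ follows by applying this to $r-i$ in place of $i$. The main point to get right is the refinement step: one must be sure that the $\fsl_2$-isomorphism $y^m$, which a priori only respects the $h$-eigenspaces, is compatible with the finer $z$-grading, and this rests exactly on the fact that $y$ moves the $z$-grading by a single deterministic step. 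Everything else is bookkeeping with the two gradings.
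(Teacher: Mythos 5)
Your proof is correct and follows essentially the same route as the paper: both rest on the $\fsl_2$-isomorphism between the opposite $h$-eigenspaces (equivalently, the graded pieces $\bbF_k/\bbF_{k-1}$ and $\bbF_{-k}/\bbF_{-k-1}$), the fact that $y$ shifts the $z$-grading by one step (Proposition~\ref{prop:Ei}), and the identification $\V_i=\E_i\cap\bbF_{r-2i}$ coming from Corollary~\ref{coro:chi-Ei}. The only difference is presentational: where the paper asserts directly that $y^{r-2i}$ restricts to an isomorphism $\V_i\to\V_{r-i}$, you recover the surjectivity by summing the componentwise injectivity inequalities over $j$ and comparing with the global equality $\dim\E^{[m]}=\dim\E^{[-m]}$, a clean way of making that step explicit.
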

\begin{proof}
 We know that $\V_i=\E_i\cap\bbF_{r-2i}$,
$\V_{r-i}=\E_{r-i}\cap\bbF_{2i-r}$, and that for $0\leq i\leq r/2$, $y^{r-2i}$ is an isomorphism between
$\bbF_{r-2i}/\bbF_{r-2i-1}$ and $\bbF_{2i-r}/\bbF_{2i-r-1}$. By Proposition~\ref{prop:Ei},
$y^{r-2i}$ maps $\E_i$ to $\E_{r-i}$. Since $\E_k\cap\bbF_{r-2k-1}=\{0\}$ for all $k$ by
Corollary~\ref{coro:chi-Ei}, we get that $y^{r-2i}$ is an isomorphism between $\V_i$ and $\V_{r-i}$.
\end{proof}

\medskip

The following algebraic fact is at the heart of the construction of Higgs subsheaves in Section~\ref{sec:MW}:

\begin{prop} \label{prop:algHiggs}
 The following inclusions hold:
 \begin{itemize}
  \item $y \cdot \V_i \subset \V_{i+1}$.
  \item $\fu_+ \cdot \V_i \subset \V_{i-1}$.
 \end{itemize}
\end{prop}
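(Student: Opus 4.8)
The plan is to prove both inclusions at once by combining the $z$-grading $\E=\E_0\oplus\cdots\oplus\E_p$ of Proposition~\ref{prop:Ei} with the $h$-filtration $(\bbF_k)$, exploiting the characterization $\V_i=\E_i\cap\bbF_{r-2i}$ recorded just before the statement (itself a consequence of Corollary~\ref{coro:chi-Ei}). The two mechanisms I would isolate are how $\fu_-$ and $\fu_+$ move the grading index $i$, and how they move the filtration level $k$, i.e. the $h$-eigenvalue.

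For the grading index, since $\fu_-=\bigoplus_{\scal{\alpha,z}=-2}\fg_\alpha$, $\fu_+=\bigoplus_{\scal{\alpha,z}=2}\fg_\alpha$, and $\E_i=\bigoplus_{\scal{\chi,z}=\zmax-2i}\E_\chi$, the relation $\fg_\alpha\cdot\E_\chi\subset\E_{\chi+\alpha}$ shows that each $y_{\alpha_j}\in\fg_{-\alpha_j}\subset\fu_-$ maps $\E_i$ into $\E_{i+1}$, while every root vector of $\fu_+$ maps $\E_i$ into $\E_{i-1}$; this is just Proposition~\ref{prop:Ei}(c) read at the level of weight spaces. For the filtration level, I would use that $h$ acts on $\E_\chi$ by the scalar $\scal{\chi,h}$, so that $\bbF_k$ is exactly the span of the $h$-eigenspaces of eigenvalue $\leq k$, and that a root vector in $\fg_\alpha$ shifts $h$-eigenvalues by $\scal{\alpha,h}$.

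For the first inclusion, the decisive point is that $(x,h,y)$ is an $\fsl_2$-triple, so $[h,y]=-2y$ and $y$ lowers every $h$-eigenvalue by exactly $2$ (equivalently, $\scal{\alpha_j,h}=\scal{\alpha_j,\alpha_1^\vee+\cdots+\alpha_r^\vee}=2$ for $j\leq r$, by orthogonality of the $\alpha_j$). Hence $y\cdot\bbF_k\subset\bbF_{k-2}$, and combining with the grading shift,
$$ y\cdot\V_i = y\cdot(\E_i\cap\bbF_{r-2i}) \subset \E_{i+1}\cap\bbF_{r-2i-2} = \V_{i+1}. $$
For the second inclusion, the key input is Proposition~\ref{prop:h}, which gives $\scal{\alpha,h}\leq 2$ for \emph{every} root $\alpha$; thus each root vector of $\fu_+$ raises $h$-eigenvalues by at most $2$, so $\fu_+\cdot\bbF_k\subset\bbF_{k+2}$, and with the grading shift,
$$ \fu_+\cdot\V_i \subset \E_{i-1}\cap\bbF_{r-2i+2} = \V_{i-1}. $$

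The computation itself is short; its content sits entirely in the two quoted facts. The step I expect to be delicate is the uniform bound $\scal{\alpha,h}\leq 2$ of Proposition~\ref{prop:h}: it is precisely the asymmetry between $\fu_-$, where $y$ shifts by \emph{exactly} $-2$, and $\fu_+$, where the shift is only bounded \emph{above} by $+2$, that makes the two inclusions drop and raise the grading index by a single step in the expected directions. I would therefore take care to invoke $\scal{\alpha,h}\leq 2$ for all roots $\alpha$ (not merely the noncompact ones that enter through $\E$), and to use the $\fsl_2$-triple rather than a crude weight-by-weight estimate, so as to secure the sharp shift $-2$ for $y$.
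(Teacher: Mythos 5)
Your proof is correct and takes essentially the same route as the paper: both arguments combine the $z$-grading shift of root vectors with the two $h$-eigenvalue shift facts --- exactly $-2$ for $y$ (orthogonality of the $\alpha_j$, equivalently the $\fsl_2$-relation $[h,y]=-2y$) and at most $+2$ for $\fu_+$ (Proposition~\ref{prop:h}). Your packaging via the filtration $(\mathbb{F}_k)$ and the identity $\V_i=\E_i\cap\mathbb{F}_{r-2i}$ simply makes explicit the appeal to Corollary~\ref{coro:chi-Ei} that the paper's two-line proof leaves implicit.
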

\begin{proof}
 The first statement holds because $y \in \bigoplus_{i} \fg_{-\alpha_i}$ and each $\alpha_i$ satisfies
 $\scal{-\alpha_i,z}=\scal{-\alpha_i,h}=-2$. The second statement holds because for $\alpha$ a root of
 $\fu_+$, we have $\scal{\alpha,z}=2$ (and $\scal{\alpha,h} \leq 2$ by Proposition \ref{prop:h}).
\end{proof}

\begin{defi}\label{defi:qlh}
 Let $\fq \subset \fg$ be the parabolic subalgebra defined by the coweight $z-h$:
 $$ \fq = \ft \oplus \bigoplus_{\alpha:\scal{\alpha,h} \leq \scal{\alpha,z}} \fg_\alpha \ . $$
A Levi factor of $\fq$ is
$$\ds \fl=\ft \oplus \bigoplus_{\alpha:\scal{\alpha,h} = \scal{\alpha,z}} \fg_\alpha \ . $$
Let  $\fl_\pm \subset \fl$ be the nilpotent subalgebras of $\fl$ defined by
$$\ds \fl_\pm = \bigoplus_{\alpha:\scal{\alpha,h} = \scal{\alpha,z} = \pm 2} \fg_\alpha \ . $$
A Levi factor of $\fk\cap\fq$ is
$$\ds \fh = \ft \oplus \bigoplus_{\alpha:\scal{\alpha,h} = \scal{\alpha,z}=0} \fg_\alpha \ .$$

We denote by $Q$, resp. $H$, the subgroups of $G$ with Lie algebra $\fq$, resp. $\fh$. We observe that
$H$ is a Levi subgroup of $K \cap Q$: in fact, we have $K\cap Q = H \cdot R(K \cap Q)$, where
$R(K \cap Q)$ denotes the radical of $K \cap Q$.
\end{defi}

For the convenience of the reader, the conditions that define the different Lie subalgebras of $\fg$
 we are 
considering  are displayed in Table~\ref{equa:tableau-groupes} (we abbreviate the condition
$\scal{\alpha,z}=0$, 
resp. $\scal{\alpha,h}=0$, on a root $\alpha$ as $z=0$, resp. $h=0$):


\begin{table}[ht]
$$
\begin{array}{|l|c|c|c|c|c|c|}
 \hline \mbox{Lie algebra} & \fk & \fu_\pm & \fq & \fl & \fl_\pm & \fh \\
 \hline \mbox{Condition} & z=0 & z = \pm 2 & h \leq z & h=z & h=z=\pm 2 & h=z=0 \\
 \hline
\end{array}
$$
\caption{Lie subalgebras under consideration}
\label{equa:tableau-groupes}
\end{table}

We have the following lemmas concerning the $\V_i$'s:
\begin{lemm}
 \label{lemm:Vi+1}
 We have $\V_{i+1} = \fl_- \cdot \V_i$ and $\V_{i-1} = \fl_+ \cdot \V_i$.
\end{lemm}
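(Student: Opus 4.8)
The plan is to realize $\V$ as a module over the Levi subalgebra $\fl$ and to prove that it is generated, under $\fl_-$, by its top graded piece $\V_0$; the two displayed identities then drop out by matching the grading.

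First I would record that $\V$ is $\fl$-stable. Every root $\alpha$ of $\fl$ satisfies $\scal{\alpha,z-h}=0$, so the coweight $z-h$ is central in $\fl$ and $\fl$ preserves each of its eigenspaces on $\E$. On the other hand, combining Definition~\ref{defi:V}, Corollary~\ref{coro:chi-Ei}, Proposition~\ref{prop:rang} and Proposition~\ref{prop:h}, a weight $\chi\in X(\E)$ occurs in $\V$ exactly when $\scal{\chi,z-h}=\zmax-r$ (the $h$-value is then forced to its minimal value $r-2i$ on $\E_i$). Thus $\V$ is precisely the $(\zmax-r)$-eigenspace of $z-h$, whence $\fl\cdot\V\subset\V$. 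Since a root space of $\fl_-$ (resp. $\fl_+$) lowers (resp. raises) both the $z$-grading and the $h$-grading by $2$, this yields at once the easy inclusions $\fl_-\cdot\V_i\subset\V_{i+1}$ and $\fl_+\cdot\V_i\subset\V_{i-1}$, and it shows that $\V_0=\E_\varpi$ is a one-dimensional highest weight line for $\fl$ (killed by $\fl_+$ because $\V_{-1}=0$).

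The heart of the argument is the identity $\V=U(\fl_-)\cdot\V_0$. By Proposition~\ref{prop:Ei} we have $\E=S(\fu_-)\cdot v_\varpi$ with $v_\varpi$ spanning $\E_\varpi=\V_0$. I would fix a weight vector $0\neq w\in\E_\chi\subset\V_i$ and write it as a combination of monomials $g_{\beta_1}\cdots g_{\beta_i}\cdot v_\varpi$, where the $\beta_j$ are roots of $\fu_-$ and every monomial has $\ft$-weight $\chi-\varpi$ (there are exactly $i$ factors, since each lowers the $z$-grading by $2$). Comparing $h$-weights forces $\sum_j\scal{\beta_j,h}=-2i$; as $\scal{\beta_j,h}\geq-2$ for every root by Proposition~\ref{prop:h}, each factor must satisfy $\scal{\beta_j,h}=\scal{\beta_j,z}=-2$, i.e. $g_{\beta_j}\in\fl_-$. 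Hence $w\in U(\fl_-)\cdot v_\varpi$, and letting $\chi,i,w$ vary gives $\V\subset U(\fl_-)\cdot\V_0$; the reverse inclusion is clear. Matching gradings then yields $\V_i=\fl_-^{\,i}\cdot\V_0$, and therefore $\V_{i+1}=\fl_-\cdot\V_i$, the first identity.

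For the second identity I would argue by symmetry once I know that $\V$ is an \emph{irreducible} $\fl$-module. Irreducibility follows because $\E$, hence its $\fl$-submodule $\V$, is a semisimple $\fl$-module, the highest weight $\varpi$ occurs in $\V$ with multiplicity $\dim\V_0=1$, and $\V$ is generated by the highest weight vector $v_\varpi$. Now Lemma~\ref{lemm:dim-Vi} gives $\dim\V_r=\dim\V_0=1$, while $\V_r$ is killed by $\fl_-$ (as $\V_{r+1}=0$), so $\V_r$ is the lowest weight line; irreducibility then gives $\V=U(\fl_+)\cdot\V_r$, and matching gradings yields $\V_{i-1}=\fl_+\cdot\V_i$. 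The one genuinely delicate point is the inclusion $\V\subset U(\fl_-)\cdot\V_0$: everything rests on the numerical fact that the total $h$-defect $-2i$ is spread over exactly $i$ factors each bounded below by $-2$, which is what pins every factor into $\fl_-$ rather than merely into $\fu_-$.
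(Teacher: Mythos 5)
Your proof is correct, and its route is genuinely different from the paper's. The paper argues level by level: by Proposition~\ref{prop:Ei}, $\E_{i+1}=\fu_-\cdot\E_i$ is spanned by the weight spaces $\E_{\chi+\alpha}$ with $\chi\in X(\E_i)$ and $\alpha\in\Phi(\fu_-)$, and the same two numerical bounds you invoke (Corollary~\ref{coro:chi-Ei}, and $\scal{\alpha,h}\geq-2$ from Proposition~\ref{prop:h}) show that such a weight space can lie in $\V_{i+1}$ only if $\scal{\chi,h}=r-2i$ and $\scal{\alpha,h}=-2$; this yields $\V_{i+1}=\fl_-\cdot\V_i$ at once, and the second identity is obtained by the mirror argument with $\fu_+$ acting from the lowest weight side. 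You run the same extremality argument globally instead of level by level: $\V$ is the top eigenspace of $z-h$, hence $\fl$-stable, and expanding a vector of $\V_i$ in PBW monomials from $\E=U(\fu_-)\cdot\E_\varpi$, the constraint that $i$ terms, each $\geq-2$, sum to $-2i$ forces every factor into $\fl_-$, giving $\V=U(\fl_-)\cdot\V_0$. This costs more writing but buys strictly more: the $\fl$-stability of $\V$, the generation statement, and the irreducibility of $\V$ as an $\fl$-module are facts the paper establishes only later (Proposition~\ref{prop:stabV}, Lemma~\ref{lemm:Vi-irr}, Proposition~\ref{prop:V-irr}), and it does so \emph{using} the present lemma; your self-contained irreducibility argument (semisimplicity of $\E_{|\fl}$, multiplicity one of the weight $\varpi$, cyclicity over the highest weight line) avoids any circularity, and your appeal to Lemma~\ref{lemm:dim-Vi} is legitimate since that lemma precedes this one in the paper. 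Only two steps are stated too tersely, and both are immediate: to see that $\V_r$ is a lowest weight line for $\fl$, and to reduce $U(\fl)\cdot\V_r$ to $U(\fl_+)\cdot\V_r$, you also need the root spaces of $\fl$ with $\scal{\alpha,z}=0$ to kill the one-dimensional lines $\V_r$ and $\V_0$; this holds because such root spaces preserve the grading while a nonzero root moves the weight of a weight vector.
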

\begin{proof}
Let us denote by $X(\E_i)$ the set of weights
of $\E_i$. We know by Proposition \ref{prop:Ei} that $\E_{i+1} = \fu_- \E_i$. Thus,
$$
\E_{i+1} = \left ( \bigoplus_{\alpha:\scal{\alpha,z} = -2} \fg_\alpha \right ) \cdot
\left ( \bigoplus_{\chi \in X(\E_i)} \E_\chi \right ) =
\bigoplus_{\chi \in X(\E_i), \alpha \in \Phi(\fu_-) } \E_{\chi+\alpha} \ .
$$
Given $\chi \in X(\E_i)$ and $\alpha \in \Phi(\fu_-)$, we can make two observations:
\begin{itemize}
 \item If $\scal{\chi,h} > r - 2i$ then we have $\scal{\chi+\alpha,h} > r -2(i+1)$ and thus
 $\E_{\chi + \alpha} \not \subset \V_{i+1}$.
 \item If $\scal{\alpha,h} > -2$ then $\scal{\chi+\alpha,h} > r -2(i+1)$.
\end{itemize}
The first equality of the lemma follows from these observations.
The proof of the second equality is similar.
\end{proof}

Note that $\fl_-$ is an $\fh$-module. More precisely, we have:

\begin{lemm}
 \label{lemm:Vi-irr}
 The modules $\V_i$ are irreducible $\fh$-modules.
 The Lie algebra $\fl_-$ is an irreducible $\fh$-module.
\end{lemm}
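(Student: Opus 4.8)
The plan is to deduce both assertions from a single general fact about extreme graded pieces of irreducible modules. I would first observe that $\fh$ is exactly the centralizer $\fk^h=\{X\in\fk:[h,X]=0\}$ of $h$ in $\fk$: since $\fk=\ft\oplus\bigoplus_{\scal{\alpha,z}=0}\fg_\alpha$ and a root space $\fg_\alpha\subset\fk$ commutes with $h\in\ft$ precisely when $\scal{\alpha,h}=0$, we get $\fk^h=\ft\oplus\bigoplus_{\scal{\alpha,z}=\scal{\alpha,h}=0}\fg_\alpha=\fh$. Thus $h$ induces a $\Z$-grading $\fk=\bigoplus_m\fk_m$, with $\fk_m=\bigoplus_{\scal{\alpha,z}=0,\,\scal{\alpha,h}=m}\fg_\alpha$ and $\fk_0=\fh$, and every finite-dimensional $\fk$-module inherits a grading by the $h$-eigenvalues, preserved by $\fh$.

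The general fact I would establish is: if $V$ is a finite-dimensional irreducible $\fk$-module, graded by the $h$-eigenvalues, then its lowest nonzero graded piece $V_{\min}$ is an irreducible $\fh$-module. The argument is short. Since $\fk_{<0}$ strictly lowers the degree, $\fk_{<0}\cdot V_{\min}=0$. Let $W\subseteq V_{\min}$ be a nonzero $\fh$-submodule. Using the PBW factorization $U(\fk)=U(\fk_{>0})\,U(\fk_0)\,U(\fk_{<0})$ together with $U(\fk_{<0})W=W$ (as $\fk_{<0}W=0$) and the $\fh$-stability $U(\fk_0)W=W$, we obtain $U(\fk)\cdot W=U(\fk_{>0})\cdot W$, which equals $V$ by irreducibility. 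As $U(\fk_{>0})$ raises degrees and its degree-zero part is just the scalars, the lowest-degree component of $U(\fk_{>0})\cdot W$ is $W$ itself, whereas that of $V$ is $V_{\min}$; hence $W=V_{\min}$, proving irreducibility.

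It then remains to recognize $\V_i$ and $\fl_-$ as such minimal pieces. For the first assertion I would take $V=\E_i$, which is an irreducible $\fk$-module by Proposition~\ref{prop:Ei-irr}: by Corollary~\ref{coro:chi-Ei} every weight $\chi$ of $\E_i$ satisfies $\scal{\chi,h}\geq r-2i$, and $\V_i$ is by definition the span of the weight spaces realizing this minimum, so $\V_i=(\E_i)_{\min}$ and is therefore irreducible over $\fh$. For the second assertion I would take $V=\fu_-$ with the adjoint action. Here $\fu_-$ is an irreducible $\fk$-module: by Proposition~\ref{prop:Ei}(b),(d) one has $\E_0=\E_\varpi$ one-dimensional and $\E_0\otimes\fu_-\cong\E_1$, so $\fu_-\cong\E_1\otimes\E_0^\vee$ is irreducible by Proposition~\ref{prop:Ei-irr}. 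For a root $\alpha$ of $\fu_-$ we have $\scal{\alpha,z}=-2$; writing $\alpha=-\beta$ with $\beta$ a root of $\fu_+$, Proposition~\ref{prop:h} gives $\scal{\beta,h}\leq 2$, i.e. $\scal{\alpha,h}\geq -2$, with equality for $\alpha=-\alpha_i$ because $\scal{\alpha_i,h}=\scal{\alpha_i,\alpha_1^\vee+\cdots+\alpha_r^\vee}=2$ by strong orthogonality. Hence the minimal $h$-graded piece of $\fu_-$ is exactly $\fl_-=\bigoplus_{\scal{\alpha,z}=\scal{\alpha,h}=-2}\fg_\alpha$, so $\fl_-=(\fu_-)_{\min}$ is irreducible over $\fh$.

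The only genuinely delicate point is the general fact itself, and within it the degree argument—specifically checking that $U(\fk_{>0})$ strictly raises the $h$-degree, so that extracting the lowest-degree component of $U(\fk_{>0})\cdot W$ returns $W$. Once that is in place, both statements reduce to the bookkeeping identifications $\V_i=(\E_i)_{\min}$ and $\fl_-=(\fu_-)_{\min}$, which follow directly from Corollary~\ref{coro:chi-Ei} and Proposition~\ref{prop:h}; everything else is an immediate application of results already proved in this section.
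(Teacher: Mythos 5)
Your proof is correct, and it runs on the same underlying principle as the paper's — the extreme $\ad(h)$-graded piece of an irreducible module is irreducible over the degree-zero subalgebra — but it packages and deploys that principle differently. The paper argues bottom-up from the lowest weight: it notes $\E_{\mu_i}\subset\V_i$, writes $\E_i=U(\fk_+)\cdot\E_{\mu_i}$ by $\fk$-irreducibility, and then, by the weight bookkeeping of Lemma~\ref{lemm:Vi+1}, gets $\V_i=U(\fh_+)\cdot\E_{\mu_i}$, so that $\V_i$ is a finite-dimensional $\fh$-module generated by a lowest weight line, hence irreducible. You instead isolate a general lemma (the lowest $h$-graded piece $V_{\min}$ of any finite-dimensional irreducible $\fk$-module is irreducible over $\fh=\fk_0$, the centralizer of $h$ in $\fk$), proved by taking an \emph{arbitrary} nonzero $\fh$-submodule $W\subseteq V_{\min}$ and using the PBW factorization $U(\fk)=U(\fk_{>0})\,U(\fk_0)\,U(\fk_{<0})$ to force $W=V_{\min}$. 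This buys a self-contained argument: irreducibility is proved directly from its definition, with no appeal to the (true, but unproved in the paper) fact that a cyclic module generated by a lowest weight vector is irreducible, and no cross-reference to the computation in Lemma~\ref{lemm:Vi+1}. The treatments of $\fl_-$ also genuinely differ: the paper deduces it for free from the already-proved case $i=1$ via $\V_1=\fl_-\cdot\V_0\simeq\fl_-\otimes\E_0$, whereas you apply your lemma to $V=\fu_-$ directly, at the cost of the extra observation (correctly supplied via $\fu_-\simeq\E_0^\vee\otimes\E_1$ and Proposition~\ref{prop:Ei-irr}) that $\fu_-$ is $\fk$-irreducible — the same tensor-by-a-line trick, run in the opposite direction. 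Two implicit caveats, shared with the paper, are worth making explicit: the identification $\V_i=(\E_i)_{\min}$ holds only when $\V_i\neq\{0\}$, i.e.\ $0\leq i\leq r$ (the paper fixes such an $i$ at the start of its proof), and $\fl_-=(\fu_-)_{\min}$ requires $r\geq 1$; outside these ranges the assertions are vacuous.
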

\begin{proof}
Denote by $\fk_+$ resp. $\fh_+$ the sum of the root spaces for positive roots in $\fk$ resp. $\fh$.
Let $i$ be fixed and such that $\V_i \neq \{0\}$. By Proposition \ref{prop:Ei}, $\E_i$ is an irreducible $\fk$-module.
Let $\mu_i$ be the lowest weight of $\E_i$. We have $\E_{\mu_i} \subset \V_i$.
Since $\E_i$ is irreducible, we have $\E_i = U(\fk_+) \cdot \E_{\mu_i}$ (here $U$ denotes the universal envelopping algebra).
Now, as in the proof
of the Lemma \ref{lemm:Vi+1}, we see that this implies that $\V_i = U(\fh_+) \cdot \E_{\mu_i}$. This proves that $\V_i$ is irreducible.
Now, by Lemma \ref{lemm:Vi+1} again, we have $\V_1 = \fl_- \cdot \V_0 \simeq \fl_- \cdot \E_0$: thus
$\fl_-$ is also irreducible.
\end{proof}

This allows to compute the {\em slope} of the $H$-modules $\V_i$. We first need two definitions and a lemma.

\begin{defi}
 \label{defi:equislope}
 Let $\W$ be a $H$-module (here $H$ can be any reductive group).
 The {\em slope} of $\W$ is the element $\mu(\W)=\frac{\det(\W)}{\dim(\W)}$ in $X(H) \otimes_\Z \Q$, where
 $X(H)$ is the character group of $H$.
 We say that $\W$ is {\em equislope} if, for the decomposition $\W=\bigoplus_i \W_i$ as a sum
 of irreducible submodules, we have
 $\forall i,j$, $ \mu(\W_i)=\mu(\W_j)$.
\end{defi}

\begin{lemm}
 \label{lemm:equislope}
 Let $\W$, $\W'$ be equislope $H$-modules (here again, $H$ can be any reductive group). Then $\W
 \otimes \W'$ is equislope.
\end{lemm}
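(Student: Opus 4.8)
The plan is to reduce everything to the behaviour of the \emph{central character}. Write $Z$ for the connected component of the center of $H$, a central torus, and recall that, $H$ being connected reductive, $H=Z\cdot[H,H]$; consequently every character of $H$ is trivial on $[H,H]$, and an element of $X(H)\otimes_\Z\Q$ that is trivial on $[H,H]$ is determined by its restriction to $Z$.

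First I would record the only computation about slopes that is needed: for \emph{any} $H$-module $V$ the character $\det(V)$ is the sum of the weights of $V$ counted with multiplicity, so that $\mu(V)=\frac1{\dim V}\sum_{\chi}\chi$ is the barycenter of the weights. If $V$ is irreducible, Schur's lemma forces the central torus $Z$ to act on $V$ through a single character $\omega_V\in X(Z)$; hence every weight of $V$ restricts to $\omega_V$ on $Z$, and therefore $\mu(V)$ restricts to $\omega_V$ on $Z$. Since $\mu(V)$ is trivial on $[H,H]$ and $H=Z\cdot[H,H]$, the element $\mu(V)\in X(H)\otimes\Q$ is \emph{uniquely determined} by $\omega_V$. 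In particular, two irreducible $H$-modules with the same central character have the same slope, so an $H$-module is equislope \iff all its irreducible constituents share the same central character.

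With this criterion the statement becomes immediate. By hypothesis all irreducible constituents of $\W$ have a common central character $\omega$; since $Z$ is central it therefore acts on the whole of $\W$ by the scalar character $\omega$, and likewise on $\W'$ by a single character $\omega'$. Consequently $Z$ acts on $\W\otimes\W'$ by the character $\omega+\omega'$, so every irreducible constituent of $\W\otimes\W'$ has central character $\omega+\omega'$; by the criterion above, $\W\otimes\W'$ is equislope.

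The only genuinely delicate point is the first step, namely that the slope of an irreducible module is pinned down by its central character alone; this rests on the structural identity $H=Z\cdot[H,H]$ for connected reductive $H$ together with the triviality of characters on the derived subgroup. Equivalently, one may phrase this step through the additivity $\mu(V\otimes V')=\mu(V)+\mu(V')$, which follows from $\det(V\otimes V')=(\dim V')\,\det(V)+(\dim V)\,\det(V')$ applied to the constituents; but the central-character formulation is preferable because it avoids having to control the further decomposition of each $\W_i\otimes\W'_j$ into irreducibles, where the additive formula only yields the \emph{average} slope and not the slope of every piece.
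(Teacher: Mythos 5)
Your proof is correct and follows essentially the same route as the paper's: both arguments rest on Schur's lemma giving a single central character to each irreducible constituent, together with the injectivity of restriction $X(H)\hookrightarrow X(Z)$ (from $H=Z\cdot[H,H]$ and the triviality of characters on the derived subgroup), so that the slope of an irreducible module is pinned down by its central character. Your ``equislope \iff common central character'' criterion is just a clean repackaging of the paper's computation that every irreducible component $\mathbb{U}\subset\W_i\otimes\W'_j$ satisfies $\mu(\mathbb{U})=\mu(\W_i)+\mu(\W'_j)$.
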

\begin{proof}
Let $Z$ be the center of $H$. It is known that restriction to $Z$ yields an injection $X(H)
\hookrightarrow X(Z)$. In fact, by \cite{borel}*{Proposition 14.2},
 we have $H=Z \cdot (H,H)$, and any character of $H$ is trivial on $(H,H)$.

 Let us first assume that $\W$ and $\W'$ are irreducible $H$-modules. By Schur's lemma, there are
 characters $\chi,\psi$ of $Z$ such that $\forall g \in Z, \forall w \in \W, \forall w' \in \W', g
 \cdot w = \chi(g)w$
 and $g \cdot w' = \psi(g)w'$. Therefore, in $X(Z) \otimes \Q$, we have
 $\chi = \mu(\W)_{|Z}$ and $\psi = \mu(\W')_{|Z}$.
 Let now $\mathbb U \subset \W \otimes \W'$ be an irreducible component. We have
 $\forall u \in \mathbb U, \forall g \in Z, g\cdot u = \chi(g) \psi(g) u$.
 Therefore $\mu(\mathbb U)_{|Z} = \chi + \mu$. Thus, ${(\mu(\W)+\mu(\W'))}_{|Z} = \mu(\mathbb U)_{|Z}$.
 Since restriction of characters to $Z$ is injective, we have $\mu(\mathbb U) = \mu(\W) + \mu(\W')$.

 Let now $\W$ and $\W'$ be arbitrary $H$-modules, and write the decomposition into irreducible submodules:
 $\W=\bigoplus \W_i$ and $\W'=\bigoplus \W'_j$. Let $i,j$ be fixed and let $\mathbb U \subset \W_i
 \otimes \W_j'$ be an
 irreducible factor. We have proved that $\mu(\mathbb U)=\mu(\W_i)+\mu(\W'_j)$. Since $\W$ and $\W'$
 are equislope,
 we have $\mu(\W_i)=\mu(\W)$ and $\mu(\W'_j)=\mu(\W')$. Thus, $\mathbb U$ has slope
 $\mu(\W)+\mu(\W')$ and the lemma is proved.
\end{proof}

\begin{rema}
 Let $\Gamma$ be the Schur functor associated with a partition $\lambda$ of weight $|\lambda|$. Let $\W$
 be an $H$-module. Since
 $\Gamma(\W)$ is a direct factor of $\W^{\otimes |\lambda|}$, it follows from the Proposition
 that $\mu(\Gamma(\W)) = |\lambda| \mu(\W)$.
\end{rema}

We apply this lemma to the $H$-modules $\V_i$:

\begin{prop}
 \label{prop:calcul-pente}
 We have $\mu(\V_i) = \mu(\V_0)+i\mu(\V_0^\vee \otimes \V_1)$.
\end{prop}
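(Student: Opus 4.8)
The plan is to establish the one-step recursion $\mu(\V_{i+1}) = \mu(\V_i) + \mu(\fl_-)$ for $0 \leq i \leq r-1$, iterate it, and finally identify the constant increment $\mu(\fl_-)$ with $\mu(\V_0^\vee \otimes \V_1)$. Throughout I assume $r \geq 1$ so that $\V_1 \neq 0$ and the right-hand side is meaningful; the case $r=0$ is vacuous since then only $i=0$ occurs.

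First I would exploit Lemma~\ref{lemm:Vi+1}, which gives $\V_{i+1} = \fl_- \cdot \V_i$. This says precisely that $\V_{i+1}$ is the image of the multiplication map $\fl_- \otimes \V_i \to \E$, $\xi \otimes v \mapsto \xi \cdot v$, obtained by restricting the $\fg$-module structure of $\E$ to the $\fh$-submodule $\fl_- \subset \fg$ (adjoint action) and the $\fh$-submodule $\V_i \subset \E$. Since $\fh$ acts by derivations on the module structure, this map is a morphism of $\fh$-modules, hence of $H$-modules as $H$ is a connected Levi subgroup (Definition~\ref{defi:qlh}). By Lemma~\ref{lemm:Vi-irr} the target $\V_{i+1}$ is an irreducible $H$-module, and since $H$ is reductive every finite-dimensional representation is completely reducible; consequently the surjection $\fl_- \otimes \V_i \to \V_{i+1}$ splits, so that $\V_{i+1}$ occurs as an irreducible direct summand of $\fl_- \otimes \V_i$.

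Next I would apply Lemma~\ref{lemm:equislope}. Both $\fl_-$ and $\V_i$ are irreducible $H$-modules (Lemma~\ref{lemm:Vi-irr}), hence trivially equislope, so $\fl_- \otimes \V_i$ is equislope with every irreducible factor of slope $\mu(\fl_-) + \mu(\V_i)$; as $\V_{i+1}$ is one such factor, $\mu(\V_{i+1}) = \mu(\fl_-) + \mu(\V_i)$. Iterating from $i=0$ yields $\mu(\V_i) = \mu(\V_0) + i\,\mu(\fl_-)$ for all $0 \leq i \leq r$. It then remains to recognize $\mu(\fl_-)$. The case $i=0$ of the recursion gives $\mu(\fl_-) = \mu(\V_1) - \mu(\V_0)$. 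On the other hand, $\V_0^\vee$ and $\V_1$ are irreducible, so by Lemma~\ref{lemm:equislope} the module $\V_0^\vee \otimes \V_1$ is equislope; and the slope of an equislope module equals the common slope of its irreducible constituents, because in the decomposition $\W = \bigoplus_k \W_k$ both $\det$ and $\dim$ are additive over direct sums, whence $\mu(\W) = \det(\W)/\dim(\W)$ equals the shared value $\mu(\W_k)$. Therefore $\mu(\V_0^\vee \otimes \V_1) = \mu(\V_0^\vee) + \mu(\V_1) = \mu(\V_1) - \mu(\V_0) = \mu(\fl_-)$, using $\mu(\V_0^\vee) = -\mu(\V_0)$. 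Substituting into $\mu(\V_i) = \mu(\V_0) + i\,\mu(\fl_-)$ gives the claim.

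The main obstacle is the passage from \emph{quotient} to \emph{direct summand}: Lemma~\ref{lemm:equislope} controls the slopes of the irreducible components of a tensor product, so to transfer this information to $\V_{i+1}$ I must know that $\V_{i+1}$ is a genuine summand of $\fl_- \otimes \V_i$, not merely a quotient. This is exactly what complete reducibility of $H$-modules provides. The only points requiring care are therefore the $H$-equivariance of the multiplication map and the compatibility of the constructions of $\fl_-$ and the $\V_i$ (Definitions~\ref{defi:V} and~\ref{defi:qlh}) with the $H$-action; the remaining slope identities are formal consequences of the additivity of $\det$ and $\dim$.
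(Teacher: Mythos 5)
Your proof is correct and follows essentially the same route as the paper: both arguments realize $\V_{i+1}$, via Lemma~\ref{lemm:Vi+1}, as a direct summand (by complete reducibility over the reductive group $H$) of $\fl_- \otimes \V_i$, and then combine the irreducibility statements of Lemma~\ref{lemm:Vi-irr} with Lemma~\ref{lemm:equislope} to obtain the recursion $\mu(\V_{i+1}) = \mu(\V_i) + \mu(\fl_-)$. The only cosmetic difference is in the last step: the paper identifies $\mu(\fl_-)=\mu(\V_0^\vee \otimes \V_1)$ through the $H$-module isomorphism $\fl_- \simeq \V_0^\vee \otimes \V_1$, whereas you recover the same equality by slope arithmetic, reading $\mu(\fl_-)=\mu(\V_1)-\mu(\V_0)$ off the case $i=0$ and computing $\mu(\V_0^\vee\otimes\V_1)$ by additivity of $\det$ and $\dim$.
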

\begin{proof}
 The $H$-modules $\V_i$ are irreducible by Lemma \ref{lemm:Vi-irr}, thus equislope.
 The same holds for the $H$-module $\fl_-\simeq \V_0^\vee \otimes \V_1$, and we have
 $\mu(\fl_-) = \mu(\V_0^\vee \otimes \V_1)$. Let $i$ be fixed.
 By Lemma \ref{lemm:equislope}, $\V_i \otimes \fl_-$ is equislope. Since, by Lemma \ref{lemm:Vi+1},
 $\V_{i+1}$ is a direct factor of $\V_i \otimes \fl_-$, it follows that $\mu(\V_{i+1}) = \mu(\V_i) + \mu(\fl_-)$.
\end{proof}

Concerning the submodule $\V$, we have

\begin{prop} \label{prop:stabV}
\begin{enumerate}
\item\label{item:parabolic}
 The subspace $\V \subset \E$ is stable under $\fq$.
\item $\V_i$ is a $K\cap Q$-module.
\item \label{item:reductive-Q}
 The nilpotent radical of $\fq$ acts trivially on $\V$.
\item \label{item:stabilizer-V}
 The subgroup of elements in $G$ which preserve $\V$ is exactly $Q$.
\end{enumerate}
\end{prop}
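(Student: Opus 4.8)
The plan is to dispatch the first three points by a single bookkeeping argument with the pairings $\scal{\cdot,z}$ and $\scal{\cdot,h}$, and to concentrate the real effort on the fourth. Since $\V$ is a sum of weight spaces it is automatically $\ft$-stable, so in each point it suffices to control the action of the individual root spaces $\fg_\alpha$. For point (1), I would take $\alpha$ with $\scal{\alpha,h}\leq\scal{\alpha,z}$ and $\E_\chi\subseteq\V_i$ (so $\scal{\chi,z}=\zmax-2i$, $\scal{\chi,h}=r-2i$), and note that $\fg_\alpha\cdot\E_\chi\subseteq\E_{\chi+\alpha}$ lies in $\E_j$ with $j=i-\tfrac12\scal{\alpha,z}$, where $\scal{\chi+\alpha,h}=(r-2i)+\scal{\alpha,h}\leq r-2j$. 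If $\scal{\alpha,h}<\scal{\alpha,z}$ this value is $<r-2j$, impossible for a weight of $\E_j$ by Corollary~\ref{coro:chi-Ei}, so $\chi+\alpha$ is no weight and the action vanishes; if $\scal{\alpha,h}=\scal{\alpha,z}$ then $\scal{\chi+\alpha,h}=r-2j$, so $\E_{\chi+\alpha}\subseteq\V_j$. Either way $\fg_\alpha\cdot\V\subseteq\V$. Point (3) is the same computation read off for $\alpha$ in the nilpotent radical ($\scal{\alpha,h}<\scal{\alpha,z}$): the target weight would violate Corollary~\ref{coro:chi-Ei}, so $\fg_\alpha\cdot\V=0$. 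For point (2), the roots of $\fk\cap\fq$ are those with $\scal{\alpha,z}=0$ and $\scal{\alpha,h}\leq 0$; the same estimate keeps $\chi+\alpha$ inside $\E_i$ and forces $\scal{\chi+\alpha,h}=r-2i$, so each $\V_i$ is $\fk\cap\fq$-stable, and as $K\cap Q$ is connected this makes $\V_i$ a $K\cap Q$-module.

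For point (4), point (1) already gives $\fq\subseteq\mathfrak{stab}_\fg(\V):=\{X\in\fg:X\cdot\V\subseteq\V\}$, hence $Q\subseteq\mathrm{Stab}_G(\V)$; the content is the reverse inclusion. Since $\V$ is $\ft$-stable, $\mathfrak{stab}_\fg(\V)$ is $\ad(\ft)$-stable, so it equals $\ft\oplus\bigoplus_{\alpha\in S}\fg_\alpha$ for a set of roots $S$, and I must show $S$ contains no root $\alpha$ of the opposite nilradical, i.e. no $\alpha$ with $\scal{\alpha,z-h}<0$. The key reformulation of Corollary~\ref{coro:chi-Ei} is that $\scal{\chi,z-h}\leq\zmax-r$ for every $\chi\in X(\E)$, with equality exactly on $X(\V)$. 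So I would fix $\alpha$ with $\scal{\alpha,z-h}<0$ and suppose for contradiction $\fg_\alpha\cdot\V\subseteq\V$: for $\chi\in X(\V)$ the weight $\chi+\alpha$ has $\scal{\chi+\alpha,z-h}<\zmax-r$, so it cannot lie in $X(\V)$, forcing $\fg_\alpha\cdot\V=0$. Now $-\alpha$ lies in the nilpotent radical of $\fq$, so point~\ref{item:reductive-Q} gives $\fg_{-\alpha}\cdot\V=0$ as well; thus $\fsl_2(\alpha)$ acts trivially on $\V$ and every weight of $\V$ is orthogonal to $\alpha^\vee$. Evaluating on the highest weight $\varpi$ (which spans $\V_0=\E_\varpi$) gives $\scal{\varpi,\alpha^\vee}=n_\zeta(\alpha)\tfrac{\|\zeta\|^2}{\|\alpha\|^2}=0$, forcing $\scal{\alpha,z}=0$; evaluating on $\varpi-\alpha_1-\cdots-\alpha_r\in X(\V_r)$ (this is $s_{\alpha_1}\cdots s_{\alpha_r}\varpi$, with pairings $\zmax-2r$ against $z$ and $-r$ against $h$) gives $\scal{\alpha_1+\cdots+\alpha_r,\alpha^\vee}=0$. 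Because all the $\alpha_i$ are long (Proposition~\ref{prop:cascade}), this last quantity is a positive multiple of $\scal{\alpha,h}$, whence $\scal{\alpha,h}=0=\scal{\alpha,z}$, contradicting $\scal{\alpha,z-h}<0$. Hence $\mathfrak{stab}_\fg(\V)=\fq$. Finally $\mathrm{Stab}_G(\V)$ is a closed subgroup containing the parabolic $Q$ and sharing its Lie algebra, so its identity component is $Q$; as it normalizes its identity component and parabolic subgroups are self-normalizing, $\mathrm{Stab}_G(\V)=Q$.

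The main obstacle is exactly the reverse inclusion in (4): excluding that a \emph{compact} root of the opposite nilradical stabilizes $\V$. The first three points, and even the noncompact directions of (4), are pure bookkeeping with $\scal{\cdot,h}$ and $\scal{\cdot,z}$, but compact directions cannot be ruled out from the highest weight vector alone, on which every compact coroot vanishes. The decisive input is to feed point~\ref{item:reductive-Q} together with the explicit extremal weight $\varpi-\alpha_1-\cdots-\alpha_r$ of $\V_r$ and the longness of the $\alpha_i$ into the vanishing of $\fsl_2(\alpha)$ on $\V$, turning it into the numerical identity $\scal{\alpha,h}=0$; arranging this clean conversion is where I expect the proof to require the most care.
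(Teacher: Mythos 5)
Your proposal is correct. Points (1)--(3) are essentially the paper's own argument: the same bookkeeping with $\scal{\cdot,z}$ and $\scal{\cdot,h}$ against Corollary~\ref{coro:chi-Ei}, with only a cosmetic difference in (2) --- the paper avoids your appeal to connectedness of $K\cap Q$ by noting that $\V_i=\E_i\cap\V$ is the intersection of a $K$-stable subspace with a $Q$-stable one (connectedness of $K\cap Q$ is true, being a parabolic subgroup of $K$, but you assert it without justification; the paper's one-liner sidesteps the issue).

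Point (4), however, is where you genuinely diverge, and your route is correct. The paper splits into two cases according to $\scal{\alpha,z}$: for $\scal{\alpha,z}=-2$ it uses the isomorphism $\E_1\simeq\E_0\otimes\fu_-$ of Proposition~\ref{prop:Ei} to see that $\fg_\alpha\cdot\E_\varpi=\E_{\varpi+\alpha}\neq 0$, forcing $\scal{\alpha,h}=-2$; for $\scal{\alpha,z}=0$, $\scal{\alpha,h}=1$ it finds an index $i$ with $\scal{\alpha,\alpha_i^\vee}=1$ and exploits the bracket identity $\fg_{\alpha-\alpha_i}\cdot\E_\varpi=\fg_\alpha\cdot(\fg_{-\alpha_i}\cdot\E_\varpi)$ to exhibit a weight of $\fg_\alpha\cdot\V_1$ outside $\V$. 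Your argument is uniform in $\alpha$: the reformulation of Corollary~\ref{coro:chi-Ei} as ``$\scal{\chi,z-h}\leq\zmax-r$ with equality exactly on $X(\V)$'' shows that any $\fg_\alpha$ with $\scal{\alpha,z-h}<0$ stabilizing $\V$ must in fact annihilate it; then point (3) applied to $-\alpha$ (which lies in the nilradical of $\fq$) makes the whole $\fsl_2(\alpha)$ kill $\V$, so $\alpha^\vee$ pairs to zero with every weight of $\V$; evaluating against $\varpi$ gives $\scal{\alpha,z}=0$, and against $\varpi-\alpha_1-\cdots-\alpha_r\in X(\V_r)$ gives $\scal{\alpha_1+\cdots+\alpha_r,\alpha^\vee}=0$, which by the common length of the long roots $\alpha_i$ rescales to $\scal{\alpha,h}=0$ --- contradicting $\scal{\alpha,z-h}<0$. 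All the individual steps check out (including the rescaling $\scal{\alpha_i,\alpha^\vee}=\frac{\|\alpha_i\|^2}{\|\alpha\|^2}\scal{\alpha,\alpha_i^\vee}$, the same device the paper uses in Proposition~\ref{prop:cascade}(4)). What your approach buys is the elimination of the case analysis and of any appeal to the module structure $\E_1\simeq\E_0\otimes\fu_-$, at the price of invoking the extremal weight of $\V_r$ rather than working only near the highest weight; it also reuses point (3) as an input to point (4), which the paper's proof does not. The final group-theoretic step (identity component plus self-normalization of parabolics, versus the paper's ``contains a parabolic, hence parabolic, hence connected'') is an equivalent standard argument.
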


\begin{proof}

Let us prove the first point.  The subspace $\V$ is clearly stable under $\ft$. Let $\alpha$ be such
that $\scal{\alpha,h} \leq \scal{\alpha,z}$.
 Let $v \in \V_\chi \subset \V_i$, thus we have $\scal{\chi,h} = r-2i$.
 For $x \in \fg_\alpha$, we have $x \cdot v \in \E_{\chi+\alpha} \subset \E_{i-\scal{\alpha,z}/2}$. Since
 $\scal{\alpha,h} \leq \scal{\alpha,z}$, we have $\scal{\chi + \alpha , h} \leq r-2i+\scal{\alpha,z}$,
 thus either $x \cdot v = 0$ or $\scal{\chi + \alpha , h} = r-2i+\scal{\alpha,z}$, by Corollary \ref{coro:chi-Ei}.
 In the second case, we get $x \cdot v \in \V_{i-\scal{\alpha,z}/2}$.
The fact that $\V_i$ is a $K\cap Q$-module follows because $\V_i = \E_i \cap \V$, $\E_i$ is $K$-stable, and
$\V$ is $Q$-stable.

For the third point, let $\alpha$ be a root of the nilpotent radical of $\fq$: we have $\scal{\alpha,h} < \scal{\alpha,z}$.
 Let $v \in \V_\chi \subset \V_i$, thus we have $\scal{\chi,h} = r-2i$.
 For $x \in \fg_\alpha$, we have $x \cdot v \in \E_{\chi+\alpha} \subset \E_{i-\scal{\alpha,z}/2}$.
 However, we get $\scal{\chi + \alpha , h} > r-2i+\scal{\alpha,z}$. Thus, $x \cdot v = 0$.

Finally, to prove that the stabilizer of $\V$ is exactly $Q$, let us denote by
$\mathfrak{stab}(\V) \subset \fg$ the Lie subalgebra
preserving the subspace $\V$ in $\E$.
 We know by (\ref{item:parabolic}) that $\mathfrak{stab}(\V) \supset \fq$.
On the other hand, let $\alpha$ be a root and let $0 \neq x \in \fg_\alpha$ be such that
 $x \cdot \V \subset \V$.
 Let us assume as a first case that $\scal{\alpha,z}=-2$. Since, by Proposition \ref{prop:Ei}(e),
 the action of $\fg$ on $\E$ induces
 an isomorphism $\E_1 \simeq \E_0 \otimes \fu_-$, we have $x \cdot \E_{\varpi} = \E_{\varpi+\alpha}$.
 Then $\E_{\varpi+\alpha} \subset \V$, so $\scal{\alpha,h}=-2$, and $x \in \fq$.
 Assume now that $\scal{\alpha,z}=0$, and by contradiction that $\scal{\alpha,h}>0$.
 Proposition \ref{prop:h} then implies that $\scal{\alpha,h}=1$.
 For any integer $i$,
 we cannot have $\scal{\alpha,\alpha_i^\vee}=2$ because this would imply $\alpha=\alpha_i$ and
 $\scal{\alpha,h}=2$. Let $i$ be such that $\scal{\alpha,\alpha_i^\vee}>0$: then
 $\scal{\alpha,\alpha_i^\vee}=1$. Therefore, $\alpha-\alpha_i$ is a root.
 Since $\scal{\alpha,z}=0$, $\fg_\alpha \cdot \E_0 \subset \E_0 \cap \E_{\varpi+\alpha}$, thus
 $\fg_\alpha \cdot \E_0 = \{0\}$. It follows that
 $$
 \fg_{\alpha-\alpha_i} \cdot \E_{\varpi} = [ \fg_\alpha , \fg_{-\alpha_i} ] \cdot \E_{\varpi}
 = \fg_\alpha \cdot \left ( \fg_{-\alpha_i} \cdot \E_{\varpi} \right ).
 $$
 Since $\fg_{\alpha-\alpha_i} \cdot \E_{\varpi} = \E_{\varpi + \alpha - \alpha_i}$
 (again by by Proposition \ref{prop:Ei}(e)), this contradicts $x \cdot \V \subset \V$.

 Let now ${\rm Stab}(\V) \subset G$ be the subgroup stabilizing $\V$. We have $Q \subset {\rm Stab}
 (\V)$, thus
 ${\rm Stab}(\V)$ is parabolic and therefore connected \cite{humphreys}*{Corollary 23.1.B}. It
 follows that $Q={\rm Stab}(\V)$.
\end{proof}

Moreover:

\begin{prop}
 \label{prop:V-irr}
 The $\fl$-module $\V$ is irreducible and, as an $\fl$-module, it has tube type.
\end{prop}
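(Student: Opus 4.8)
The plan is to prove the two assertions in turn, in both cases exploiting that $\fl=\fl_-\oplus\fh\oplus\fl_+$ is a three-graded reductive Lie algebra and that $(\fl,\V)$ is itself a Hermitian cominuscule datum, with $h$ playing the role that $z$ plays for $(\fg,\E)$.

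For irreducibility I would first observe that $\V$ is $\fl$-stable, since $\fl\subset\fq$ and $\V$ is $\fq$-stable by Proposition~\ref{prop:stabV}(\ref{item:parabolic}). The key point is then that $\V_0=\E_\varpi$ is a highest weight line for $\fl$: it is one-dimensional by Proposition~\ref{prop:Ei}(b), it lies in $\V$ because $\scal{\varpi,h}=r$ by Proposition~\ref{prop:h}, and it is killed by every positive root vector of $\fl$. Indeed, the positive root spaces of $\fl$ are $\fl_+$ together with $\fh_+$; one has $\fl_+\cdot\V_0=\V_{-1}=\{0\}$ by Lemma~\ref{lemm:Vi+1}, while $\fh_+\subset\fk_+$ annihilates the highest weight space $\E_\varpi$ of $\E$. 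Since Lemma~\ref{lemm:Vi+1} gives $\V_{i+1}=\fl_-\cdot\V_i$, we get $\V=U(\fl_-)\cdot\V_0$, and combined with $\fl$-stability this yields $\V=U(\fl)\cdot\V_0$, the cyclic module generated by a highest weight vector, hence irreducible.

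For the tube type assertion I would recognise $(\fl,\V)$ as a cominuscule datum and apply the criterion of Remark~\ref{rema:tube-type} to it. Every root $\alpha$ of $\fl$ satisfies $\scal{\alpha,h}=\scal{\alpha,z}\in\{-2,0,2\}$, so $\ad(h)$ has eigenvalues $0,\pm2$ on $\fh,\fl_\pm$ (the $\fl_\pm$ being abelian as subalgebras of $\fu_\pm$), and $h$ centralises $\fh$; thus $h$ is the analogue of $z$ for $(\fl,\V)$, with $\V$ graded by $\scal{\cdot,h}$ through the values $r,r-2,\ldots,-r$. Applying Proposition~\ref{prop:rang} to $(\fl,\V)$ shows that its rank is exactly $r$. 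The roots $\alpha_1,\ldots,\alpha_r$ are long and strongly orthogonal (Proposition~\ref{prop:cascade}(1)) and lie in $\fl_+$ since $\scal{\alpha_i,h}=\scal{\alpha_i,z}=2$, so they form a maximal set of strongly orthogonal roots of $\fl_+$, with coroot sum $h=\alpha_1^\vee+\cdots+\alpha_r^\vee$ by construction. Because $h$ centralises $\fh$ it is fixed by $W_\fh$, and since all maximal strongly orthogonal sets of $\fl_+$ are $W_\fh$-conjugate, the coroot sum of the maximal dominant orthogonal sequence of $(\fl,\V)$ also equals $h$. The criterion of Remark~\ref{rema:tube-type} then shows that $\V$ has tube type.

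The main obstacle will be making rigorous the assertion that $(\fl,\V)$ is a genuine Hermitian cominuscule datum to which Proposition~\ref{prop:rang} and Remark~\ref{rema:tube-type} apply verbatim, rather than merely an analogue: one must check that the three-grading is honest (eigenvalues exactly $\pm2$), that $\V_0$ is its highest weight line, and that the transfer of these earlier statements is legitimate. As a reassuring consistency check, note that $\dim\V_r=\dim\V_0=1$ by Lemma~\ref{lemm:dim-Vi}, which is precisely the collapse of the extreme graded piece that characterises the tube type case.
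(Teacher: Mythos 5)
Your proof is correct, and on both assertions it takes a genuinely different route from the paper's. For irreducibility, the paper combines Lemma~\ref{lemm:Vi+1} with Lemma~\ref{lemm:Vi-irr}: each graded piece $\V_i$ is an irreducible $\fh$-module, so any nonzero $\fl$-submodule, being $\ft$-graded, contains some $\V_i$ in full, and the operators $\fl_\pm$ of Lemma~\ref{lemm:Vi+1} then propagate it to all of $\V$. You bypass Lemma~\ref{lemm:Vi-irr} entirely: you exhibit $\V_0=\E_\varpi$ as a highest weight line for the Borel subalgebra $\ft\oplus\fh_+\oplus\fl_+$ of $\fl$, note that $\V=U(\fl)\cdot\V_0$, and invoke the standard fact that a finite-dimensional cyclic highest weight module over a reductive Lie algebra is irreducible (complete reducibility applies because the center of $\fl$ lies in $\ft$ and hence acts semisimply on $\E$, and uniqueness of the maximal proper submodule then forces a single irreducible summand). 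This is more self-contained, though the paper's route costs nothing since Lemma~\ref{lemm:Vi-irr} is needed anyway for the slope computation of Proposition~\ref{prop:calcul-pente}. For the tube type assertion, both proofs apply the criterion of Remark~\ref{rema:tube-type} to the pair $(\fl,\V)$ with grading element $h$, but they justify the key identification differently: the paper asserts that a dominant orthogonal sequence for $\varpi$ as a weight of $\fg$ is also one for $\varpi$ as a weight of $\fl$, so that the $\fl$-cascade is $(\alpha_1,\dots,\alpha_r)$, whose coroot sum is $h$ by construction; you avoid identifying the $\fl$-cascade with the truncated $\fg$-cascade (a point one might legitimately worry about, since the simple roots of $\fl$ need not be simple in $\fg$) by observing that the $\fl$-cascade is \emph{some} maximal strongly orthogonal system in $\fl_+$, that $\{\alpha_1,\dots,\alpha_r\}$ is another such system by the rank count, and that all such systems are $W_{\fh}$-conjugate while $h$ is $W_{\fh}$-fixed, so every such coroot sum equals $h$. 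Your route thus trades the paper's unproved compatibility assertion for the classical Weyl-conjugacy of maximal strongly orthogonal systems, which is arguably more robust. Both arguments share the same implicit transfer step --- applying Proposition~\ref{prop:rang} and Remark~\ref{rema:tube-type}, stated for the simple Hermitian pair $(\fg,\E)$, to the reductive pair $(\fl,\V)$ --- and you at least flag this explicitly where the paper glosses over it.
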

\begin{proof}
 Combining Lemmas \ref{lemm:Vi+1} and \ref{lemm:Vi-irr}, we get the irreducibility of $\V$.
 Note that, by definition, any root $\alpha$ of $\fl$ satisfies $\scal{\alpha,z} = \scal{\alpha,h}$.
 Now, a dominant orthogonal sequence for the weight $\varpi$ as a weight of $\fg$ is also a dominant
 orthogonal sequence for  $\varpi$ as a weight of $\fl$. It follows that $\V$ satisfies the 
 assumption of Remark \ref{rema:tube-type}, and therefore  $\V$ has tube type. This means
   that if $L$ is the subgroup of $G$ with Lie algebra $\fl$ and if we set $L_\R=L\cap G_\R$ and
   $H_\R=H\cap G_\R=L_\R\cap K_\R$, then the
   Hermitian symmetric space $L_\R/H_\R$ has tube type.
\end{proof}

\commentaire{
\begin{coro}
 \label{coro:dim-Vi}
 We have $\dim \V_{r-i} = \dim \V_i$.
\end{coro}
\begin{proof}
Indeed, it is well-known that a representation of tube type carries a non-degenerate symplectic form \pe{Référence}. This implies that
$\V$ is self-dual, and therefore $\chi \in X(\V)$ if and only if $-\chi \in X(\V)$. This proves the corollary.
\end{proof}
}

\medskip

\begin{exem}
We give an example of this construction.
We assume that we are in the first arrow of the array (\ref{equa:tableau}), namely that $G$ has
type $A_{p+q-1}$ for some positive integers $p\leq q$ and that $\varpi = \varpi_{p-1}$.

Let
$y\in\fu_-$ be an element of rank $r$.
We have a natural decomposition $\C^{p+q}=N \oplus A \oplus I \oplus B$, with
$\ker y = N \oplus I \oplus B$ and $\Im y = I$. We choose a basis $(e_i)$ such that
$N$, resp. $A$, $I$, $B$ is generated by $(e_1,\ldots,e_{p-r})$, resp.
$(e_{p-r+1},\ldots,e_p)$, $(e_{p+1},\ldots,e_{p+r})$, $(e_{p+r+1},\ldots,e_{p+q})$.

The element $h \in \ft$ acts on the Lie algebra of $G$, and the corresponding weights of block matrices
from $N \oplus A \oplus I \oplus B$ to itself are
$
\left (
\begin{array}{cccc}
0 & -1 & 1 & 0 \\
1 & 0 & 2 & 1 \\
-1 & -2 & 0 & -1 \\
0 & -1 & 1 & 0
\end{array}
\right )
$.
Beware that with our choice of base $(e_i)$, the positive roots do \emph{not} correspond to matrix coordinates
above the diagonal.

The weights for the central element $z$ are
$
\left (
\begin{array}{cccc}
0 & 0 & 2 & 2 \\
0 & 0 & 2 & 2 \\
-2 & -2 & 0 & 0 \\
-2 & -2 & 0 & 0
\end{array}
\right )
$.
Thus the Lie algebras $\fq$, $\fl$ and $\fh$  are respectively
$$
\left (
\begin{array}{cccc}
* & * & * & * \\
0 & * & * & * \\
0 & * & * & * \\
0 & 0 & 0 & *
\end{array}
\right ),\quad
\left (
\begin{array}{cccc}
* & 0 & 0 & 0 \\
0 & * & * & 0\\
0 & * & * & 0 \\
0 & 0 & 0 & *
\end{array}
\right )\ \mbox{ and } \
\left (
\begin{array}{cccc}
* & 0 & 0 & 0 \\
0 & * & 0 & 0\\
0 & 0 & * & 0 \\
0 & 0 & 0 & *
\end{array}
\right ).
$$
Thus we see that $Q$ is exactly the stabilizer of the flag $(N \subset N \oplus A \oplus I)$, that
the subgroup of $G$ corresponding to $\fl$ is ${\rm S}(\GL(N)\times\GL(A\oplus I)\times\GL(B))$ and
$H$ is the block diagonal group ${\rm S}(\GL(N) \times \GL(A) \times \GL(I) \times \GL(B))$, and
finally that the intersections of $G_\R=\SU(p,q)$ with these two latter groups are  isomorphic to
${\rm S}(\U(p-r)\times\U(r,r)\times\U(q-r))$ and ${\rm S}(\U(p-r) \times \U(r) \times \U(r) \times
\U(q-r))$ respectively.

On the other hand, we have $\E=\wedge^p(N \oplus A \oplus I \oplus B)$ and it is easy to check that
$\V=\wedge^{p-r}N \otimes \wedge^r(A \oplus I)$, which confirms that the stabilizer of $\V$ is $Q$.

\begin{rema}\label{rem:dual}
    Given an element $y'$ in $\fu_+$ instead of $\fu_-$, one can consider the dual
    representation $\E^\vee$ of $G$, and construct as above a graded subspace
    $\V'=\oplus_{i=0}^{\rk y'}\,\V'_i$ of $\E^\vee =\oplus_{i=0}^{p}\,\E_i^\vee$
    (with $\V_i' \subset \E_i^\vee)$. It has the
    same properties as the subspace $\V\subset\E$ discussed above with the obvious modifications,
    e.g. the statement of
    Proposition~\ref{prop:algHiggs} for $\V'$ is that $y'\cdot\V_i'\subset \V'_{i+1}$ and
    $\fu_-\cdot\V'_i\subset\V'_{i-1}$.
  \end{rema}




\end{exem}

\section{Higgs bundles on foliated Kähler manifolds}
\label{sec:HiggsFoliated}

\subsection{Harmonic Higgs bundles}
\label{subsec:Higgs}\hfill\medskip

We keep the notation of the previous sections. In particular, $G_\R$ is a simple noncompact
Hermitian Lie group, $K_\R$ its maximal compact subgroup, $M=G_\R/K_\R$ the associated irreducible
Hermitian symmetric space of the noncompact type, $G$ and $K$ are the complexifications of $G_\R$
and $K_\R$, and  $\gg=\kg\oplus\ug$ and $\gg_\R=\kg_\R\oplus\ug_\R$ are the
associated Cartan decompositions of the Lie algebras of $G$ and $G_\R$.

\medskip

Let now $Y$ be a compact Kähler manifold, $\G=\pi_1(Y)$ its fundamental group, and $\r:\G\pfd G_\R$ be a
reductive representation (group homomorphism) of $\G$ into  $G_\R$.

In this case, by \cite{CorletteFlatGBundles},
there exists a $\r$-equivariant {\em harmonic} map $f$ from the universal cover $\tilde Y$ of $Y$ to the
symmetric space $M=G_\R/K_\R$ associated to $G_\R$.
The fact that $Y$ is Kähler and the nonpositivity of the complexified sectional curvature of $M$
imply by a Bochner formula due to~\cites{Sampson,Siu} that the map $f$ is {\em pluriharmonic} (i.e. its
restriction to 1-dimensional complex submanifolds of $Y$ is still harmonic), and that the image of its
(complexified) differential at every point $y\in Y$ is an Abelian subalgebra of $T^\C_{f(y)} M$
identified with $\ug$.

\medskip

By the work of C. Simpson, this
gives a {\em harmonic $G_\R$-Higgs principal bundle} $(P_K,\t)$ on $Y$. We will now briefly describe the
construction and the properties of such a Higgs bundle. Details and proofs can be found in the
original papers~\cites{S1,S2}.

There is a flat principal
bundle $P_{G_\R}\pfd Y$ of group $G_\R$ associated to the representation $\r$. The $\r$-equivariant map
$f:\tilde Y\pfd G_\R/K_\R$ defines a reduction of its structure group to $K_\R$, i.e. a principal
$K_\R$ bundle $P_{K_\R}\subset P_{G_\R}$. The differential of $f$ can be seen as a 1-form with
values in $P_{K_\R}(\ug_\R)$, the vector bundle associated to the adjoint action of $K_\R$ on
$\ug_\R$.

If we enlarge the structure group of $P_{K_\R}$ to $K$ and complexify the whole situation,
the pluriharmonicity of $f$ implies that the $K$-principal bundle $P_K\pfd Y$ is a holomorphic
bundle. Moreover, the  (complexified) differential
$d^{1,0}f:T^{1,0}Y\pfd T^\C M$
of the harmonic map $f$ defines a holomorphic section $\t$ of
$P_K(\ug)\otimes\Omega^1_Y$, where $P_K(\ug)$ is the holomorphic vector bundle associated to the
principal bundle $P_K$ and the adjoint representation of $K$ on $\ug$. The section $\t$ is called
the {\em Higgs field} and satisfies the integrability
condition $[\t,\t]=0$ as a section of $P_K(\ug)\otimes\Omega^2_Y$.  The couple $(P_K,\t)$ is called a {\em $G_\R$-Higgs principal bundle} on $Y$.

If now $\E$ is a (complex) representation of $G$ we can construct the associated holomorphic vector bundle
$E:=P_K(\E)$ over $Y$. Since $\E$ is a representation of $G$ and not only of $K$, the Higgs field
$\t$ can be seen as a holomorphic 1-form with values in the endomorphisms of $E$, i.e. a
section of $\End(E)\otimes\Omega^1_Y$.  The couple $(E,\t)$
is called a {\em $G_\R$-Higgs vector bundle} on $Y$. The harmonic map $f$, seen as a reduction of the
structure group of $P_K$ to the compact subgroup $K_\R$, together with a $K_\R$-invariant metric on
$\E$,  gives a Hermitian metric on $(E,\t)$
called the {\em harmonic metric}.

The existence of this harmonic metric and the fact that $P_K$ comes from a flat principal $G_\R$ bundle imply
that for any representation $\E$ of $G$, the associated Higgs vector bundle $(E,\t)\pfd Y$ is {\em Higgs
polystable} of degree 0, see~\cite{S1}.  To explain
what Higgs polystability means, we first define {\em Higgs subsheaves} of the Higgs bundle $(E,\t)$.
A coherent subsheaf $\F$ of $\cal O_Y(E)$ is a Higgs subsheaf if it is invariant by the Higgs field, i.e. it satisfies
$\t(\F)\subset\F\otimes\Omega^1_Y$. The Higgs vector bundle $(E,\t)$ is said to be Higgs stable if for any Higgs
subsheaf $\F$ of $(E,\t)$ such that $0<\rk\F<\rk E$, we have $\mu(\F)<\mu(E)$, where $\mu(\F)$ is
the slope of $\F$, i.e. its degree (computed w.r.t. the Kähler form of $Y$) divided by its rank. The
Higgs bundle $(E,\t)$ is Higgs polystable if it is a direct sum of Higgs stable Higgs vector bundles
of the same slope.
Note that here $E$ is flat as a $C^\infty$ bundle, so that its degree is zero.

\begin{rema}\label{rem:real}
Since moreover we assumed that $G_\R$ is a Hermitian group, then as a $K$-representation we have
$\ug=\ug_+\oplus\ug_-$
and the Higgs field $\t$ on the principal bundle $P_K$ (or on any associated vector bundle $E$) has two
components $\b\in P_K(\ug_-)\otimes\Omega^1_Y$ and  $\g\in P_K(\ug_+)\otimes\Omega^1_Y$. The
vanishing of one of these components means that the harmonic map $f$ is holomorphic or
antiholomorphic.
\end{rema}

\subsection{Harmonic Higgs bundles on foliated Kähler manifolds}\label{subsec:HiggsFol}\hfill\medskip

Assume now that the base Kähler manifold $Y$ of the harmonic $G_\R$-Higgs vector bundle
$(E,\t)\pfd Y$ of degree 0 admits a smooth holomorphic
foliation by complex curves and that this foliation $\T$ admits an invariant transverse measure.
Our goal in this section is to explain the interplay between the Higgs bundle and the
foliation. Details can be found in~\cite{KM}*{\textsection 2.2}.

We first weaken the notion of Higgs subsheaves of $(E,\t)$ to {\em leafwise Higgs subsheaves} by asking only an
invariance by the Higgs field along the leaves. More precisely  we now  consider the
Higgs field as a section of $\End(E)\otimes L^\vee$, where $L^\vee$ is the dual of the holomorphic
line subbundle $L$ of $T_Y$ tangent of the foliation $\T$. A leafwise Higgs subsheaf $\F$ of
$(E,\t)$ is then a subsheaf of $E$ such that  $\t(\F\otimes L)\subset \F$.

The invariant transverse measure gives a current of integration along
the leaves of $\T$. This allows to define the {\em foliated degree} $\deg_\T\F$ of a coherent sheaf $\F$ on $Y$ by
applying this current to the first Chern class of $\F$.

\medskip

The Higgs bundle enjoys the following leafwise polystability property w.r.t. the foliated degree
(\cite{KM}*{Proposition 2.2}):

\begin{prop}\label{prop:polystab}
  Assume that the invariant transverse measure comes from an invariant transverse volume form. Then the Higgs bundle $(E,\t)$ on the
foliated Kähler manifold $Y$ is {\em weakly polystable along the leaves} in the following sense:
\begin{enumerate}
\item it is {\em semistable along the leaves} of $\T$: if $\F$ is a leafwise Higgs
  subsheaf of $(E,\t)$, then $\deg_\T\F\leq 0$.
 \item if $\F$ is a saturated leafwise Higgs subsheaf of $(E,\t)$ such that $\deg_\T\F=0$, then the
  singular locus $\S(\F)$ of $\F$ is saturated under the foliation $\T$. Moreover, on $Y\backslash
  \S(\F)$, and if $F$ denotes the holomorphic subbundle of $E$ such that $\F=\cal
  O_{Y\backslash\S(\F)}(F)$ and $F^\perp$ its $C^\infty$
  orthogonal complement w.r.t. the harmonic metric, then $\t(F^\perp\otimes L)\subset F^\perp$ and
  for each leaf $\L$ of $\T$ such that $\L\cap\S(\F)=\emptyset$, $F^\perp$ is holomorphic on $\L$
  and $(E,\t)=(F,\t_{|F})\oplus(F^\perp,\t_{|F^\perp})$ is an holomorphic
  orthogonal decomposition on $\L$.
\end{enumerate}
\end{prop}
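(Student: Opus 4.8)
The plan is to run Simpson's curvature argument for harmonic bundles, but contracting everything in the leaf direction and integrating against the transverse measure. The only analytic input needed is the harmonic metric $h$ on $E$: because the underlying $C^\infty$ bundle is flat, the curvature $R_h$ of the Chern connection of $h$ satisfies the Hitchin--Simpson equation $R_h+[\t,\t^*]=0$, where $\t^*$ is the adjoint of $\t$ with respect to $h$. Since the saturation $\bar\F$ of $\F$ is again a leafwise Higgs subsheaf and satisfies $\deg_\T\F\le\deg_\T\bar\F$, it suffices to prove both assertions for saturated $\F$; so I assume $\F$ saturated and work on the open set $Y\setminus\S(\F)$, where $\F=\cal O(F)$ for an honest holomorphic subbundle $F\subset E$. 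Write $\pi$ for the $h$-orthogonal projection onto $F$, $\t_L$ for the leafwise Higgs field (a section of $\End(E)\otimes L^\vee$), and $\beta=(1-\pi)\,\bar\partial\pi$ for the leafwise second fundamental form of $F$. The leafwise Higgs invariance of $\F$ reads $(1-\pi)\,\t_L\,\pi=0$.

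Next I would express the foliated degree through Chern--Weil. Computing the first Chern form of $(F,h|_F)$ by Gauss--Codazzi, contracting along $L$, and pairing with the current of integration along the leaves furnished by the transverse volume form yields
$$\deg_\T F=\frac{1}{2\pi}\int_Y\Big(\sqrt{-1}\,\tr(\pi R_h)\big|_\T-|\beta|^2\Big),$$
the integral being taken against the leaf current. Substituting $R_h=-[\t,\t^*]$ and using only $\pi^2=\pi=\pi^*$ together with $(1-\pi)\t_L\pi=0$, a one-line trace computation gives $\sqrt{-1}\,\tr(\pi R_h)\big|_\T=-\|\pi\,\t_L(1-\pi)\|^2$, whence
$$\deg_\T F=-\frac{1}{2\pi}\int_Y\Big(\|\pi\,\t_L(1-\pi)\|^2+|\beta|^2\Big)\le 0.$$
This is the semistability statement (1).

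For (2), since the transverse measure has full support and the integrand above is pointwise non-positive, $\deg_\T\F=0$ forces both $\beta=0$ and $\pi\,\t_L(1-\pi)=0$ on $Y\setminus\S(\F)$. The first vanishing says that $F^\perp$ is holomorphic along every leaf; combined with the two relations $(1-\pi)\t_L\pi=0$ and $\pi\t_L(1-\pi)=0$, which say that $\t$ preserves both $F$ and $F^\perp$ along the leaves, this produces the claimed leafwise orthogonal holomorphic decomposition $(E,\t)=(F,\t_{|F})\oplus(F^\perp,\t_{|F^\perp})$. Finally, these same vanishings mean precisely that $\pi$ is parallel along the leaves for the flat leafwise connection $\nabla^h|_L+\t+\t^*$; a bounded parallel projection extends across the discrete trace of $\S(\F)$ on any leaf meeting $Y\setminus\S(\F)$, so such a leaf is disjoint from $\S(\F)$, and $\S(\F)$ is saturated.

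The main obstacle is not the algebra above but the analytic bookkeeping hidden in the Chern--Weil formula for a subsheaf that is only a subbundle off $\S(\F)$: one must check that the curvature representative of the foliated first Chern class is integrable against the leaf current and that $\S(\F)$, of codimension $\ge 2$ because $\F$ is saturated, contributes no boundary term, and one must make the leafwise extension of $\pi$ rigorous (single-valuedness along leaves, negligibility of the leafwise trace of $\S(\F)$). These are exactly the points established in the foliated version of Simpson's theory in \cite{KM}*{\textsection 2.2}; once they are in place, the curvature positivity computation yielding both (1) and (2) is routine.
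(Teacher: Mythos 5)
Your strategy is the same as the paper's: for this proposition the paper offers no independent argument and simply cites \cite{KM}*{Proposition 2.2}, whose proof is exactly the Simpson-type computation you outline (harmonic metric, the equation $R_h=-[\theta,\theta^*]$, a leafwise Gauss--Codazzi/Chern--Weil identity paired with the leaf current, and propagation of the equality-case vanishings along leaves). However, there is a concrete error in your formula. You define the ``leafwise second fundamental form'' as $\beta=(1-\pi)\bar\partial\pi$. On $Y\setminus\mathcal{S}(\mathcal{F})$ the sheaf $\mathcal{F}$ is the sheaf of sections of a \emph{holomorphic} subbundle $F$, and the identity $(1-\pi)\bar\partial\pi=0$ is precisely equivalent to the holomorphicity of $F$ (it is the Uhlenbeck--Yau weak holomorphicity condition); so your $\beta$ vanishes identically, irrespective of any hypothesis on $\deg_{\mathcal T}\mathcal{F}$. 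The quantity that must enter the formula is the genuine second fundamental form $A=(1-\pi)\partial_h\pi$, a $(1,0)$-form with values in $\mathrm{Hom}(F,F^\perp)$ --- equivalently $\bar\partial\pi$ itself, since $\bar\partial\pi=A^{*}$ and $|\bar\partial\pi|=|A|$ --- contracted in the leaf direction. The correct identity is
$$
\deg_{\mathcal T}F=-\frac{1}{2\pi}\int_{Y}\Bigl(\bigl\|\pi\,\theta_L(1-\pi)\bigr\|^{2}+\bigl|A_L\bigr|^{2}\Bigr)
$$
integrated against the leaf current.

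For assertion (1) the slip costs nothing: the term you lost is non-positive, so $\deg_{\mathcal T}\mathcal{F}\le 0$ still follows from your trace computation (which is correct). For assertion (2) it is fatal as written: from $\deg_{\mathcal T}\mathcal{F}=0$ you must extract \emph{both} vanishings, $\pi\theta_L(1-\pi)=0$ \emph{and} $A_L=0$; it is the second one that makes $F^\perp$ holomorphic along the leaves and makes $\pi$ parallel for the leafwise flat connection $\nabla_h|_L+\theta_L+\theta_L^{*}$, which is what your extension argument for the saturation of $\mathcal{S}(\mathcal{F})$ rests on. With your $\beta$, the conclusion ``$\beta=0$'' is vacuous and none of this follows. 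The repair is routine---replace $\beta$ by $A_L$, or by $(\bar\partial\pi)|_L$---and with it your argument coincides with the proof of \cite{KM}*{Proposition 2.2}; note that deferring the remaining analytic bookkeeping (Chern--Weil across the codimension-$\geq 2$ set $\mathcal{S}(\mathcal{F})$, single-valuedness of the leafwise parallel extension) to \cite{KM}*{\textsection 2.2} is not an additional gap relative to the paper, which cites that same source for the entire proposition.
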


(In the proposition the {\em singular locus} $\S(\F)$ of a subsheaf $\F$ of $\cal O_Y(E)$ is the
complement of the
subset of $Y$ where $\F$ is the
  sheaf of sections of a subbundle $F$ of $E$. Equivalently, it is the subset of $Y$ where $\cal
  O_Y(E)/\F$ is not locally free. If $\F$ is saturated $\S(\F)$ has codimension at least 2 in $Y$.)

\subsection{The tautological foliation on the projectivized tangent bundle of a complex hyperbolic
  manifold}\label{subsec:toto} \hfill\medskip

A $n$-dimensional complex hyperbolic manifold $X$ is a quotient of the complex hyperbolic $n$-space
$\H=\SU(1,n)/\U(n)$  by a discrete torsion free subgroup $\G$ of $\SU(1,n)$. It is a Hermitian
locally symmetric space of rank 1.

The complex hyperbolic space $\H$ is an open subset in the projective space $\CP^n$: it's the subset of
negative lines in $\C^{n+1}$ for an Hermitian form of signature $(n,1)$. Intersections of lines
of $\CP^n$ with $\H$ are totally geodesic complex subspaces of $\H$ isometric to the Poincar\'e
disc. They are called {\em complex geodesics}. The space $\cal G$ of complex geodesics is the
complex homogeneous space $\SU(1,n)/{\rm S}(\U(1,1)\times \U(n-1))$.

The {\em projectivized tangent bundle} of $\H$ is the complex homogeneous space $\PTH=\SU(1,n)/{\rm
  S}(\U(1)\times\U(1)\times \U(n-1))$. The natural $\SU(1,n)$-equivariant fibration $\pi_{\cal
  G}:\PTH\pfd\cal G$ which associates to a tangent line to $\H$ the complex geodesic it defines is a
disc bundle over $\cal G$.

By $\SU(1,n)$-equivariance, this fibration endows the projectivized tangent bundle
$\PTX=\G\backslash\PTH$ of $X=\Gamma\backslash\H$ with a smooth holomorphic foliation $\T$ by
complex curves, whose
leaves are given by the tangent spaces of the (immersed) complex geodesics in $X$. This foliation is
called the {\em tautological foliation} of $\PTX$ because the tangent line bundle $L$ to the leaves is
naturally isomorphic to the tautological line bundle $\OX$ on $\PTX$.

The space $\cal G$ of complex geodesics of $\H$ is a pseudo-Kähler manifold: it admits a
non-degenerate but indefinite Kähler form $\o_{\cal G}$. This form defines a transverse invariant volume form
for the tautological foliation $\T$ on $\PTX$, and the associated notion of foliated degree
$\deg_\T$ for sheaves on $\PTX$ has the following fundamental property
\cite{KM}*{Proposition~3.1}:
\begin{prop}\label{prop:degT}
Assume that $X=\G\backslash\H$ is compact and let $\pi:\PTX\pfd X$ be the projectivized tangent
bundle of $X$. If
$\F$ is a coherent $\cal O_X$-sheaf, then $\deg_\T(\pi^\star\F)=\deg\F$, where $\deg\F$ is the usual
degree of $\F$ computed w.r.t. the Kähler form on $X$ induced by the $\SU(1,n)$-invariant Kähler
form on $\H$.
\end{prop}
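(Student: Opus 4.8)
The plan is to reinterpret both degrees as the pairing of $c_1(\F)$ against a closed current, and to compare these currents through the projection formula for $\pi$. Since the degree of a coherent sheaf depends only on its first Chern class and is additive in short exact sequences, and since both $\dego$ and $\deg$ are linear in $c_1$, it is enough to prove the identity for the class $c_1(\F)\in H^{1,1}(X)$ alone. Recall that the foliated degree is computed by the closed positive current $C_\T$ of bidimension $(1,1)$ on $\PTX$ obtained by integrating along the leaves of $\T$ against the invariant transverse volume form induced by $\o_{\cal G}$; choosing a smooth representative of $c_1(\F)$, we have $\dego(\pi^\star\F)=\langle C_\T,\pi^\star c_1(\F)\rangle$. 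By definition of the pushforward of currents under the proper fibration $\pi:\PTX\pfd X$, this equals $\langle \pi_\star C_\T,\,c_1(\F)\rangle$, where $\pi_\star C_\T$ is a closed $(n-1,n-1)$-current on $X$. Everything in sight being inherited from the $\SU(1,n)$-homogeneous picture on $\PTH$, the whole problem is thereby reduced to identifying the single current $\pi_\star C_\T$.

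First I would lift the situation to the universal cover. The foliation $\T$, the tautological line bundle $L\simeq\OX$, and the transverse measure all descend from $\SU(1,n)$-invariant objects on $\PTH$; hence the lift of $C_\T$ is the $\SU(1,n)$-invariant leafwise current on $\PTH$, and $\widetilde{\pi_\star C_\T}$ is an $\SU(1,n)$-invariant, closed, positive $(n-1,n-1)$-current on $\H$. By homogeneity an invariant current on a homogeneous space is represented by an invariant smooth form, so $\widetilde{\pi_\star C_\T}$ is in fact a smooth $\SU(1,n)$-invariant $(n-1,n-1)$-form. Since $\H=\SU(1,n)/\U(n)$ is an irreducible Hermitian symmetric space of rank one, the space of invariant real $(n-1,n-1)$-forms is one-dimensional, spanned by $\o^{n-1}$. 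Therefore $\pi_\star C_\T=c\,\o^{n-1}$ on $X$ for some universal constant $c>0$ depending only on $n$ and on the normalization of $\o_{\cal G}$.

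It remains to compute the constant $c$, and this I expect to be the main obstacle. Working locally over a point $x\in\H$, and unwinding the definition of the transverse current, one has for a test $(1,1)$-form $\phi$
$$
\langle \pi_\star C_\T,\phi\rangle=\int\Big(\int_{\ell}\pi^\star\phi\Big)\,d\nu(\ell),
$$
where $\ell$ runs over the leaves through the fiber $\pi^{-1}(x)$ and $\nu$ is the transverse measure $\o_{\cal G}^{\,2n-2}/(2n-2)!$; since $\pi$ maps each leaf isomorphically onto the complex geodesic it covers, $\int_\ell\pi^\star\phi$ is the integral of $\phi$ over that complex geodesic. Identifying the fiber $\pi^{-1}(x)=\P(T_x\H)\simeq\P^{n-1}$ with the space of directions of geodesics through $x$, I would express the restriction of the invariant transverse measure near these geodesics in terms of the Fubini--Study measure on $\P(T_x\H)$ and carry out the resulting integral using the normalization of $\o$ (constant holomorphic sectional curvature $-1$). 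This pins down $c$; by the choice of $\o_{\cal G}$ compatible with the symmetric space metric, it equals the normalizing constant of the ordinary degree, giving $\pi_\star C_\T=\o^{n-1}$ (up to that fixed factor) and hence $\dego(\pi^\star\F)=\deg\F$.

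The delicate points are thus exactly these last ones: matching the transverse measure on the space of complex geodesics with the Fubini--Study measure on the projectivized tangent space, and checking that the two normalizations agree. The earlier steps---linearity in $c_1$, the projection formula for currents under the proper map $\pi$, and the passage from an invariant current to a smooth invariant form---are formal, and the one-dimensionality of invariant $(n-1,n-1)$-forms on the rank-one space $\H$ is what makes the proportionality $\pi_\star C_\T=c\,\o^{n-1}$ automatic once invariance is established.
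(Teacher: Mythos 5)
The formal skeleton of your argument is sound, and it is in fact the same strategy that underlies the proof in \cite{KM} that the paper cites for Proposition~\ref{prop:degT}: reduce to $c_1(\F)$, write the foliated degree as the pairing of $\pi^\star c_1(\F)$ with the closed positive (Ruelle--Sullivan) current $C_\T$ of integration along the leaves, push forward by the proper map $\pi$, and use $\SU(1,n)$-invariance to conclude that $\pi_\star C_\T$ is a smooth invariant form, hence a multiple $c\,\o^{n-1}$ (one-dimensionality of invariant $(n-1,n-1)$-forms holds here because $\U(n)$-invariant forms on $\C^n$ are generated by the Kähler form). All of these steps are correct.

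The genuine gap is that you stop exactly where the content of the proposition begins: the computation showing $c=1$. Invariance alone only gives $\deg_\T(\pi^\star\F)=c\deg\F$ for some universal $c>0$, and your closing assertion that ``by the choice of $\o_{\cal G}$ compatible with the symmetric space metric, it equals the normalizing constant of the ordinary degree'' assumes the conclusion: $\o_{\cal G}$ comes with a fixed normalization, and whether the transverse measure $\o_{\cal G}^{2n-2}/(2n-2)!$ integrates out to exactly $\o^{n-1}/(n-1)!$ is precisely what must be verified. This is not a cosmetic issue, because the paper uses the equality (not a proportionality) in a normalization-sensitive way: \eqref{equa:toledo} and \eqref{equa:deg-L} convert Theorem~\ref{theo:MW} into the Milnor--Wood inequality with the sharp constant $\rk M$, and any $c\neq 1$ would rescale that bound. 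The missing computation is the heart of the cited proof: one expands $\pi_{\cal G}^\star\o_{\cal G}$ in the two-dimensional space of invariant $(1,1)$-forms on $\PTH$ spanned by $\pi^\star\o$ and the curvature form of the tautological bundle $L$, pins down the two coefficients by restricting to the leaves (where $\pi_{\cal G}^\star\o_{\cal G}$ vanishes and $L$ is the tangent bundle of a curvature $-1$ disc) and to the fibers of $\pi$ (where the restriction of the curvature of $L$ is a Fubini--Study form), and then fiber-integrates $(\pi_{\cal G}^\star\o_{\cal G})^{2n-2}/(2n-2)!$, which brings in Segre-type terms $\pi_\star$ of powers of the curvature of $L$. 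Until that calculation (or an equivalent one) is carried out, the proposal proves strictly less than the proposition.
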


\section{The Milnor-Wood inequality}\label{sec:MW}

Let $X=\G\backslash\H$ be a compact complex hyperbolic manifold of (complex) dimension $n$ and $\r$
be a representation of $\G$ in a simple Hermitian Lie group $G_\R$, whose associated
symmetric space is called $M$. In this section we use
the material developped or recalled in Sections~\ref{sec:subnil} and~\ref{sec:HiggsFoliated}
to prove the Milnor-Wood inequality
$$
|\tau(\rho)|\leq \rk M\,\vol(X).
$$
Recall that we may and do assume that the representation $\rho$ is reductive (see
the discussion just before Assumption~\ref{assu:reductive}).

\subsection{Setup}\hfill\medskip\label{subsec:setup}


\def\tb{\tilde\beta}
\def\tE{\tilde E}
\def\tP{\tilde P}

Consider the representation $\E$ of $G$ defined in \textsection \ref{subsec:minuscule-rep}. As explained
in \textsection\ref{subsec:Higgs}, this gives rise to a flat
harmonic $G_\R$-Higgs vector bundle $(\ovE,\ovt)$ over $X$.
   As a representation of $K$,  $\E=\oplus_{i=0}^p\E_{i}$ where $p$ is the real rank of $G_\R$. This
means that the Higgs bundle $\ovE$ admits the holomorphic decomposition
$\ovE=\oplus_{i=0}^p \ovE_{i}$. Moreover, the components $\ovb\in P_K(\ug_-)$ and $\ovg\in P_K(\ug_+)$ of
the Higgs field $\ovt\in \Hom(\ovE,\ovE)\otimes\Omega^1_X$ (see Remark~\ref{rem:real}) satisfy
$$
\ovb\in\bigoplus_{i=0}^{p-1}\,\Hom(\ovE_{i},\ovE_{i+1})\otimes \Omega^1_X\mbox{ and }
\ovg\in\bigoplus_{i=0}^{p-1}\,\Hom(\ovE_{i+1},\ovE_{i})\otimes \Omega^1_X.
$$

We pull-back the harmonic Higgs bundle $(\ovE,\ovt)\pfd X$ to the projectivized tangent bundle
$\PTX$ of $X$ to obtain a harmonic Higgs bundle that
we denote by $(E,\t)\pfd\PTX$. We restrict the Higgs field $\t$ to the tangent space $L$ of the tautological
foliation on
$\PTX$, so that its components $\b$ and $\g$ satisfy
$$
\b\in\bigoplus_{i=0}^{p-1}\,\Hom( E_{i}, E_{i+1})\otimes L^\vee\mbox{ and }
\g\in\bigoplus_{i=0}^{p-1}\,\Hom( E_{i+1}, E_{i})\otimes L^\vee
$$

\begin{defi}
For $\xi\in\PTX$, the rank $\rk\b_\xi$ of $\b_\xi$ is the largest value of $k$
such that $(\b_\xi)^k:E_0\otimes L^k\pfd E_{k}$  is not zero.

The generic rank $\rk\b$
of $\b$  is the maximum of the ranks of $\b_\xi$ for  $\xi\in\PTX$.

The singular locus of $\b$ is the following subset of $\PTX$:
$$
\S(\b): =\{\xi\in\PTX\,\mid\,
\rk\b_\xi<\rk\b\}=\{\xi\in\PTX\,\mid\,(\b_\xi)^{\rk\b}: E_0\otimes L^r\pfd  E_{r}\mbox{ vanishes}\}
$$

The regular locus of $\b$ is  $\cal R(\b):=\PTX\backslash\S(\b)$.

 The singular locus $\S(\ovb)$ of $\ovb$ is the projection to $X$ of $\S(\b)$.

The regular locus $\cal R(\ovb)$ of $\ovb$ is $X\backslash\S(\ovb)$.
\end{defi}

\begin{defi}
We define similarly $\rk\g$, $\S(\g)$, ${\cal R}(\g)$, $\S(\ovg)$ and
${\cal R}(\ovg)$, except that we consider the dual representation to define the rank
of $\g_\xi$ for $\xi\in\PTX$:  $\rk \g_\xi$ is the largest value of $k$
such that  $({}^t\g_\xi)^k :E_0^\vee\otimes L^k\pfd E_{k}^\vee$  is not zero.
\end{defi}

Observe that while $\S(\b)$ and $\S(\g)$  are analytic subsets of $\PTX$ of codimension at least 1,
$\S(\ovb)$ and $\S(\ovg)$ are analytic subsets of $X$ (because $\pi:\PTX\pfd X$ is a proper map) but
they might be equal to $X$.

\subsection{Rewording of the inequality}\label{subsec:reword}\hfill\medskip

Since the Hermitian symmetric space $M$ associated to
$G_\R$ is a Kähler-Einstein manifold, the first Chern
form $c_1(T_M)$ of its tangent bundle is a constant multiple of the $G_\R$-invariant Kähler form $\o_M$:
$c_1(T_M)=- \frac 1{4\pi}\,c_M\,\o_M$ for some positive constant $c_M$. On the other hand, the line
bundle $\cal L$
associated to the $K$-representation $\E_0$ is
a generator of the Picard group of the compact dual $M^\vee$ of $M$  and it can be checked that the
canonical bundle
$K_{M^\vee}$ of $M^\vee$ is precisely given by ${\cal L}^{\otimes c_M}$, see
e.g. \cite{KMgeneraltype}*{Section 2}.
Therefore the pull-back
$f^\star\o_M$ is $4\pi$ times the first Chern form of the line bundle $f^\star\cal L=\ovE_0$, so that the Toledo
invariant of $\rho$ is
\begin{equation}
 \label{equa:toledo}
 \tau(\rho)=4\pi\,\deg (\ovE_0)=4\pi\,\deg_\T(E_0),
\end{equation}
where the last equality
follows from Proposition~\ref{prop:degT}. Similarly, we get that
\begin{equation}
 \label{equa:deg-canonique}
 \deg(K_X)=\frac{n+1}{4\pi}\vol (X).
\end{equation}

On the other hand, if $L$ is the tangential line bundle to the tautological foliation $\T$ on
the projectivized tangent bundle $\PTX$, and if $L^\vee$ is the dual line bundle, one can compute
as explained in~\cite{KM}*{Section 4.3.1} that
\begin{equation}
 \label{equa:deg-L}
 \deg_\T(L^\vee) = \frac{1}{2\pi} \vol(X).
\end{equation}

Therefore, the Milnor-Wood inequality can be rephrased as an inequality between the foliated degrees
of the line bundles $E_0$ and $L^\vee$ on $\PTX$:
\begin{equation}
\label{equa:rewording}
\left | \deg_\T(E_0) \right |  \leq \frac p2 {\deg_\T(L^\vee)},
\end{equation}
where $p$ is the rank of the symmetric space $M$.


\subsection{Leafwise Higgs subsheaves associated to the components of the Higgs field} \hfill\medskip
\label{subsec:HiggsSubsheaf}

We now define a subsheaf
$\cal V$ of $\cE:=\cal O( E)$ associated associated to
$\b$ in the same way we
defined the submodule $\V$ of $\E$ associated to the nilpotent element $y\in\fu_-$
in Definition \ref{defi:V} (for all $\xi\in L$, $\b(\xi)\in P_K(\fu_-)$ is a nilpotent endomorphism
of the bundle $E$). This subsheaf will be shown to be a leafwise Higgs subsheaf of the Higgs bundle
$(E,\t)$ on $\PTX$. In Section~\ref{subsec:proofMW} this will be used to prove the Milnor-Wood inequality.

More precisely, we follow the alternative definition of $\V$ given after Definition~\ref{defi:V} and
for $k=r,r-1,\ldots,-r+1,-r$, we consider the following subsheaves of $\cE$:
$$
\F_k:=\sum_{\begin{array}{c}
       \ell \geq 0 \\
       k+\ell+1 \geq 0
      \end{array}}
\Ker \b^{k+\ell+1} \cap \Im \b^\ell
$$
where in order to define $\Ker \b^j$ we see $\b$ as a sheaf morphism from $\cE$
to $\cE\otimes {(L^\vee)}^j$ and to define $\Im \b^j$ we see $\b$ as a sheaf
morphism from $\cE\otimes L^j$ to $\cE$.


\smallskip

For $k=0,1,\ldots r$, let $\cal V_k$ be the saturation in $\cE_k:=\cal O( E_k)$ of the
subsheaf  $\cE_k\cap \F_{r-2k}$ and let $\cV=\oplus_{0\leq k\leq r} \cV_k$.

\medskip

Since the sheaves $\cal V_k$ are saturated subsheaves of $\cal O( E_k)$, they exits a {\em big} open
subset $\cal U$ of $\PTX$ (an open subset $\cal U$ of $\PTX$ is big if  $\codim\,
\PTX\backslash\cal U\geq 2$) and subbundles $V_k$ of $ E_k$ defined on $\cal U$ such that the
restriction of the $\cV_k$'s to $\cal U$ are the sheaves of
sections of the $V_k$'s. On $\cal U$
we let $V$ be the subbundle $\oplus_{0\leq k\leq r} V_k$, so that $\cV_{|\cal U}={\cal
  O}_{\cal U}(V)$.

Observe that on the regular locus
$\cal R(\b)$ of $\b$, the rank of $\b^k$, as a vector bundle morphism from $E\otimes
L^k$ to $E$, is constant. Hence on this open subset the formulas used above to define the
subsheaves $\F_k$ of $\cE$ in fact define subbundles $F_k$ of $E$ such that ${\F_k}_{|{\cal R}(\b)}=
\cal O_{\cal R(\b)}(F_k)$. Therefore, on $\cal R(\b)$, the 
  subbundles $V_k$ such that ${\cV_k}_{|{\cal R}(\b)}=\cal O(V_k)$ are given by $V_k=E_k\cap F_{r-2k}$
and we may assume that $\cal R(\b)$ is contained and dense in $\cal U$.

\medskip

\begin{lemm}
  On the big open set $\cal U$, the subsheaf $\cV$ defines a reduction $P_{K\cap Q}$ of the
  structure group of $P_K$ to the subgroup $K\cap Q\subset K$.
\end{lemm}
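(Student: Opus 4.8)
The plan is to interpret the subbundle $V=\oplus_k V_k\subset E$ produced on $\cal U$ as a section of an associated flag bundle, and to check that this section takes values in the single $K$-orbit of subspaces whose stabiliser is $K\cap Q$; such a section is exactly a reduction of structure group. First I would record the needed algebraic input. Since $K$ preserves the grading $\E=\oplus_i\E_i$, it acts on the product of Grassmannians $\prod_i\mathrm{Gr}(\dim\V_i,\E_i)$, and the point $[\V]:=(\V_0,\dots,\V_r)$ has stabiliser $\mathrm{Stab}_K([\V])=\bigcap_i\mathrm{Stab}_K(\V_i)=\mathrm{Stab}_K(\V)=K\cap Q$ by Proposition~\ref{prop:stabV}(\ref{item:stabilizer-V}), using that stabilising the graded subspace $\V$ is the same as stabilising each $\V_i$. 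Moreover, by Table~\ref{equa:tableau-groupes} we have $\fk\cap\fq=\ft\oplus\bigoplus_{\scal{\alpha,z}=0,\ \scal{\alpha,h}\le 0}\fg_\alpha$, which contains for every compact root $\alpha$ at least one of $\fg_{\pm\alpha}$; hence $\fk\cap\fq$ is a parabolic subalgebra of $\fk$ with Levi $\fh$, so $K\cap Q$ is a parabolic subgroup of $K$ and the orbit $K\cdot[\V]\cong K/(K\cap Q)$ is a complete flag variety. In particular $K\cdot[\V]$ is closed in $\prod_i\mathrm{Gr}(\dim\V_i,\E_i)$, a fact I will exploit to pass from $\cal R(\b)$ to all of $\cal U$.

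Next I would identify the fibres of $V$ on the dense regular locus $\cal R(\b)\subset\cal U$. The filtration $\bbF_\bullet(y)$ of~(\ref{equa:filtration-y}) is manifestly $K$-equivariant, $\bbF_k(\Ad(k')\,y)=k'\cdot\bbF_k(y)$, since it is built from kernels and images of powers of $y$; as the $\E_i$ are $K$-stable, the assignment $y\mapsto\V(y)=\oplus_i\big(\E_i\cap\bbF_{r-2i}(y)\big)$ is $K$-equivariant as well. Now fix $\xi\in\cal R(\b)$, a frame $p\in(P_K)_\xi$ trivialising $E_\xi\cong\E$ compatibly with the grading, and a generator of $L_\xi$; then $\b_\xi$ becomes $\varphi(y_\xi)$ for some $y_\xi\in\fu_-$ of rank $r=\rk\b$. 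Because the rank of $\b_\xi$ is constant on $\cal R(\b)$, the sheaves $\F_k$ restrict there to the subbundles $F_k$ with fibre $\bbF_k(y_\xi)$, whence $(V_k)_\xi=(E_k)_\xi\cap F_{r-2k}\cong\E_k\cap\bbF_{r-2k}(y_\xi)=\V_k(y_\xi)$. Since $y_\xi$ has rank $r$, the orbit description of Subsection~\ref{subsec:DOS} furnishes $k'\in K$ with $\Ad(k')\,y_\xi=y$, so $V_\xi=\varphi(k')^{-1}\V$ lies in the $K$-orbit of $\V$; invariantly, the section $s\colon\cal U\to P_K\times_K\prod_i\mathrm{Gr}(\dim\V_i,\E_i)$ determined by $V$ lands in the orbit subbundle $P_K\times_K(K\cdot[\V])$ over $\cal R(\b)$.

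Finally I would extend this to all of $\cal U$. The ranks of the $V_k$ are constant on $\cal U$, equal to their generic values $\dim\V_k$ attained on the dense open $\cal R(\b)$, so $s$ is defined on all of $\cal U$; and because the orbit subbundle has closed fibre $K\cdot[\V]$, the locus $\{\xi\in\cal U: s(\xi)\in P_K\times_K(K\cdot[\V])\}$ is closed in $\cal U$. Containing the dense subset $\cal R(\b)$, it is therefore all of $\cal U$. A section over $\cal U$ of $P_K\times_K\big(K/(K\cap Q)\big)=P_K/(K\cap Q)$ is exactly a reduction $P_{K\cap Q}$ of the structure group of $P_K$ to $K\cap Q$, which is the assertion. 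The main obstacle is precisely this last extension: a priori the fibre $V_\xi$ for $\xi\in\cal U\setminus\cal R(\b)$ could degenerate into a different (smaller) $K$-orbit of graded subspaces, and it is exactly the parabolicity of $K\cap Q$ — equivalently the completeness of $K\cdot[\V]$ — that rules this out.
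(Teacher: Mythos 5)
Your proof is correct and follows essentially the same route as the paper's: on $\cal R(\b)$ you use constancy of the rank and Proposition~\ref{prop:stabV}~(\ref{item:stabilizer-V}) to place $V_\xi$ in the $K$-orbit of $\V$, and you then extend the reduction to all of $\cal U$ by a density-plus-closedness argument, the closedness coming from the completeness of $K/(K\cap Q)$. The only (cosmetic) differences are that you realize the orbit inside the product of Grassmannians $\prod_i\mathrm{Gr}(\dim\V_i,\E_i)$ and justify the parabolicity of $K\cap Q$ in $K$ explicitly, whereas the paper embeds $K/(K\cap Q)$ into $\GL(\E)/S$ via local trivializations and simply invokes its compactness.
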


\begin{proof}
We begin by working on $\cal R(\b)\subset\cal U$. We view an element $p$ of $P_K$ above
$\xi\in\PTX$ as an isomorphism between the fiber $ E_\xi$ of $ E=P_K(\E)$ and the model space $\E$.
The component $\b$ of the Higgs field is a section of $P_K(\fu_-)\otimes L^\vee\subset
P_K(\End(\E))\otimes L^\vee$.

Since for 
all $\xi\in\cal R(\b)$ and all $\eta\in L_\xi$, $\eta\neq 0$, we have that $\b_\xi(\eta)$ has rank
$r$, there exists $p\in (P_K)_\xi$ such that $p\circ\b_\xi(\eta)\circ p^{-1}=y\in\fu_-\subset\End (\E)$, so
that $p (V_\xi)=\V$. Therefore, on
$\cal R(\b)$, by Proposition~\ref{prop:stabV}~(\ref{item:stabilizer-V}),
the subbundle $V$ of $ E$ defines a (holomorphic) reduction $P_{K\cap Q}$ of the
structure group of $P_K$ to the subgroup $K\cap Q$ of $K$ ($Q$ is the normalizer in $G$ of the
parabolic subalgebra $\fq$ of Definition~\ref{defi:V}). Explicitely $P_{K\cap Q}=\{p\in P_K\,\mid\,
p( V_{\pi(p)})=\V \}$.

We now work on $\cal U$. Enlarge the structure group of $P_K$ to $\GL(\E)$. The subbundle
$V=\oplus_{k=0}^r V_k$ of $ E$ defines a reduction $P_S$ of the structure group of $P_{\GL(\E)}$ to the
stabilizer $S$ of $\V$ in $\GL(\E)$ by setting $P_S=\{p\in P_{\GL(\E)}\,\mid\, p (V_{\pi(p)})=\V
\}$.

Let $B\subset\cal U$ be an open ball on which $P_K$ is trivial. Then the reductions $P_{K\cap Q}$ of
$P_K$ on $B\cap\cal R(\b)$
and $P_S$ of $P_{\GL(\E)}$ on $B$ are respectively given by holomorphic maps $\sigma:B\cap\cal R(\b)\pfd
K/(K\cap Q)$ and $s:B\pfd \GL(\E)/S$. Moreover, if $\iota$ denotes the natural map $K/(K\cap Q)\pfd
\GL(\E)/S$, which is injective, then we have $s=\iota\circ\sigma$ on $B\cap\cal R(\b)$. Since
$K/(K\cap Q)$ is compact, its image by $\iota$ is closed in $\GL(\E)/S$. Therefore, since $B\cap\cal
R(\b)$ is dense in $B$, $s$ maps $B$ to $\iota(K/(K\cap Q))$. This means that the reduction
$P_{K\cap Q}$ initially defined on $\cal R(\b)$ extends to $\cal U$.
\end{proof}

We deduce that

\begin{prop}\label{prop:VHiggs}
The subsheaf $\cV$ is a leafwise Higgs subsheaf of the Higgs bundle $(E,\t)$ on $\PTX$.
\end{prop}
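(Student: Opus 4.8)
The plan is to verify the defining property of a leafwise Higgs subsheaf, namely the inclusion $\t(\cV\otimes L)\subset\cV$, where $\t=\b+\g$ is decomposed into its $P_K(\fu_-)$- and $P_K(\fu_+)$-components. The strategy is to check this inclusion first on the dense open set $\cal R(\b)$, where the preceding lemma provides the reduction $P_{K\cap Q}$ and the fiberwise picture matches the algebraic model of Section~\ref{sec:subnil} exactly, and then to propagate it across $\S(\b)$ using that $\cV$ is saturated. Recall that the grading is compatible with the twists: $\b$ maps $E_k\otimes L$ to $E_{k+1}$ and $\g$ maps $E_k\otimes L$ to $E_{k-1}$, which is exactly the shape of Proposition~\ref{prop:algHiggs}.

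First I would fix $\xi\in\cal R(\b)$, a nonzero $\eta\in L_\xi$, and a frame $p\in(P_{K\cap Q})_\xi$ identifying $E_\xi$ with $\E$ and the fiber $V_\xi$ with the submodule $\V\subset\E$. For the $\g$-component the conclusion is immediate and frame-independent: $p\circ\g_\xi(\eta)\circ p^{-1}$ lies in $\fu_+$, and every root $\alpha$ of $\fu_+$ satisfies $\scal{\alpha,z}=2\geq\scal{\alpha,h}$ by Proposition~\ref{prop:h}, so $\fu_+\subset\fq$ and hence $\fu_+$ preserves $\V$ by Proposition~\ref{prop:stabV}(\ref{item:parabolic}) (this is just $\fu_+\cdot\V_i\subset\V_{i-1}$ of Proposition~\ref{prop:algHiggs}). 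For the $\b$-component I would use that on $\cal R(\b)$ the endomorphism $\b_\xi(\eta)$ has full rank $r$, so one may choose the frame $p$ so that $p\circ\b_\xi(\eta)\circ p^{-1}$ equals the model element $y=y_{\a_1}+\cdots+y_{\a_r}$; this frame indeed belongs to $P_{K\cap Q}$ because $V_\xi$ is built from $\b_\xi(\eta)$ exactly as $\V$ is built from $y$. Since $\scal{\a_i,h}=2$ for each $i$, we have $y\in\fl_-\subset\fq$, so $y$ preserves $\V$ (equivalently $y\cdot\V_i\subset\V_{i+1}$). As preservation of $V_\xi$ by $\b_\xi(\eta)$ is an intrinsic statement, it now holds in every frame, and since $L_\xi$ is one–dimensional it holds for all $\eta\in L_\xi$. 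Thus both $\b_\xi$ and $\g_\xi$ carry $V_\xi$ into itself, giving $\t(\cV\otimes L)\subset\cV$ over $\cal R(\b)$.

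The remaining point is to extend the inclusion across $\S(\b)=\PTX\setminus\cal R(\b)$, and this is where one must be careful, since the identification with the algebraic model degenerates there and the fiberwise argument is no longer available. The clean way around this is sheaf-theoretic: composing $\t|_L\colon\cV\otimes L\to\cE$ with the projection $\cE\to\cE/\cV$ yields a morphism $\psi\colon\cV\otimes L\to\cE/\cV$ that vanishes on $\cal R(\b)$, so $\im\psi$ is a coherent subsheaf of $\cE/\cV$ supported on the proper analytic subset $\S(\b)$, i.e. a torsion subsheaf. But $\cV$ was defined as a saturation, so $\cE/\cV$ is torsion-free; hence $\im\psi=0$, which is precisely $\t(\cV\otimes L)\subset\cV$ on all of $\PTX$. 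I expect the genuine content to lie not in the fiberwise algebra—entirely supplied by Propositions~\ref{prop:algHiggs} and~\ref{prop:stabV}—but in the clean translation of the parabolic stability of $\V$ into the frame-dependent statements about $\b_\xi$ and $\g_\xi$ via the reduction $P_{K\cap Q}$, together with the passage from $\cal R(\b)$ to $\PTX$; the latter is exactly the step for which $\cV$ was constructed as a saturated sheaf.
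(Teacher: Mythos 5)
Your proof is correct. The first half --- the fiberwise verification on $\cal R(\b)$, using a frame $p\in P_{K\cap Q}$ to conjugate $\b_\xi(\eta)$ to the model element $y$ and $\g_\xi(\eta)$ into $\fu_+$, and then invoking Proposition~\ref{prop:algHiggs} (equivalently $y\in\fl_-\subset\fq$, $\fu_+\subset\fq$ and Proposition~\ref{prop:stabV}) --- is exactly the paper's argument. Where you genuinely diverge is in the extension across $\S(\b)$. The paper proceeds in two steps: it first extends the reduction $P_{K\cap Q}$ from $\cal R(\b)$ to the big open set $\cal U$ (this is the content of the unnumbered lemma preceding the proposition), obtains $\t$-invariance on $\cal U$ by continuity, and then uses that $\cV$, being saturated, is a \emph{normal} sheaf, so that the restriction map $\cV(\PTX)\pfd\cV(\cal U)$ is an isomorphism because $\codim\,\PTX\backslash\cal U\geq 2$. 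Your torsion argument short-circuits both steps: the morphism $\psi:\cV\otimes L\pfd\cE/\cV$ vanishes on $\cal R(\b)$, so $\im\psi$ is a coherent subsheaf of $\cE/\cV$ supported on the proper analytic set $\S(\b)$, hence torsion, hence zero since $\cE/\cV$ is torsion-free. This buys two things: it requires no codimension-two control (it works even though $\S(\b)$ may well have codimension $1$), and it makes no use of the extension of the reduction to $\cal U$ --- although that lemma is not thereby made redundant in the paper, since the identification $\cV_k=\cal O(P_{K\cap Q}(\V_k))$ on $\cal U$ is what drives the slope computation in the proof of Theorem~\ref{theo:MW}. One small precision you should add: $\cE/\cV$ is torsion-free not because $\cV$ is ``a saturation'' inside $\cE$, but componentwise --- each $\cV_k$ is by definition saturated in $\cE_k$, so $\cE_k/\cV_k$ is torsion-free, while $\cE_i/\cV_i=\cE_i$ is locally free for $i>r$, and $\cE/\cV=\oplus_i\,\cE_i/\cV_i$ is then a direct sum of torsion-free sheaves.
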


\begin{proof}
  By Proposition~\ref{prop:algHiggs}, we know that $y$ and $\fu_+$ stabilize
$\V$. Therefore, on $\cal R(\b)$, the two components $\b$ and $\g$ of the Higgs field
stabilize the subsheaf $\cV$ since it is the
sheaf of sections of the subbundle $V=P_{K\cap Q}(\V)$
of $ E=P_{K\cap Q}(\E)$. By continuity, this still holds on $\cal U$ since on this big open set
$\cV$ is also the sheaf of section of $V=P_{K\cap Q}(\V)$.
Now,  $\cV$ is a saturated, hence normal, subsheaf of
$\cal O( E)$ by definition. Hence the restriction map $\cV(\PTX)\pfd \cV(\cal U)$ is an isomorphism
since $\codim\,\PTX\backslash\cal U\geq 2$.  Therefore $\cV$ is indeed a leafwise Higgs subsheaf of
$( E,\t)$ on $\PTX$.
\end{proof}

\medskip

Instead of working with $\b$ on the  Higgs bundle $(E,\t)$, we can consider ${}^t\g$ on the
dual Higgs bundle $(E^\vee,{}^t\t)$ and exactly the same reasoning  yields a leafwise Higgs subsheaf
$\cV'$ of $(E^\vee,{}^t\t)$, see Remark~\ref{rem:dual}.

\subsection{Proof of the Milnor-Wood inequality}\label{subsec:proofMW}\hfill\medskip

Together with the computation of the slopes of the $H$-submodules $\V_k$ of
$\E$, the construction of the leafwise Higgs subsheaves $\cV$ of $(E,\t)$ and $\cV'$ of $(E^\vee,{}^t\t)$
gives the Milnor Wood inequality:

\begin{theo}\label{theo:MW} We have the inequalities
  $\deg_\T(E_0)+\frac{\rk\b}{2}\, \deg_\T(L) \leq\mu_\T(\cV)\leq 0$ and
  $\deg_\T(E_0^\vee)+\frac{\rk\g}{2}\, \deg_\T(L) \leq \mu_\T(\cV')\leq 0$, therefore
  $|\deg_\T(E_0)|\leq \frac p2\,\deg_\T(L^\vee)$.
\end{theo}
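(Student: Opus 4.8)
The plan is to extract the inequalities from the leafwise Higgs subsheaf $\cV=\oplus_{k=0}^r\cV_k$ of $(E,\t)$ built in Proposition~\ref{prop:VHiggs}, where $r=\rk\b$. The upper bound $\mu_\T(\cV)\leq 0$ is immediate: $\cV$ is a leafwise Higgs subsheaf, so the semistability clause of Proposition~\ref{prop:polystab} gives $\deg_\T\cV\leq 0$, and dividing by the positive rank $\rk\cV=\sum_k\rk V_k$ yields $\mu_\T(\cV)\leq 0$. The substance of the proof is to evaluate $\mu_\T(\cV)$ and identify it with $\deg_\T(E_0)+\tfrac r2\deg_\T(L)$; together with the upper bound this is exactly the first displayed inequality.

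To compute $\mu_\T(\cV)$, I would transfer the representation-theoretic slope identity of Proposition~\ref{prop:calcul-pente} into foliated degrees. On the big open set where $\cV$ provides the reduction $P_{K\cap Q}$ of the structure group, each $V_k$ is the associated bundle $P_{K\cap Q}(\V_k)$, so $\deg_\T V_k=\deg_\T(P_{K\cap Q}(\det\V_k))$ and hence $\mu_\T(V_k)$ is the image of $\mu(\V_k)\in X(H)\otimes_\Z\Q$ under the linear map carrying a character of $H$ to the foliated degree of its associated line bundle. Proposition~\ref{prop:calcul-pente} then gives $\mu_\T(V_k)=\mu_\T(V_0)+k\,\mu_\T(\fl_-)$, with $\mu_\T(\fl_-)$ the foliated slope of $P_{K\cap Q}(\fl_-)$, and $V_0=E_0$ because $\V_0=\E_0$ is the one-dimensional highest weight line. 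Writing $\mu_\T(\cV)=\big(\sum_k\rk V_k\,\mu_\T(V_k)\big)\big/\sum_k\rk V_k$ and using the symmetry $\rk V_k=\rk V_{r-k}$ of Lemma~\ref{lemm:dim-Vi} to deduce $\sum_k k\,\rk V_k=\tfrac r2\sum_k\rk V_k$, I obtain
$$\mu_\T(\cV)=\deg_\T(E_0)+\tfrac r2\,\mu_\T(\fl_-).$$

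The key remaining step, which I expect to be the main obstacle, is the identification $\mu_\T(\fl_-)=\deg_\T(L)$, and here I would use the top iterate of the Higgs field. By Proposition~\ref{prop:rang-y}, $\varphi(y)^r$ sends the line $\E_0=\E_\varpi$ isomorphically onto $\E_{\varpi-\alpha_1-\cdots-\alpha_r}\subset\V_r$, and $\dim\V_r=\dim\V_0=1$ by Lemma~\ref{lemm:dim-Vi}; thus $V_r$ is a line bundle and, over the regular locus $\cal R(\b)$ where $(\b_\xi)^r\neq 0$ by definition, $\b^r$ induces a nonzero—hence isomorphic—bundle map $V_0\otimes L^r\to V_r$. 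This forces $\deg_\T(V_r)=\deg_\T(E_0)+r\deg_\T(L)$, and comparing with the case $k=r$ of the identity above, namely $\mu_\T(V_r)=\deg_\T(E_0)+r\,\mu_\T(\fl_-)$, gives $\mu_\T(\fl_-)=\deg_\T(L)$. Hence $\mu_\T(\cV)=\deg_\T(E_0)+\tfrac r2\deg_\T(L)\leq 0$, which is the first inequality with $r=\rk\b$.

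Finally, I would run the identical argument for the dual leafwise Higgs subsheaf $\cV'$ of $(E^\vee,{}^t\t)$ associated with ${}^t\g$ (Remark~\ref{rem:dual}), obtaining $\deg_\T(E_0^\vee)+\tfrac{\rk\g}2\deg_\T(L)\leq 0$, the second inequality. Since $\deg_\T(E_0^\vee)=-\deg_\T(E_0)$ and $\deg_\T(L)=-\deg_\T(L^\vee)$ with $\deg_\T(L^\vee)>0$ by (\ref{equa:deg-L}), these read $\deg_\T(E_0)\leq\tfrac{\rk\b}2\deg_\T(L^\vee)$ and $-\deg_\T(E_0)\leq\tfrac{\rk\g}2\deg_\T(L^\vee)$. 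As $\rk\b\leq p$, $\rk\g\leq p$ and $\deg_\T(L^\vee)>0$, both right-hand sides are bounded by $\tfrac p2\deg_\T(L^\vee)$, and combining the two gives $|\deg_\T(E_0)|\leq\tfrac p2\deg_\T(L^\vee)$, as claimed.
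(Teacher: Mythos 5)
Your argument follows the paper's proof almost step for step: the slope identity $\mu_\T(\cV_k)=\deg_\T(E_0)+k\,\mu_\T(\fl_-)$ obtained by pushing Proposition~\ref{prop:calcul-pente} through the reduction $P_{K\cap Q}$ (legitimate because the unipotent radical of $K\cap Q$ acts trivially on $\V$, Proposition~\ref{prop:stabV}), the averaging over $k$ via $\dim\V_k=\dim\V_{r-k}$ (Lemma~\ref{lemm:dim-Vi}), the comparison coming from the map $\b^r:\cV_0\otimes L^r\to\cV_r$, the bound $\mu_\T(\cV)\leq 0$ from Propositions~\ref{prop:VHiggs} and~\ref{prop:polystab}, the dual argument for $\g$, and the final combination using $\rk\b\leq p$, $\rk\g\leq p$ and $\deg_\T(L^\vee)>0$. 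All of this is exactly the paper's route.

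There is, however, one step that is wrong as stated: the claim that, because $\b^r$ is fibrewise nonzero on $\cal R(\b)$ and hence an isomorphism of line bundles there, this \emph{forces} $\deg_\T(V_r)=\deg_\T(E_0)+r\,\deg_\T(L)$, i.e. the equality $\mu_\T(\fl_-)=\deg_\T(L)$. The isomorphism only holds on $\cal R(\b)$, and its complement $\S(\b)$ is an analytic subset of codimension possibly equal to~$1$; two line bundles isomorphic off a divisor need not have the same (foliated) degree. Concretely, $\b^r$ is a global nonzero section of the rank-one sheaf $\Hom(\cV_0\otimes L^r,\cV_r)$ vanishing along $\S(\b)$, so what one gets is $\deg_\T(\cV_r)-\deg_\T(E_0)-r\,\deg_\T(L)=\deg_\T$ of an effective divisor, which is $\geq 0$ but can be strictly positive. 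The correct (and all that is needed) conclusion from the non-vanishing of $\b^r$ is the inequality $\mu_\T(\cV_r)\geq\mu_\T(\cV_0)+r\,\deg_\T(L)$, hence $\mu_\T(\fl_-)\geq\deg_\T(L)$; since the coefficient $\frac r2$ of $\mu_\T(\fl_-)$ in your expression for $\mu_\T(\cV)$ is nonnegative, this weaker statement still yields $\mu_\T(\cV)\geq\deg_\T(E_0)+\frac r2\,\deg_\T(L)$, and the rest of your argument, including the dual case, goes through unchanged. With this one correction your proof coincides with the paper's.
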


\begin{proof}
We begin with the inequality $\deg_\T(E_0)+\frac{\rk\b}{2}\,\deg_\T(L)\leq\mu_\T(\cV)$. Let $n_k:=\dim \V_k$.
First, recall that Proposition~\ref{prop:stabV}~(\ref{item:reductive-Q}) states that the unipotent
radical of $Q$ acts trivially on $\bV$.
Therefore so does the unipotent radical of $K \cap Q$. Thus, in fact, $\bV$ is a $(K \cap Q)/R_u(K
\cap Q)$-module, and
$(K \cap Q)/R_u(K \cap Q) \simeq H$ is reductive.
Thus we may apply Proposition~\ref{prop:calcul-pente} and deduce that the $K\cap Q$-representations
$(\det\, \V_{k+1})^{n_1n_k}$ and
$(\det\, \V_k)^{n_1n_{k+1}}\otimes(\det\,\V_1)^{n_kn_{k+1}}\otimes (\V_0^\star)^{n_1n_kn_{k+1}}$ are
isomorphic. On the big open set $\cal U\subset\PTX$, we have $\cal V_k=\cal O(V_k)$ where $V_k=P_{K\cap
  Q}(\V_k)$. Therefore on $\cal U$, and hence on $\PTX$, the line bundles $(\det\,
\cal V_{k+1})^{n_1n_k}$ and $(\det\, \cal V_k)^{n_1n_{k+1}}\otimes(\det\,\cal
V_1)^{n_kn_{k+1}}\otimes (\cal V_0^\star)^{n_1n_kn_{k+1}}$ are
isomorphic. This implies that $\mu_\T(\cal V_{k+1})=\mu_\T(\cal V_k)+\mu_\T(\cV_0^\vee\otimes\cal V_1)$,
i.e. that $\mu_\T(\cal V_k)=\deg_\T(\cal V_0)+k\,\mu_\T(\cV_0^\vee\otimes\cal V_1)$.

Let $r$ be short for $\rk\b$. Since $\b^r:\cal V_0\otimes L^r\pfd\cal V_r$ is not zero, we also have $\mu_\T(\cal V_r)\geq \mu_\T(\cal
V_0)+r\,\mu_\T(L)$ so that $\mu_\T(\cV_0^\vee\otimes\cal V_1)\geq \deg_\T (L)$.

Finally, remembering that $n_k=n_{r-k}$ by Lemma~\ref{lemm:dim-Vi} and that $\cV_0=E_0$, we get
$$
\begin{array}{rcl}
  2\,\deg_\T(\cal V) & = & \ds\sum_{k=0}^r \deg_\T(\cal V_k)+\sum_{k=0}^r \deg_\T(\cal V_{r-k}) \\
& = & \ds\sum_{k=0}^r \left({ n_k\,\mu_\T(\cal V_k)+n_{r-k}\, \mu_\T(\cal V_{r-k})}\right) \\
& \geq &  \ds\sum_{k=0}^r n_k \left(\deg_\T(\cal V_0)+k\,\deg_\T(L)+ \deg_\T(\cal V_0)+(r-k)\,\deg_\T(L)\right) \\
& \geq & (\dim\V)\,(2\,\deg_\T(E_0)+r\,\deg_\T(L))
\end{array}
$$

The inequality $\mu_\T(\cV)\leq 0$ follows from Propositions~\ref{prop:VHiggs} and~\ref{prop:polystab}.

Finally the inequalities involving $E_0^\vee$, $\g$ and $\cV'$ are proved exactly in the same way. The
conclusion follows since $\rk\b\leq p$ and $\rk\g\leq p$.
\end{proof}

\medskip

In case of equality in Theorem~\ref{theo:MW}, we have:

\begin{prop}\label{prop:equal}
  Assume that $\deg_\T(E_0)+ \frac{\rk\b}{2}\, \deg_\T(L) =0$. Then
\begin{enumerate}
\item on the regular locus $\cal R(\b)=\PTX\backslash\S(\b)$ of $\b$, the orthogonal complement
  $V^\perp=\oplus V_i^\perp$ of the subbundle $V=\oplus V_i$ of $E$ w.r.t. the harmonic metric is
  stable under the Higgs field $\t:E\otimes L\pfd E$;
\item the regular locus $\cal R(\ovb)\subset X$ of $\ovb$ is (open and) dense in $X$.
\end{enumerate}
Similarly, if $\deg_\T(E_0^\vee)+\frac{\rk\g}{2}\, \deg_\T(L)=0$, then the orthogonal complement
  of $V' \subset E^\vee$ is stable by ${}^t\t:E^\vee\otimes L\pfd E^\vee$ on the regular locus ${\cal
    R}(\g)\subset\PTX$ of $\g$ and ${\cal R}(\ovg)\subset X$ is (open and) dense in $X$.
\end{prop}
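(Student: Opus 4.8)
The plan is to read off both assertions from the foliated polystability of $(E,\t)$, once the equality hypothesis has been converted into a foliated-degree-$0$ statement for the Higgs subsheaf $\cV$.

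\medskip

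For the assertion about $V^\perp$, recall that Theorem~\ref{theo:MW} always gives $\deg_\T(E_0)+\frac{\rk\b}{2}\deg_\T(L)\leq\mu_\T(\cV)\leq 0$; thus the hypothesis $\deg_\T(E_0)+\frac{\rk\b}{2}\deg_\T(L)=0$ forces $\mu_\T(\cV)=0$, i.e. $\deg_\T(\cV)=0$. Since $\cV$ is a saturated leafwise Higgs subsheaf of $(E,\t)$ (Proposition~\ref{prop:VHiggs}), I would apply Proposition~\ref{prop:polystab}~(2) to $\cV$: it yields that $\S(\cV)$ is saturated under $\T$ and that, on $\PTX\setminus\S(\cV)$, the $C^\infty$ orthogonal complement of the subbundle defined by $\cV$ satisfies $\t(V^\perp\otimes L)\subset V^\perp$. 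It then suffices to note $\S(\cV)\subset\S(\b)$: on $\cal R(\b)$ the sheaf $\cV$ is locally free, being the sheaf of sections of $V=\bigoplus_k(E_k\cap F_{r-2k})$, so $\cal R(\b)\subset\PTX\setminus\S(\cV)$. Hence the stability of $V^\perp$ under $\t$ holds on all of $\cal R(\b)$, which is the first point.

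\medskip

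For the density of $\cal R(\ovb)$, I would study the top power $\b^{r}\colon\cV_0\otimes L^{r}\to\cV_r$, with $r=\rk\b$. Here $\cV_0=E_0$ is a line bundle and, by Lemma~\ref{lemm:dim-Vi}, $\dim\V_r=\dim\V_0=1$, so $\cV_r$ is a line bundle as well; the map $\b^r$ is nonzero by the very definition of $r$. In the equality case, the computation in the proof of Theorem~\ref{theo:MW} (using Proposition~\ref{prop:calcul-pente}) shows $\mu_\T(\cV_r)=\deg_\T(E_0)+r\deg_\T(L)$, so that $\b^r$ is a nonzero section of the line bundle $\mathcal H:=\cV_r\otimes E_0^\vee\otimes(L^\vee)^{\otimes r}$ with $\deg_\T(\mathcal H)=0$, whose zero divisor is the divisorial part of $\S(\b)$. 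The heart of the matter is to show that this divisor is empty, equivalently that $\S(\b)$ has codimension $\geq 2$ in $\PTX$. Being effective of foliated degree $0$, the zero divisor of $\b^r$ must be invariant under $\T$, and the main obstacle is precisely to exclude a nonempty $\T$-invariant hypersurface. I would do this through the positivity of the foliated degree: the transverse measure defined by $\o_{\cal G}$ has full support (a generic complex geodesic is dense in $X$ by ergodicity), so that a nonzero effective divisor has strictly positive foliated degree, which forces the divisor to vanish.

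\medskip

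Granting this, $\S(\b)$ has codimension $\geq 2$, hence a generic line $\ell$ contained in a fibre of $\pi\colon\PTX\to X$ avoids $\S(\b)$; therefore $\deg(\mathcal H|_\ell)=0$ for every line $\ell$ in every fibre, and since ${\rm Pic}(\P^{n-1})=\Z$ this gives $\mathcal H|_{\pi^{-1}(x)}\cong\cO$ for all $x$. Consequently $\b^r$ restricts to a constant on each fibre; these constants assemble into a section of the line bundle $\pi_\star\mathcal H$ on $X$, and this section is nonzero because $\b^r\not\equiv 0$. Its zero locus is exactly $\S(\ovb)$, which is therefore a proper analytic subset, so $\cal R(\ovb)$ is open and dense. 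Finally, the statements for $\g$, $\cV'$, $\cal R(\g)$ and $\cal R(\ovg)$ follow verbatim by running the same argument on the dual Higgs bundle $(E^\vee,{}^t\t)$ and its subsheaf $\cV'$, as in Remark~\ref{rem:dual}.
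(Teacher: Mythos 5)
Your proof of part (1) is the paper's proof: the hypothesis together with Theorem~\ref{theo:MW} forces $\mu_\T(\cV)=0$, and Proposition~\ref{prop:polystab}~(2), applied to the saturated leafwise Higgs subsheaf $\cV$ of Proposition~\ref{prop:VHiggs}, gives the $\t$-invariance of $V^\perp$ on $\PTX\setminus\S(\cV)\supset{\cal R}(\b)$. That part is correct and complete.

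Part (2) is where there is a genuine gap. The paper's proof is dynamical: Proposition~\ref{prop:polystab}~(2) implies that $\S(\b)$ is saturated under the tautological foliation $\T$ (as in the proof of \cite{KM}*{Lemma~4.5}), and then Ratner's theorem on unipotent flows, through \cite{KM}*{Proposition~3.6}, shows that a proper closed $\T$-saturated analytic subset of $\PTX$ projects to a proper analytic subset of $X$, whence the density of ${\cal R}(\ovb)$. You try to bypass this dynamical input with a foliated-degree positivity argument, and that is exactly the step that fails. The claim ``a nonzero effective divisor has strictly positive foliated degree because the transverse measure has full support'' is valid only for divisors with no $\T$-invariant component: in that case $\deg_\T$ of the divisor really is a count of transverse intersection points of the divisor with the leaves, weighted by a measure of full support. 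That is precisely how you (correctly) deduce that the zero divisor of $\b^r$ must be $\T$-invariant. But once the divisor $D$ is invariant, the pairing of $c_1(\cO(D))$ with the foliated cycle is no longer an intersection count --- it is governed by the normal bundle of $D$ along the leaves it contains --- and it can perfectly well vanish for $D\neq 0$; full support of the transverse measure gives no positivity whatsoever in this case. So your argument is circular: it excludes nothing in the only case that remains, which is exactly the case the paper handles with Ratner's theorem. There is also a second, smaller, gap downstream: even granting $\codim\,\S(\b)\geq 2$, your fiberwise argument shows only that the vanishing locus of $\b^r$ \emph{as a section of the line bundle} $\cH=\Hom(E_0\otimes L^r,\cV_r)$ is a union of fibers $\pi^{-1}(B)$ with $B\subsetneq X$. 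But $\S(\b)$ is the vanishing locus of $\b^r$ as a map into $E_r$, so it also contains the locus where the sheaf inclusion $\cV_r\hookrightarrow\cE_r$ degenerates as a bundle map; that locus has codimension $\geq 2$ in $\PTX$, yet for $n\geq 3$ a codimension-$2$ analytic subset of $\PTX$ can still project onto $X$, so identifying $\S(\ovb)$ with the zero locus of a section of $\pi_\star\cH$ is not justified. The paper's route through saturation plus Ratner avoids both problems at once, since it never needs to control the codimension of $\S(\b)$.
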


\begin{proof}
  The first point follows from the discussion after the definition of the subsheaves $\cV_k$ and the
  polystability property (2) in Proposition~\ref{prop:polystab}, since our hypothesis implies that
  $\deg_\T \cV=0$ by Proposition~\ref{theo:MW}.
Proposition~\ref{prop:polystab}~(2) implies that the singular locus $\S(\b)\subset\PTX$ is saturated
under the tautological foliation $\T$, see the proof of
\cite{KM}*{Lemma~4.5}. This, together with M. Ratner's results on unipotent flows, in turn implies
the second point of the proposition, see
\cite{KM}*{Proposition~3.6}.
\end{proof}

\section{Maximal representations}\label{sec:max}

Maximal representations $\rho:\G\pfd G_\R$, where $\G$ is a uniform lattice in $\SU(1,n)$ with
$n\geq 2$ and $G_\R$ is a {\em classical} Lie group of Hermitian type,
were classified in \cite{KM}. Therefore we focus here on {\em exceptional} targets, namely $G_\R$ is either
${\rm E}_{6(-14)}$, which is not of tube type, or ${\rm E}_{7(-25)}$, which is.

In Section~\ref{subsec:tube} we exclude the possibility of maximal representations in ${\rm E}_{7(-25)}$.
In fact, our uniform approach allows to easily prove that maximal
representations in tube type target groups $G_\R$ do not exist.  The case of ${\rm E}_{6(-14)}$ is
treated in Section~\ref{subsec:E6}.

\subsection{Tube type targets}\label{subsec:tube}\hfill

\medskip

We prove that whenever $G_\R$ has tube type and $n\geq 2$, representations from $\G$ to
$G_\R$ satisfy an inequality  stronger than the Milnor-Wood inequality, preventing any
representation in such a group to be maximal:

\begin{prop}
 \label{prop:tube}
 Let $\G$ be a uniform lattice in $\SU(1,n)$, with $n\geq 2$, and let $X=\G\backslash\H$.
 Assume that $G_\R$ has tube type and let $p$ be the real rank of $G_\R$. Let $\rho$ be a
 representation $\G\pfd G_\R$. Then
 $$
 |\tau(\rho)|\,\leq\, \max\left({p-1,\frac p2\cdot \frac{n+1}{n}}\right)\, \vol(X)\,<\,p\, \vol(X).
 $$
\end{prop}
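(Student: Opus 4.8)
The plan is to treat the two halves $\tau(\rho)\le(\ldots)\vol(X)$ and $-\tau(\rho)\le(\ldots)\vol(X)$ separately; by Remark~\ref{rem:dual} (working with ${}^t\g$ on $\E^\vee$, which in tube type is isomorphic to $\E$ by Remark~\ref{rema:tube-type}), the bound on $-\tau(\rho)$ follows from the bound on $\tau(\rho)$ by the identical argument, so I only discuss $\tau(\rho)$. Recall from \eqref{equa:toledo} that $\tau(\rho)=4\pi\deg_\T(E_0)$ and from \eqref{equa:deg-L} that $\deg_\T(L^\vee)=\tfrac1{2\pi}\vol(X)$. The first inequality of Theorem~\ref{theo:MW} reads $\deg_\T(E_0)+\tfrac{\rk\b}{2}\deg_\T(L)\le 0$, i.e. $\deg_\T(E_0)\le\tfrac{\rk\b}{2}\deg_\T(L^\vee)$, which translates into $\tau(\rho)\le\rk\b\,\vol(X)$. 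Since $\b$ raises the grading by one we have $\rk\b\le p$, and when $\rk\b\le p-1$ this already gives $\tau(\rho)\le(p-1)\vol(X)$, the first term in the maximum. Thus the real content is the maximal-rank case $\rk\b=p$, and the tube-type hypothesis must be exactly what prevents this from being compatible with the naive bound $p\,\vol(X)$.

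So assume $\rk\b=p$ and work downstairs on $X$. The component $\ovb$ is a holomorphic section of $\bigoplus_i\Hom(\ovE_i,\ovE_{i+1})\otimes\Omega^1_X$, and the endomorphisms $\ovb_x(\eta)$ for varying $\eta\in T_xX$ pairwise commute (since $\ug_-$ is abelian and $[\ovt,\ovt]=0$). Hence the $p$-fold composite $\ovb^{\,p}$, whose value at $(x,\eta)$ is $\ovb_x(\eta)^p$, is a holomorphic section of $\Hom(\ovE_0,\ovE_p)\otimes{\rm Sym}^p\Omega^1_X$, and by the very definition of $\rk\b$ it is nonzero precisely because $\rk\b=p$. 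Now tube type enters: by Remark~\ref{rema:tube-type} the lowest weight is $\mu_0=-\varpi$, so $\ovE_p\cong\ovE_0^\vee$ and $\Hom(\ovE_0,\ovE_p)\cong\ovE_0^{-2}$. Therefore $\ovb^{\,p}$ is a nonzero holomorphic section of ${\rm Sym}^p\Omega^1_X\otimes\ovE_0^{-2}$, equivalently a nonzero — hence injective — morphism of sheaves $\ovE_0^{2}\hookrightarrow{\rm Sym}^p\Omega^1_X$.

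To finish I would invoke stability. Since $X$ is a compact ball quotient, its Kähler–Einstein metric is Hermitian–Einstein on $T_X$, so $\Omega^1_X$ is semistable (indeed stable) with respect to $\omega$, of slope $\mu(\Omega^1_X)=\tfrac1n\deg(K_X)$; the induced metric on ${\rm Sym}^p\Omega^1_X$ is again Hermitian–Einstein, so this bundle is polystable of slope $\tfrac pn\deg(K_X)$. The injection above presents the rank-one subsheaf $\ovE_0^{2}$, of slope $2\deg(\ovE_0)$, inside it, so semistability forces $2\deg(\ovE_0)\le\tfrac pn\deg(K_X)$. Substituting $\deg(\ovE_0)=\tfrac{\tau(\rho)}{4\pi}$ and, from \eqref{equa:deg-canonique}, $\deg(K_X)=\tfrac{n+1}{4\pi}\vol(X)$ gives $\tau(\rho)\le\tfrac p2\cdot\tfrac{n+1}{n}\vol(X)$, the second term of the maximum. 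Combining the two regimes and repeating everything for $\g$ yields the stated inequality; the strict bound $<p\,\vol(X)$ then follows because $n\ge2$ gives both $p-1<p$ and $\tfrac{n+1}{2n}<1$.

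The step I expect to be the crux is the middle one: verifying that $\ovb^{\,p}$ genuinely lands in the \emph{symmetric} power (so that it is an honest twisted symmetric differential on $X$) and that its nonvanishing is equivalent to $\rk\b=p$, together with recognizing that tube type is used only through the self-duality $\ovE_p\cong\ovE_0^\vee$, which is what converts the top power of the Higgs field into a section of ${\rm Sym}^p\Omega^1_X\otimes\ovE_0^{-2}$ rather than an unusable $\Hom(\ovE_0,\ovE_p)$-valued object. The remaining ingredients — semistability of $\Omega^1_X$ for ball quotients and its stability under ${\rm Sym}^p$ — are standard, and $n\ge2$ is needed only to make the final inequality strict.
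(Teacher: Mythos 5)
Your proof is correct and is essentially the paper's own argument: the same case split on the generic rank of $\b$ (Theorem~\ref{theo:MW} disposes of $\rk\b\leq p-1$), and in the maximal-rank case the same use of tube type --- $\ovE_p\cong\ovE_0^\vee$ is a line bundle --- to view $\ovb^{\,p}$ as a nonzero morphism $\ovE_0\otimes\ovE_p^\vee\to S^p\Omega^1_X$ and conclude by semistability of $S^p\Omega^1_X$ on the K\"ahler--Einstein manifold $X$, together with $\deg\ovE_p=-\deg\ovE_0$. The only cosmetic differences are that the paper reduces to $\tau(\rho)>0$ at the outset instead of dualizing via Remark~\ref{rem:dual}, and that your verification that $\ovb^{\,p}$ genuinely lands in the symmetric power (commutativity from $[\ovt,\ovt]=0$) and is nonzero iff $\rk\b=p$ spells out a point the paper leaves implicit.
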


\begin{proof}
We may assume that $\tau(\rho)>0$. Recall that we assumed without loss of generality that $\rho$
is reductive (Assumption \ref{assu:reductive}). Then, the constructions of
\textsection \ref{sec:HiggsFoliated} and \textsection \ref{sec:MW}
are valid and the inequality of the Proposition is equivalent to the inequality
$$
\deg_\T(E_0) \,\leq\,
\max\left({\frac{p-1}{2},\frac {p}{2} \cdot \frac{n+1}{2n}}\right)\, \deg_\T(L^\vee)\,
\, < \, \frac p2 \, \deg_\T(L^\vee)\, .
$$
We use freely the notation of \textsection \ref{sec:MW}.  If the generic rank of $\b$ on the
projectivized tangent bundle $\PTX$
of $X$ is $\leq p-1$ then we are done by Theorem~\ref{theo:MW}. Therefore we may assume that the
generic rank of $\b$ on $\PTX$ is $p$.

We come back to the Higgs bundle $(\ovE,\ovt)$ on $X$ and we consider $\ovb:\ovE\otimes T_X\pfd
\ovE$. The fact that $\rk\b=p$ implies that $\ovb^p$, seen as a morphism from
$\ovE_0\otimes\ovE_p^\vee$ to the $p$-th symmetric power $S^p\Omega^1_X$ of $\Omega^1_X$, is not
zero. Since $\Omega^1_X$ is a semistable bundle
over $X$ ($X$ is Kähler-Einstein), so is $S^p\Omega^1_X$. On the other hand,
$\ovE_0\otimes\ovE_p^\vee$ is also semistable
because it is a line bundle by Remark \ref{rema:tube-type}.
Therefore $\mu(\ovE_0\otimes\ovE_p^\vee)\leq \mu(S^p\Omega^1_X)$,
so that $\deg \ovE_0-\deg \ovE_p\leq p\,\mu(\Omega^1_X)$. Now, as explained in
Remark \ref{rema:tube-type},
the $K$-modules
$\E_p$ and $\E_0^\vee$ are isomorphic, so that $\deg\ovE_p=-\deg\ovE_0$.
Hence the
result, since by equations (\ref{equa:deg-canonique}) and (\ref{equa:deg-L}),
$\deg(\Omega^1_X)=\frac{n+1}{2}\, \deg_\T(L^\vee)$.
\end{proof}

\subsection{Target group ${\rm E}_{6(-14)}$}\label{subsec:E6}\hfill

\medskip

\subsubsection{Algebraic preliminaries}
In the case $G_\R={\rm E}_{6(-14)}$, the minuscule
representation of $G={\rm E}_6$ is the standard
representation of ${\rm E_6}$ on the 27 dimensional complex exceptional Jordan algebra $\E={\mathfrak
  J}_\C$. The real rank of ${\rm E}_{6(-14)}$ is 2 and $\E=\E_0\oplus\E_1\oplus\E_2$ with $\E_0$, $\E_1$,
and $\E_2$ of dimension 1, 16 and 10 respectively.

We start with a description of the $\Spin_{10}$-representations $\bE_1$ and $\bE_2$ in terms of octonions.
More precisely, by Proposition \ref{prop:Ei}(e), there is a $\Spin_{10}$-equivariant isomorphism
$\bE_1 \simeq \bE_0 \otimes \fu_-$.
Choosing a non-zero vector in $\bE_0$, this yields an isomorphism $\alpha : \bE_1 \to \fu_-$.
We consider the quadratic map $\kappa : \bE_1 \to \bE_2$ defined by
$\kappa(x)=\alpha(x) \cdot x$. This is a $\Spin_{10}$-equivariant quadratic map $\bE_1 \to \bE_2$.
As the next proposition shows, there is, up to a scale, only one such map.
It is certainly well-known to specialists, however we could not find an adequate reference:

\begin{prop}  \label{prop:spin-10}
 There is an identification of $\E_1$ with $\bO \oplus \bO$ and $\E_2$ with $\bC \oplus \bO \oplus \bC$
 such that $\kappa(u,v)=(N(u),uv,N(v))$.
\end{prop}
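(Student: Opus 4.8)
The plan is to pin down $\kappa$ purely by representation theory, using the subgroup $\Spin_8\times\Spin_2\subset\Spin_{10}$ together with triality, rather than computing directly with the formula $\kappa(x)=\alpha(x)\cdot x$. First I would record, as established in the proof of Proposition~\ref{prop:Ei-irr}, that $\E_1$ is the $16$-dimensional half-spin representation and $\E_2$ the $10$-dimensional vector representation of $\Spin_{10}$. Fixing a splitting $\bC^{10}=\bC^8\oplus\bC^2$ determines a subgroup $\Spin_8\times\Spin_2\subset\Spin_{10}$, and under it the vector representation decomposes as $\E_2=V_8\oplus\bC_{+1}\oplus\bC_{-1}$ and the half-spin representation as $\E_1=S^+\oplus S^-$, where the subscripts record the $\Spin_2=\U(1)$-weights $\pm1$ and where $S^\pm$ have weights $\pm\tfrac12$. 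Triality for $\Spin_8$ identifies each of $V_8,S^+,S^-$ with the octonions $\bO$, carrying the invariant quadratic form to the norm $N$; this produces the desired identifications $\E_1\simeq\bO\oplus\bO$ and $\E_2\simeq\bC\oplus\bO\oplus\bC$.

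Next I would exploit the $\U(1)$-equivariance of $\kappa$ to split it by weight. Writing $x=(u,v)$ with $u\in S^+$ and $v\in S^-$, the three homogeneous parts of the quadratic map $\kappa$ carry $\U(1)$-weights $+1$, $0$ and $-1$, hence land in $\bC_{+1}$, $V_8$ and $\bC_{-1}$ respectively. Each part is moreover $\Spin_8$-equivariant: the weight $+1$ part is a $\Spin_8$-invariant quadratic form on $S^+\simeq\bO$, so it is a multiple $c_1N(u)$ of the norm; symmetrically the weight $-1$ part equals $c_3N(v)$; and the cross term is a $\Spin_8$-equivariant bilinear map $S^+\times S^-\to V_8$. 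By triality the space of such bilinear maps is one-dimensional, spanned by octonion multiplication, so the cross term equals $c_2\,uv$. Thus $\kappa(u,v)=(c_1N(u),\,c_2\,uv,\,c_3N(v))$ for some scalars $c_1,c_2,c_3$.

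It then remains to check that $c_1,c_2,c_3$ are nonzero, after which rescaling the three summands of $\E_2\simeq\bC\oplus\bO\oplus\bC$ by $c_1^{-1},c_2^{-1},c_3^{-1}$ gives exactly $\kappa(u,v)=(N(u),uv,N(v))$. First, $\kappa\neq0$: by Proposition~\ref{prop:Ei} the isomorphism $\E_1\simeq\E_0\otimes\fu_-$ sends $x$ to $\alpha(x)\cdot e_0$ for a fixed $0\neq e_0\in\E_0$, so $\kappa(x)=\alpha(x)\cdot(\alpha(x)\cdot e_0)$, and choosing $x$ with $\alpha(x)$ of rank $2$ this is nonzero by Proposition~\ref{prop:rang-y}. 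Since $\kappa$ is $\Spin_{10}$-equivariant, its image is a $\Spin_{10}$-stable cone whose linear span is a nonzero submodule of the irreducible module $\E_2$, hence all of $\E_2$. If any of $c_1,c_2,c_3$ vanished, the image would lie in one of the proper subspaces $V_8\oplus\bC_{-1}$, $\bC_{+1}\oplus\bC_{-1}$ or $\bC_{+1}\oplus V_8$, contradicting that its span is all of $\E_2$; hence all three scalars are nonzero.

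I expect the main obstacle to be the triality input: identifying $V_8,S^+,S^-$ with $\bO$ compatibly with the norms, and verifying that the unique (up to scale) $\Spin_8$-equivariant bilinear map $S^+\times S^-\to V_8$ is octonion multiplication. This is precisely the content of $\Spin_8$-triality and is where the octonionic model genuinely enters; everything else is weight bookkeeping together with Schur's lemma and the irreducibility of $\E_2$.
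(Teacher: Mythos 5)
Your proposal is correct and follows essentially the same route as the paper's proof: both restrict to $\Spin_8$ (you keep the extra $\Spin_2=\U(1)$ factor for weight bookkeeping), split $\E_1$ into the two half-spin representations $\S^\pm$ and $\E_2$ into $\bC\oplus\bV\oplus\bC$, invoke triality and Schur-type uniqueness to pin down the three components of $\kappa$ up to scalars, and then argue the scalars are nonzero. If anything, your treatment of the non-vanishing step is more complete than the paper's one-line remark that the image of $\kappa$ would otherwise be ``degenerate'': you first show $\kappa\neq 0$ by evaluating on a rank-$2$ element via Proposition~\ref{prop:rang-y}, and then use $\Spin_{10}$-equivariance together with the irreducibility of $\E_2$ to force all three constants to be nonzero.
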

\begin{proof}
We consider the $\Spin_{10}$ half-spin representation $\E_1^*$. According to \cite{chevalley}, when we
restrict to $\Spin_8$, this representation splits as $\S^+ \oplus \S^-$, where $\S^\pm$ denote the two
half-spin representations of $\Spin_8$. Similarly, the $\Spin_{10}$ vector representation $\E_2$ splits as
$\bC \oplus \bV \oplus \bC$, where $\bV$ denotes the $8$-dimensional vector representation.

Now, the quadratic map $\kappa$ is given by a $\Spin_{10}$-equivariant injection
$\E_2 \subset \E_1^* \otimes \E_1^*$. Since there are equivariant maps $\S^+ \oplus \S^- \to \bV$,
$\S^+ \otimes \S^+ \to \bC$ and $\S^- \otimes \S^- \to \bV$, and no equivariant maps from other
factors in this tensor product to $\E_2$, $\kappa$ is of the form
$\kappa(s_+,s_-) = ( \psi_+(s_+) , \varphi(s_+,s_-) , \psi_-(s_-) )$, for some equivariant maps
$\psi_+,\varphi$ and $\psi_-$. None of these maps can vanish, otherwise the image of $\kappa$
would be degenerate. Moreover, by triality, there are, up to scale, only one such map, which can be given,
once $\S^+,\S^-$ and $\bV$ are identified with the space of octonions $\bO$, by the formulas:
$\psi_+(s_+)=N(s_+),\varphi(s_+,s_-)=s_+s_-$ and $\psi_-(s_-)=N(s_-)$. The proposition follows.
\end{proof}

Given $x \in \fu_-$ resp. $y \in \fu_+$, $x$ resp. $y$ defines linear maps $\E_0 \to \E_1$ and $\E_1 \to \E_2$ resp.
$\E_1 \to \E_0$ and $\E_2 \to \E_1$. We denote these maps by $\lambda_1(x),\lambda_2(x)$
resp. $\mu_1(y),\mu_2(y)$. We thus have maps
$\lambda_1(x):\E_0 \to \E_1\ , \ \lambda_2(x):\E_1 \to \E_2$ and
$\mu_1(y):\E_1 \to \E_0 \ ,\ \mu_2(y):\E_2 \to \E_1$.

We can deduce from the explicit formula above some information about maps $\lambda_2(x)$:

\begin{prop}   \label{prop:fu-}
 Let $x,y \in \E_1 \simeq \fu_-$.
 Assume $x \neq 0$ and $y \neq 0$.
 \begin{enumerate}[(a)]
  \item $x$ has rank one \iff $\kappa(x) = 0$.
  \item $x$ has rank one \iff $\lambda_2(x)$ has rank $5$.
  \item $x$ has rank two \iff $\lambda_2(x)$ has rank $9$.
  \item Assume that any non trivial linear combination of $x$ and $y$ has rank $2$. Then
  $\dim ( \Ker \lambda_2(x) \cap \Ker \lambda_2(y) ) \leq 3$.
  \item Assume that $x$ and $y$ have rank $1$ and $\dim ( \im \lambda_2(x) \cap \im \lambda_2(y) ) \geq 4$.
  Then $x$ and $y$ are proportional.
 \end{enumerate}
\end{prop}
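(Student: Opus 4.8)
My plan is to carry out every computation in the octonionic model of Proposition~\ref{prop:spin-10}, exploiting two facts. First, the rank of $x\in\fu_-$ and the rank of the map $\lambda_2(x)$ are constant on $K$-orbits (because $\lambda_2$ is $K$-equivariant), and, as recalled before Proposition~\ref{prop:rang-y}, the $K$-orbits in $\fu_-$ are classified by the rank $r\in\{0,1,2\}$; hence it suffices to test each assertion on one representative of the rank-one and of the rank-two orbit. Second, I will use the arithmetic of the split complex octonions: for an isotropic element $u\in\bO$ (that is, $N(u)=0$ and $u\neq 0$) both multiplications $w\mapsto uw$ and $w\mapsto wu$ have rank exactly $4$, and the norm form satisfies $\langle wa,w'\rangle=\langle w,w'\bar a\rangle$ and $\langle aw,w'\rangle=\langle w,\bar aw'\rangle$.

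For (a), under the identifications $\E_1\simeq\fu_-$ and $\E_1\simeq\E_0\otimes\fu_-$ the fixed generator $e_0\in\E_0$ satisfies $\alpha(x)\cdot e_0=x$, so that $\kappa(x)=\alpha(x)\cdot x=\varphi(\alpha(x))^2(e_0)$. By Proposition~\ref{prop:rang-y} one has $\varphi(\alpha(x))^2=0$ when $\rk x\le 1$ and $\varphi(\alpha(x))^2(\E_0)\neq\{0\}$ when $\rk x=2$; since $\E_0$ is a line this gives $\kappa(x)=0$ exactly when $\rk x\le 1$, which is (a). For (b) and (c) I polarize $\kappa$ into the symmetric bilinear map $B$ with $\lambda_2(x)(x')=B(x,x')$, namely $B\big((u,v),(u',v')\big)=\big(\langle u,u'\rangle,\tfrac12(uv'+u'v),\langle v,v'\rangle\big)$. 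Evaluating on a rank-one representative $x=(u,0)$ with $N(u)=0$ gives $\lambda_2(x)(u',v')=(\langle u,u'\rangle,\tfrac12\,uv',0)$, whose image is $\bC\oplus u\bO\oplus 0$ of dimension $1+4=5$; on a rank-two representative $x=(1,0)$ it gives image $\bC\oplus\bO\oplus 0$ of dimension $9$. Together with the orbit-invariance of $\rk\lambda_2$ this yields both equivalences.

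For (d) (where $x$ and $y$ are understood to be linearly independent, as in the intended application), I may after acting by $K$ assume $x=(1,0)$ and write $y=(p,q)$, both of rank two. A short computation with $\kappa(sx+ty)$ shows that unless $q\neq 0$, either $y$ is proportional to $x$ or some nonzero combination $sx+ty$ has rank one, against the hypothesis; hence $q\neq 0$. If $N(q)\neq 0$ then right multiplication $R_q$ is invertible and the common kernel is already $\{0\}$. If $N(q)=0$, the common kernel equals $\{(m,0):\langle 1,m\rangle=0,\ \langle p,m\rangle=0,\ mq=0\}$, which is contained in $\Ker R_q\cap 1^\perp$. Now $\Ker R_q=\bO\bar q$ is $4$-dimensional, while $\langle 1,w\bar q\rangle=\langle q,w\rangle$ is not identically zero since $q\neq 0$; thus $1^\perp$ meets $\Ker R_q$ in a hyperplane, and the common kernel has dimension at most $3$.

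The delicate point is (e), and the crux is to recognise $\im\lambda_2(x)$ geometrically. For a rank-one $x=(u,0)$ the image $\bC\oplus u\bO\oplus 0$ is isotropic for the split form $Q(a,m,b)=2ab-N(m)$ (because $u\bO$ is isotropic in $\bO$) and has dimension $5$, so it is a \emph{maximal} isotropic subspace of $\E_2\simeq\bC^{10}$. Since the rank-one elements form a single connected $K$-orbit, all these subspaces lie in one of the two families of maximal isotropics, and $x\mapsto\im\lambda_2(x)$ is a nonconstant $\Spin_{10}$-equivariant map between two copies of $\Spin_{10}/P$, hence injective. I then invoke the classical parity property that two maximal isotropics of $\bC^{10}$ in the same family meet in odd dimension; consequently $\dim(\im\lambda_2(x)\cap\im\lambda_2(y))\ge 4$ forces this dimension to be $5$, so $\im\lambda_2(x)=\im\lambda_2(y)$ and therefore $[x]=[y]$, i.e. $x$ and $y$ are proportional. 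The main obstacle is thus assembling these spinor-geometric facts in (e)---maximality and isotropy of the image, its injective dependence on the pure spinor, and the parity of the intersection dimension---whereas (a)--(d) reduce to the two octonionic computations above.
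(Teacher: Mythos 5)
Your proposal is correct and follows essentially the same route as the paper's proof: the octonionic model of Proposition~\ref{prop:spin-10}, computations on the Igusa orbit representatives $(u,0)$ (with $N(u)=0$) and $(1,0)$, the polarization of $\kappa$ giving $\lambda_2$, the isotropic-subspace dimension count in a $7$-dimensional quadratic space for (d), and the parity of intersections of maximal isotropics of $\C^{10}$ for (e). The only deviations are local refinements, both sound: you deduce (a) from Proposition~\ref{prop:rang-y} rather than by separating the orbits via $\kappa$, and in (e) you make explicit two points the paper leaves implicit, namely that all images lie in a single family (connectedness of the rank-one orbit) and that equality of images forces proportionality (injectivity of a nonconstant $\Spin_{10}$-equivariant map between the spinor variety and a component of the orthogonal Grassmannian).
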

\begin{proof}
 We use the above isomorphism $\E_1 \simeq \bO \oplus \bO$. According to \cite{igusa}, there are exactly
 $3$ orbits in $\E_1$ under $\Spin_{10} \times \C^*$. Let $u \in \bO$ such that $N(u)=0$. We have
 $\kappa(u,0)=(0,0,0)$ and $\kappa(1,0)=(1,0,0)$. Thus, $(u,0)$ and $(1,0)$ cannot be in the same orbit.
 It follows that $(u,0)$ has rank $1$ and $(1,0)$ has rank $2$ and statement $(a)$ of the proposition is
 proved.

 Let $\tilde \kappa : \E_1 \times \E_1 \to \E_2$ be the polarization of $\kappa$, namely, the unique
 symmetric bilinear map such that $\tilde \kappa ( x , x ) = \kappa (x)$ for all $x$ in $\E_1$.
 We have $\lambda_2(x)=\tilde \kappa ( x , \cdot )$. Thus the image of $\lambda_2(u,0)$ is the set of triples
 $(t,z,t')$ with $t \in \bC$ arbitrary, $z$ a right multiple of $u$ in $\bO$, and $t'=0$: this space
 has dimension $5$. On the other hand, the image of $\lambda_2(1,0)$ is the set of triples
 $(t,z,t')$ with $t$ and $z$ arbitrary and $t'=0$. It has dimension $9$. Points $(b)$ and $(c)$ are
 proved.

 For point $(d)$, let us assume that any non trivial linear combination of $x$ and $y$ has rank $2$. Thanks to the
 result of Igusa, we may assume that $x=(1,0)$. Writing $y=(a,b)$, the assumption implies that $b \neq 0$
 (in fact, if $y=(a,0)$, then some linear combination of $x$ and $y$ will be of the form $(u,0)$ with
 $N(u)=0$). The kernel of $\lambda_2(x)$ is the space of elements of the form $(u,0)$ with $\scal{u,1}=0$.
 If such an element is in the kernel of $\lambda_2(x)$ then $bu=0$ and so $N(u)=0$. Thus, the intersection of
 the kernels of $\lambda_2(x)$ and $\lambda_2(y)$ is isomorphic to an isotropic subspace of the space of
 octonions $u$ with $\scal{u,1}=0$. Such an isotropic subspace can have at most dimension $3$.

 Finally, let us assume that $x$ and $y$ have rank $1$ and that $\dim ( \im \lambda_2(x) \cap \im \lambda_2(y) ) \geq 4$.
 Then we may assume that $x=(u,0)$ with $N(u)=0$ as
 above. The image of $\lambda_2(x)$ is then the set of triples $(t,z,0)$ with $t$ arbitrary and $z$ of the
 form $uw$ for some octonion $w$. Thus, this space is an isotropic subspace of $\E_2$ of maximal dimension $5$.
 Using the $\Spin_{10}$-action, it follows that for any $x \in \E_1$ of rank $1$, the image of $\lambda_2(x)$
 is an isotropic subspace of dimension $5$. Since two maximal isotropic subspaces
 in the same family can intersect only
 in odd dimension, it follows from the hypothesis on $x$ and $y$
 that the images of $\lambda_2(x)$ and $\lambda_2(y)$ are equal. One can check
 that this implies that $y$ is proportional to $(u,0)=x$.
\end{proof}

We constructed a quadratic $\Spin_{10}$-equivariant map $\kappa : \E_1 \to \E_2$ identifying $\E_1$ with $\fu_-$ and using
the linear map $\E_1 \to \E_2$ given by $x \in \fu_-$. Similarly,
let $\iota : \E^*_1 \to \E_2^*$ be the quadratic equivariant map obtained identifying $\E_1^*$ with $\fu_+$ and using
the linear map ${}^t\mu_2(w):E_1^* \to \E_2^*$ given by $w \in \fu_+$.
With the same proof, we get information about $\fu_+$ and the linear maps $\mu_2(w)$:

\begin{prop}   \label{prop:fu+}
 Let $w,z \in \E_1^* \simeq \fu_+$.
 Assume $w \neq 0$ and $z \neq 0$.
 \begin{enumerate}[(a)]
  \item $w$ has rank one \iff $\iota(w) = 0$.
  \item $w$ has rank one \iff $\mu_2(w)$ has rank $5$.
  \item $w$ has rank two \iff $\mu_2(w)$ has rank $9$.
  \item Assume that any non trivial linear combination of $w$ and $z$ has rank $2$. Then
  $\dim ( \im \mu_2(w) \cap \im \mu_2(z) ) \leq 3$.
  \item Assume that $w$ and $z$ have rank $1$ and $\dim ( \Ker \mu_2(w) \cap \Ker \mu_2(z) ) \geq 4$.
  Then $w$ and $z$ are proportional.
 \end{enumerate}
\end{prop}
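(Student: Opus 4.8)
The plan is to deduce Proposition~\ref{prop:fu+} from Proposition~\ref{prop:fu-} by running exactly the same argument on the \emph{dual} module $\E^\vee$. By Remark~\ref{rem:dual}, $\E^\vee$ inherits the grading $\E^\vee=\E_0^\vee\oplus\E_1^\vee\oplus\E_2^\vee$, in which $\E_1^\vee\simeq\fu_+$ as a $\Spin_{10}$-module and $\E_2^\vee$ is again the $10$-dimensional vector representation; the grade-raising action of $w\in\fu_+$ on $\E^\vee$ is precisely the map ${}^t\mu_2(w):\E_1^\vee\to\E_2^\vee$, and the associated quadratic equivariant map is $\iota$. The uniqueness argument of Proposition~\ref{prop:spin-10} (restriction to $\Spin_8$ and triality, together with \cite{chevalley}) applies verbatim to the second half-spin representation $\E_1^\vee$, so we may fix identifications $\E_1^\vee\simeq\bO\oplus\bO$ and $\E_2^\vee\simeq\bC\oplus\bO\oplus\bC$ for which $\iota(s,t)=(N(s),st,N(t))$. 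Hence ${}^t\mu_2(w)=\tilde\iota(w,\cdot)$ has the same octonionic shape as $\lambda_2(x)=\tilde\kappa(x,\cdot)$, and every computation in the proof of Proposition~\ref{prop:fu-} can be transcribed for ${}^t\mu_2$.

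With this dictionary, points (a), (b) and (c) are immediate: Igusa's classification \cite{igusa} applied to $\E_1^\vee$ gives, just as before, that $w$ has rank one \iff $\iota(w)=0$ \iff ${}^t\mu_2(w)$ has rank $5$, and that $w$ has rank two \iff ${}^t\mu_2(w)$ has rank $9$; since a linear map and its transpose have equal rank, these are exactly the assertions about $\mu_2(w)$. In (d) and (e) the roles of kernel and image are exchanged compared with Proposition~\ref{prop:fu-}, which is simply the effect of transposition: one has $\Ker{}^t\mu_2(w)=(\im\mu_2(w))^{\perp}$ in $\E_1^\vee$ and $\im{}^t\mu_2(w)=(\Ker\mu_2(w))^{\perp}$ in $\E_2^\vee$, the latter orthogonal being computed through the self-duality of the vector representation $\E_2$. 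For (e) this yields the statement cleanly: for $w,z$ of rank one the subspaces $\Ker\mu_2(w)$ and $\Ker\mu_2(z)$ are maximal isotropic $5$-planes of $\E_2$ lying in one family, so they meet in odd dimension; the hypothesis $\dim(\Ker\mu_2(w)\cap\Ker\mu_2(z))\geq4$ forces them to coincide, and one checks as in Proposition~\ref{prop:fu-}(e) that this makes $w$ and $z$ proportional.

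I expect (d) to be the only real obstacle, because there the sharp constant cannot be read off formally from the transpose. The identity $\im\mu_2(w)\cap\im\mu_2(z)=(\Ker{}^t\mu_2(w)+\Ker{}^t\mu_2(z))^{\perp}$ expresses the intersection of the images through the \emph{sum} of the kernels of ${}^t\mu_2$, whereas the argument of Proposition~\ref{prop:fu-}(d) naturally bounds the \emph{intersection} of those kernels; the two are not interchangeable, so the bound $\dim(\im\mu_2(w)\cap\im\mu_2(z))\leq 3$ must be obtained by a direct analysis. I would therefore normalise by $\Spin_{10}\times\C^{*}$ so that $w=(1,0)$, compute the image $\im\mu_2(1,0)\subset\E_1$ explicitly in the octonionic model, and then use the hypothesis that every nontrivial combination of $w$ and $z$ has rank two — equivalently, that the pencil spanned by $w,z$ avoids the rank-one quadric $\{\iota=0\}$ — to control the intersection, exactly in the spirit of the maximal-isotropic dimension count of Proposition~\ref{prop:fu-}(d). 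This dimension bookkeeping is the delicate step; once it is settled, (a)--(e) for $\mu_2$ follow as transcriptions of the corresponding statements for ${}^t\mu_2\simeq\lambda_2$.
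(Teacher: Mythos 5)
Items (a), (b), (c) and (e) of your proposal are correct, and they are in substance the paper's own proof: the paper's entire argument for Proposition~\ref{prop:fu+} is the phrase ``with the same proof'', i.e.\ exactly the dualization you describe, the octonionic model for $\iota$ being obtained as in Proposition~\ref{prop:spin-10}. It is worth making explicit why your Ker/Im exchange is legitimate in (e). Writing $A^0$ for the annihilator of a subspace $A$, one has $\im {}^t\mu_2(w)=(\Ker\mu_2(w))^0$, and for $w,z$ of rank one both kernels are $5$-dimensional in the $10$-dimensional space $\E_2$, so
$$
\dim\big(\im {}^t\mu_2(w)\cap\im {}^t\mu_2(z)\big)=10-\dim\big(\Ker\mu_2(w)+\Ker\mu_2(z)\big)=\dim\big(\Ker\mu_2(w)\cap\Ker\mu_2(z)\big).
$$
Thus (e) is \emph{equivalent} to Proposition~\ref{prop:fu-}(e) applied to ${}^t\mu_2$; your maximal-isotropic-planes argument is a correct, slightly more geometric, rendering of this.

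Item (d) is where your proposal has a genuine gap: what you offer is a plan, not a proof, and the plan cannot be completed, because \emph{item (d) is false as stated}. Your own remark pinpoints the obstruction: since $\im\mu_2(w)=(\Ker{}^t\mu_2(w))^0$ and here the kernels are $7$-dimensional in the $16$-dimensional space $\E_1^*$ (not half-dimensional, contrary to the situation in (e)),
$$
\dim\big(\im\mu_2(w)\cap\im\mu_2(z)\big)=16-\dim\big(\Ker{}^t\mu_2(w)+\Ker{}^t\mu_2(z)\big)=2+\dim\big(\Ker{}^t\mu_2(w)\cap\Ker{}^t\mu_2(z)\big),
$$
so transposing Proposition~\ref{prop:fu-}(d) only yields the bound $5$, and the value $4$ really occurs. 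Concretely, work in the model where $\iota(s,t)=(N(s),st,N(t))$ and ${}^t\mu_2$ is given by the polarization $\tilde\iota$ of $\iota$. Take $w=(1,0)$ and $z=(a,b)$ with $b\neq0$ null ($N(b)=0$) and $a\notin L(b)\oplus\C\cdot 1$, where $L(b):=\{s\in\bO:\ sb=0\}$. Multiplication by a nonzero null octonion has rank $4$ (this is the dimension count underlying the proof of Proposition~\ref{prop:fu-}(b)), so $L(b)$ is $4$-dimensional and $L(b)\oplus\C\cdot 1$ is $5$-dimensional (the sum is direct because $1\cdot b=b\neq0$); hence such an $a$ exists. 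Every nontrivial combination $\lambda w+\mu z=(\lambda+\mu a,\mu b)$ has rank two: for $\mu=0$ because $\iota(\lambda,0)=(\lambda^2,0,0)\neq0$, and for $\mu\neq0$ because $\iota(\lambda+\mu a,\mu b)=(N(\lambda+\mu a),\mu(\lambda+\mu a)b,0)=0$ would force $\lambda+\mu a\in L(b)$, i.e.\ $a\in L(b)\oplus\C\cdot 1$. On the other hand,
$$
\Ker{}^t\mu_2(w)\cap\Ker{}^t\mu_2(z)\supset\big\{(s,0)\ :\ s\in L(b),\ \scal{s,1}=\scal{s,a}=0\big\},
$$
a space of dimension at least $4-2=2$; since both kernels are $7$-dimensional by item (c), the displayed identity gives $\dim\big(\im\mu_2(w)\cap\im\mu_2(z)\big)\geq4>3$.

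This is harmless for the paper and for your write-up: item (d) of Proposition~\ref{prop:fu+} is never used. Lemma~\ref{lem:ranks} and Proposition~\ref{prop:vanishing} invoke only (a), (b), (c) and (e) of Proposition~\ref{prop:fu+}; the ``pencil'' statement actually used downstream is Proposition~\ref{prop:fu-}(d), which concerns $\ovb_2$, not $\ovg_2$. The right move is therefore to weaken the bound in (d) from $3$ to $5$ (which your transposition argument does prove) or simply to drop (d); with that emendation your proof is complete and coincides with the paper's.
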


\medskip

\subsubsection{Maximal representations}

Let $\rho:\G\pfd {\rm E}_{6(-14)}$ be a reductive representation.
We may therefore
consider the  the Higgs bundle $(\ovE,\ovt)$ on $X$ and its pull-back $(E,\t)$ on $\PTX$
associated to  $\r$  and the representation of $\rm E_{6}$ on
$\E=\E_0\oplus\E_1\oplus\E_2$ as in Section~\ref{sec:MW}.

Recall that the components of the Higgs field $\ovt$ are
$$
P_K(\fu_-)\ni\ovb=:(\ovb_1,\ovb_2)\in\Big(\Hom(\ovE_0,\ovE_1)\otimes\Omega^1_X\Big)\oplus\Big(\Hom(\ovE_1,\ovE_2)\otimes\Omega^1_X\Big)
$$
and
$$
P_K(\fu_+)\ni\ovg=:(\ovg_1,\ovg_2)\in\Big(\Hom(\ovE_1,\ovE_0)\otimes\Omega^1_X\Big)\oplus\Big(\Hom(\ovE_2,\ovE_1)\otimes\Omega^1_X\Big)
$$

To lighten the notation, the fibers of the bundles $\ovE$, $\ovE_0$, $\ovE_1$ and
$\ovE_2$ above some $x\in X$ will also be denoted by $\ovE$, $\ovE_0$, $\ovE_1$ and
$\ovE_2$.

Propositions \ref{prop:fu-} and \ref{prop:fu+} immediately imply  the following:

\begin{lemm}\label{lem:ranks} Let $x\in X$ and $\xi$ be a holomorphic tangent vector at $x$.

  As an element of $\Hom(\ovE_1,\ovE_2)$, $\ovb_2(\xi)$ has rank
  $0$, $5$ or $9$. Moreover:
\begin{enumerate}[(i)]
\item If $\ovb(\xi)$ has rank 1, i.e. $\ovb_1(\xi)\neq 0$ but $\ovb_2(\xi)\ovb_1(\xi)=0$, then $\ovb_2(\xi):\ovE_1\pfd \ovE_2$ has rank $5$;

\item If $\ovb(\xi)$ has rank $2$, i.e. if $\ovb_2(\xi)\ovb_1(\xi)\neq 0$, then $\ovb_2(\xi):\ovE_1\pfd \ovE_2$ has rank $9$;

\item If any non trivial linear combination of $\ovb(\xi)$ and $\ovb(\eta)$ has rank
  $2$, then we have $\dim(\Ker\ovb_2(\xi)\cap\Ker\ovb_2(\eta))\leq 3$.
\end{enumerate}

Similarly, as an element of $\Hom(\ovE_2,\ovE_1)$, $\ovg_2(\xi)$ has rank
  $0$, $5$ or $9$. Moreover:
\begin{enumerate}[(a)]
\item If $\ovg(\xi)$ has rank $1$, i.e. $\ovg_2(\xi)\neq 0$ but $\ovg_1(\xi)\ovg_2(\xi)=0$, then $\ovg_2(\xi):\ovE_2\pfd \ovE_1$ has rank $5$;
\item If $\ovg(\xi)$ has rank $2$, i.e. if $\ovg_1(\xi)\ovg_2(\xi)\neq 0$, then $\ovg_2(\xi):\ovE_2\pfd \ovE_1$ has rank $9$;
\item If $\ovg(\xi)$ and $\ovg(\eta)$ have rank $1$ and $\dim (\Ker \ovg_2(\xi) \cap \Ker \ovg_2(\eta)) \geq 4$, then
$\ovg(\xi)$ and $\ovg(\eta)$ are colinear.
\end{enumerate}
\end{lemm}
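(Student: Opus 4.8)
The plan is to reduce every assertion to the purely algebraic Propositions~\ref{prop:fu-} and~\ref{prop:fu+} by working fiberwise. All the statements concern the value of the Higgs field at a single point $x\in X$, so I would fix such an $x$ and choose a frame $p\in(P_K)_x$; this identifies the fiber of $\ovE$ with the model $\E=\E_0\oplus\E_1\oplus\E_2$ compatibly with the grading (the grading being $K$-invariant, this identification respects each $\ovE_i\simeq\E_i$). Contracting the $\Omega^1_X$-valued section $\ovb$ with the tangent vector $\xi$ and transporting through $p$, the endomorphism $\ovb(\xi)\in P_K(\fu_-)_x$ becomes an element $v\in\fu_-$, and its two graded components become $\ovb_1(\xi)=\lambda_1(v):\E_0\to\E_1$ and $\ovb_2(\xi)=\lambda_2(v):\E_1\to\E_2$.

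The one point requiring care is that the rank of $\ovb(\xi)$ in the sense of Subsection~\ref{subsec:setup} agrees with the rank of $v$ as an element of $\fu_-$ in the sense of Subsection~\ref{subsec:DOS}. This is exactly Proposition~\ref{prop:rang-y}: both numbers equal the largest $k$ for which $v^k$ does not vanish on $\E_0$, so that $\ovb(\xi)$ has rank $1$ precisely when $\ovb_1(\xi)\neq 0$ but $\ovb_2(\xi)\ovb_1(\xi)=0$, and rank $2$ precisely when $\ovb_2(\xi)\ovb_1(\xi)\neq 0$. Granting this, statement~(i) is Proposition~\ref{prop:fu-}(b), statement~(ii) is Proposition~\ref{prop:fu-}(c), and the assertion that $\ovb_2(\xi)=\lambda_2(v)$ has rank $0$, $5$ or $9$ follows from the same two items together with the trivial case $v=0$. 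For~(iii), the vectors $\xi$ and $\eta$ lie in the same tangent space $T_xX$, so $\ovb(\xi)$ and $\ovb(\eta)$ transport through the \emph{single} frame $p$ to two elements of $\fu_-$; the hypothesis then says that every nontrivial linear combination of these has rank $2$, and Proposition~\ref{prop:fu-}(d) yields $\dim(\Ker\ovb_2(\xi)\cap\Ker\ovb_2(\eta))\leq 3$.

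For the component $\ovg$ I would argue in the dual picture of Remark~\ref{rem:dual}: through the same frame $p$, the element $\ovg(\xi)\in P_K(\fu_+)_x$ becomes $w\in\fu_+\simeq\E_1^*$, and $\ovg_2(\xi):\ovE_2\to\ovE_1$ becomes $\mu_2(w)$. The rank of $\ovg(\xi)$, defined in Subsection~\ref{subsec:setup} through the transpose action on $\E_0^\vee\to\E_1^\vee\to\E_2^\vee$, coincides with the rank of $w$ by the dual version of Proposition~\ref{prop:rang-y}. Statements~(a), (b) and the possible ranks $0,5,9$ of $\ovg_2(\xi)$ are then Proposition~\ref{prop:fu+}(b),(c), and statement~(c) is Proposition~\ref{prop:fu+}(e) applied to $w=\ovg(\xi)$ and $z=\ovg(\eta)$ (again legitimate because $\xi$ and $\eta$ share a tangent space, hence a common frame). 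I do not expect any genuine difficulty here: all the substance sits in the algebraic Propositions~\ref{prop:fu-} and~\ref{prop:fu+}, and the only delicate bookkeeping is the identification of the geometric rank of $\ovb(\xi)$ (resp. $\ovg(\xi)$) with the representation-theoretic rank of the corresponding element of $\fu_-$ (resp. $\fu_+$), which is furnished by Proposition~\ref{prop:rang-y} and its dual.
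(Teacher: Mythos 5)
Your proposal is correct and takes essentially the same route as the paper: there, Lemma~\ref{lem:ranks} is stated as an \emph{immediate} consequence of Propositions~\ref{prop:fu-} and~\ref{prop:fu+}, and your argument simply spells out the verification left implicit, namely the fiberwise identification of $\ovE_x$ with $\E$ via a frame of $P_K$ and the matching of the geometric rank of $\ovb(\xi)$ (resp.\ $\ovg(\xi)$) with the algebraic rank in $\fu_-$ (resp.\ $\fu_+$) through Proposition~\ref{prop:rang-y} and its dual. The bookkeeping is accurate, including the point that $\xi$ and $\eta$ live in the same fiber so a single frame transports both elements at once.
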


\medskip

Thanks to this lemma, in case of equality in the Milnor-Wood inequality, we may prove

 \begin{prop}\label{prop:vanishing}
  If $\deg_\T(E_0)=\deg_\T(L^\vee)$ and $x\in\cal R(\ovb)$, then for all $\xi\in T_{X,x}$, $\ovg(\xi)=0$.

If $\deg_\T(E_0)=-\deg_\T(L^\vee)$ and $x\in\cal R(\ovg)$, then for all $\xi\in T_{X,x}$, $\ovb(\xi)=0$.
\end{prop}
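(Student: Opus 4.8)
The plan is to argue pointwise and to reduce the vanishing of $\ovg$ to a rank computation governed by Lemma~\ref{lem:ranks}. I will treat the first statement; the second is entirely symmetric, obtained by replacing $\ovb$ by $\ovg$, $\E$ by $\E^\vee$, and invoking Proposition~\ref{prop:fu+} in place of Proposition~\ref{prop:fu-}. So assume $\deg_\T(E_0)=\deg_\T(L^\vee)$ and fix $x\in\cal R(\ovb)$. First I note that we are exactly in the equality case of Theorem~\ref{theo:MW}: since $p=2$ and $\rk\ovb\le p$, the equation $\deg_\T(E_0)=\deg_\T(L^\vee)$ forces $\rk\ovb=2$ and makes every inequality in Theorem~\ref{theo:MW} an equality. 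By definition of $\cal R(\ovb)$, every nonzero $\xi\in T_{X,x}$ is then a regular direction, so $\ovb(\xi)$ has rank $2$ and Lemma~\ref{lem:ranks}(ii) gives that $\ovb_2(\xi)\colon\ovE_1\to\ovE_2$ has rank $9$; in particular $\ker\ovb_2(\xi)$ is $7$-dimensional. Moreover Proposition~\ref{prop:equal} applies: along each regular leaf the splitting $E=V_\xi\oplus V_\xi^\perp$ is $\t$-invariant and orthogonal, and since both summands are graded, each of $\ovb(\xi)$ and $\ovg(\xi)$ preserves $V_\xi$ and $V_\xi^\perp$ separately.

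The core is then a rank bookkeeping for $\ovg_2(\xi)$. The identity $\ovb_2(\xi)\,\ovg_2(\xi)=0$, equivalently $\im\ovg_2(\xi)\subseteq\ker\ovb_2(\xi)$, forces $\rk\ovg_2(\xi)\le\dim\ker\ovb_2(\xi)=7$; since Lemma~\ref{lem:ranks} allows only the values $0,5,9$ for $\ovg_2(\xi)$, this leaves $\rk\ovg_2(\xi)\in\{0,5\}$, i.e. $\ovg(\xi)$ has rank $0$ or $1$. It remains to exclude rank $1$: if $\ovg(\xi)$ had rank $1$ then $\ovg_2(\xi)$ would have rank $5$, whereas the grade-preserving identity $\ovg_2(\xi)\,\ovb_2(\xi)=\ovb_1(\xi)\,\ovg_1(\xi)$ is contradictory. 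Indeed its right-hand side factors through the line $\ovE_0$, hence has rank $\le 1$, while its left-hand side is the composite of the corank-$5$ map $\ovg_2(\xi)$ with the rank-$9$ map $\ovb_2(\xi)$, so $\dim\ovg_2(\xi)\bigl(\im\ovb_2(\xi)\bigr)\ge 9-5=4$. Thus $\ovg_2(\xi)=0$, and since $\ovg_2(\xi)=0$ implies $\ovg(\xi)=0$ by Proposition~\ref{prop:fu+}(a)--(b), we conclude $\ovg(\xi)=0$ for every $\xi$, that is $\ovg=0$ at $x$.

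The two displayed identities are precisely the $\ovE_2$- and $\ovE_1$-components of the commutation relation $[\ovb(\xi),\ovg(\xi)]=0$, and establishing this per-direction commutation is the main obstacle. The Higgs integrability $[\t,\t]=0$ only yields the polarized relation $[\ovb(\xi),\ovg(\eta)]=[\ovb(\eta),\ovg(\xi)]$, which degenerates to a tautology at $\eta=\xi$; likewise the abelianity of the image of the differential gives nothing beyond this. The missing rigidity must therefore come from the equality case, namely from the $\t$-invariant orthogonal decomposition $E=V_\xi\oplus V_\xi^\perp$ of Proposition~\ref{prop:equal} together with the fact that $V_\xi$ is built from the $\fsl_2$-filtration of $\ovb(\xi)$ (here $\dim V_{1}=8$, so $\ovb_2$ restricts to an injection $V_{1}^\perp\hookrightarrow V_{2}^\perp$ of rank $8$). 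The delicate point I expect is to show that the block-diagonal form of $\ovb(\xi)$ and $\ovg(\xi)$ for this splitting, combined with the polarized identity as $\xi$ ranges over the projective fibre $\PTX_x$, upgrades $[\ovb(\xi),\ovg(\eta)]=[\ovb(\eta),\ovg(\xi)]$ to the honest vanishing $[\ovb(\xi),\ovg(\xi)]=0$ on the relevant graded pieces. Once this is secured, the rank count above closes the argument at each point of $\cal R(\ovb)$, and Proposition~\ref{prop:equal}(2) guarantees that this locus is dense.
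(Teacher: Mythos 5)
Your conditional rank count is correct, but the proposal has a genuine gap, and it is exactly the one you flag yourself: the diagonal identities $\ovb_2(\xi)\,\ovg_2(\xi)=0$ and $\ovg_2(\xi)\,\ovb_2(\xi)=\ovb_1(\xi)\,\ovg_1(\xi)$, i.e.\ the graded components of $[\ovb(\xi),\ovg(\xi)]=0$, are never established, and no amount of "upgrading" the polarized identity along the lines you sketch can produce them for free. First, as you observe, $[\t,\t]=0$ (equivalently, abelianity of the image of the differential) is trivial on the diagonal $\eta=\xi$. Second, the diagonal commutation cannot follow from any ingredient that is available for \emph{all} reductive representations: combined with your rank count it yields $\ovg(\xi)=0$, i.e.\ holomorphy of the harmonic map, which fails for general reductive non-maximal representations (already for $\SL_2$-type Higgs fields $\t=\b+\g$ with $\g$ a nonzero quadratic differential one has $[\b(\xi),\g(\xi)]\neq 0$ while $[\t,\t]=0$ holds identically). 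So the commutation is, in this setting, \emph{equivalent} to the conclusion you want, and your argument reduces the proposition to an unproven statement of the same strength. Nor does Proposition~\ref{prop:equal}~(1) give it: invariance of $V(\xi)\oplus V(\xi)^\perp$ under $\t(\xi)$ only says $\ovg_2(\xi)(V_2(\xi))\subset V_1(\xi)$ and $\ovg_2(\xi)(V_2(\xi)^\perp)\subset V_1(\xi)^\perp$, which is far from $\ovb_2(\xi)\ovg_2(\xi)=0$ (note $\ovb_2(\xi)(V_1(\xi))\subset V_2(\xi)$, so nothing forces the composite to vanish).

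The paper's proof never proves, and never uses, the diagonal commutation; the missing idea is to run the case analysis directly on the possible ranks $\{0,5,9\}$ of $\ovg_2$, exploiting the polarized identities with \emph{two distinct} directions. (i) If some $\ovg_2(\xi)$ had rank $9$: restricting the second polarized identity to the $\geq 6$-dimensional subspace $W=\Ker\ovg_1(\xi)\cap\Ker\ovb_2(\xi)$, for arbitrary $\eta$ one gets $\ovb_1(\xi)\ovg_1(\eta)+\ovg_2(\xi)\ovb_2(\eta)=0$ on $W$, whence $\dim\bigl(\Ker\ovb_2(\xi)\cap\Ker\ovb_2(\eta)\bigr)\geq 4$, contradicting Lemma~\ref{lem:ranks}~(iii). (ii) If every $\ovg_2(\xi)$ had rank $5$: combining the invariance of $V(\xi)^\perp$ (Proposition~\ref{prop:equal}~(1)), the \emph{third} polarized identity, and injectivity of $\ovb_2(\eta)$ on $V_1(\xi)^\perp$ for $\eta$ close to $\xi$, one gets $\dim\bigl(\Ker\ovg_2(\xi)\cap\Ker\ovg_2(\eta)\bigr)\geq 4$, so $\ovg(\xi)$ and $\ovg(\eta)$ are colinear by Lemma~\ref{lem:ranks}~(c), which by linearity in $\eta$ (and $n\geq 2$) forces some $\ovg(\eta)=0$, a contradiction. (iii) Hence $\ovg_2(\xi)=0$ for some $\xi$, so $\ovg(\xi)=0$, and then the second polarized identity with this fixed $\xi$ and arbitrary $\eta$ reads $\ovb_1(\xi)\ovg_1(\eta)=\ovg_2(\eta)\ovb_2(\xi)$; the left side has rank $\leq 1$ while $\Im\ovb_2(\xi)$ is $9$-dimensional, so $\rk\ovg_2(\eta)\leq 2$, hence $\ovg_2(\eta)=0$ for all $\eta$. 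Your plan discards exactly the cross-direction information on which each of these three steps rests.
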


\begin{proof}
We prove only the first assertion, the proof of the second one follows exactly the same lines. The
letters $\xi$ and $\eta$ will denote (holomorphic) tangent
vectors at $x$.

The equality $\deg_\T(E_0)=\deg_\T(L^\vee)$  and Theorem~\ref{theo:MW} imply that the generic rank
of $\b$ on $\PTX$ is 2. Therefore, since
$x$ belongs to the regular locus $\cal R(\ovb)$ of $\ovb$, for all $\xi\neq 0$ in $T_{X,x}$, the
rank of $\ovb(\xi)$ is 2, so that the rank of $\ovb_2(\xi)$ is 9 by Lemma~\ref{lem:ranks}.

\smallskip

We will make a crucial use of the
integrability relation $[\ovt,\ovt]=0$ of the Higgs field $\ovt$. This relation is equivalent to the
following three conditions:
$$
\left\{
\begin{array}{ll}
\ovg_1(\xi)\ovb_1(\eta) = \ovg_1(\eta)\ovb_1(\xi) & \mbox{ in }  \End(\ovE_0)\\
\ovb_1(\xi)\ovg_1(\eta)+\ovg_2(\xi)\ovb_2(\eta) =  \ovb_1(\eta)\ovg_1(\xi)+\ovg_2(\eta)\ovb_2(\xi) & \mbox{ in }  \End(\ovE_1) \\
\ovb_2(\xi)\ovg_2(\eta) = \ovb_2(\eta)\ovg_2(\xi) & \mbox{ in } \End(\ovE_2)
\end{array}
\right.
$$
which hold for all $\xi,\eta$.

\smallskip

Suppose first that there exists $\xi$ such that $\ovg_2(\xi):\ovE_2\pfd \ovE_1$ has rank 9. Consider the subspace
$W:=\Ker\ovg_1(\xi)\cap\Ker \ovb_2(\xi)\subset \ovE_1$. Since $\dim \ovE_1=16$, $\dim \Ker\ovg_1(\xi)=15$ and
$\dim\Ker\ovb_2(\xi)=7$, we have $\dim W\geq 6$. On this subspace, the second integrability condition
reads $\ovb_1(\xi)\ovg_1(\eta)+\ovg_2(\xi)\ovb_2(\eta) =0$ for all $\eta$. Therefore $\ovg_2(\xi)\ovb_2(\eta)(W)\subset \ovE_1$ is
1-dimensional. Because of our assumption on the rank of $\ovg_2(\xi)$, $\ovb_2(\eta)(W)$ is of
dimension at most 2, and this implies that $\dim W\cap\Ker \ovb_2(\eta)\geq 4$, hence that $\dim
\Ker\ovb_2(\xi)\cap\Ker\ovb_2(\eta)\geq 4$. We get a contradiction with Lemma~\ref{lem:ranks}$(iii)$.

\smallskip

Suppose now that for all $\xi\neq 0$, $\ovg_2(\xi)$ has rank 5. Fix $\xi\neq 0$, and let
$[\xi]$ be the class of $\xi$ in the fiber of $\PTX$ above $x$. Let  $V(\xi)=V_0(\xi)\oplus
V_1(\xi)\oplus V_2(\xi)$ be the fiber above $[\xi]$ of the subbundle $V$ of the Higgs bundle
$(E,\t)$ on $\PTX$. We have
$$
\left\{
\begin{array}{l}
V_0(\xi)=E_0=\ovE_0 \\
V_1(\xi)=E_1\cap F_0=E_1\cap(\Ker\b_{[\xi]}\oplus\Ker\b_{[\xi]}^2\cap\Im\b_{[\xi]})=\Ker\ovb_2(\xi)\oplus\Im\ovb_1(\xi)\\
V_2(\xi)=E_2\cap F_{-2}= E_2\cap(\Ker\b_{[\xi]}\cap\Im\b^2_{[\xi]})=\Im\ovb_2(\xi)\ovb_1(\xi)
\end{array}
\right.
$$
and we know by Proposition~\ref{prop:equal}~(1) that the orthogonal complement $V_1(\xi)^\perp\oplus
V_2(\xi)^\perp$ of $V_0(\xi)\oplus V_1(\xi)\oplus V_2(\xi)$ is invariant by $\ovt(\xi)$, in particular that
$\ovg_2(\xi)$ maps $V_2(\xi)^\perp$ to $V_1(\xi)^\perp$.

By the third integrability condition, $\ovg_2(\xi)$ maps
$\Ker\ovg_2(\eta)$ in $\Ker\ovb_2(\eta)$. Hence $\ovg_2(\xi)$ maps
$V_2(\xi)^\perp\cap\Ker\ovg_2(\eta)$ to
$\Ker\ovb_2(\eta)\cap V_1(\xi)^\perp$.

But $\ovb_2(\xi)$ is injective on $V_1(\xi)^\perp$ because $\Ker\ovb_2(\xi)\subset
V_1(\xi)$. Hence for $\eta$ close to $\xi$, $\ovb_2(\eta)$ is also injective on $V_1(\xi)^\perp$, so that
$V_2(\xi)^\perp\cap\Ker\ovg_2(\eta)\subset\Ker\ovg_2(\xi)$. Now, $\dim V_2(\xi)^\perp=9$ and
$\rk\ovg_2(\eta)=5$, thus   $V_2(\xi)^\perp\cap\Ker\ovg_2(\eta)$ is at least 4
dimensional, and so is $\Ker\ovg_2(\xi)\cap\Ker\ovg_2(\eta)$. Again, this implies
by Lemma~\ref{lem:ranks}$(c)$ that $\ovg_2(\xi)$ and
$\ovg_2(\eta)$ are colinear, a contradiction since we assumed that all the $\ovg_2(\zeta)$,
$\zeta\neq 0$, have rank 5.

\smallskip

We conclude that there exists $\xi\neq 0$ such that $\ovg_2(\xi)=0$. Then also $\ovg_1(\xi)=0$ and by
the second integrability condition, for all $\eta$, $\ovb_1(\xi)\ovg_1(\eta) =
\ovg_2(\eta)\ovb_2(\xi)$. Therefore
$\ovg_2(\eta)$ has rank at most $1$ on $\Im\ovb_2(\xi)$ which is 9 dimensional in $\ovE_2$, so that
$\ovg_2(\eta)$ has
rank at most 2, hence vanishes. Therefore $\ovg_2=0$ and $\ovg=0$ identically on $T_{X,x}$.
\end{proof}

\begin{theo}\label{thm:E6}
Let $\G$ be a uniform lattice in $\SU(1,n)$ with $n\geq 2$ and $\rho$ be a maximal representation of
$\G$ in ${\rm E}_{6(-14)}$. Then $n=2$ and there exists a holomorphic or antiholomorphic $\rho$-equivariant
embedding from $\mathbb H^2_\C$ to the symmetric space $M$
associated to ${\rm E}_{6(-14)}$.
\end{theo}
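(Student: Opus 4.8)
The plan is to run the equality discussion of Theorem~\ref{theo:MW} in the special case $p=\rk M=2$ and then to read off the geometry from the octonionic model of Propositions~\ref{prop:fu-} and~\ref{prop:fu+}. First I would dispose of the sign: up to replacing $\rho$ by a suitable conjugate (equivalently, passing to the dual Higgs bundle, Remark~\ref{rem:dual}) I may assume $\tau(\rho)>0$, the case $\tau(\rho)<0$ being obtained by this symmetry and producing an \emph{anti}holomorphic map. Maximality then means equality in the rewording~(\ref{equa:rewording}), namely $\deg_\T(E_0)=\tfrac p2\,\deg_\T(L^\vee)=\deg_\T(L^\vee)$, so that we sit in the equality case of Theorem~\ref{theo:MW}. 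Since $\deg_\T(L)<0$, this equality together with the inequality $\deg_\T(E_0)+\tfrac{\rk\b}2\deg_\T(L)\le 0$ of Theorem~\ref{theo:MW} forces the generic rank $\rk\b=2$.

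Next I would show that $f$ is holomorphic. With $\rk\b=2$ the hypothesis $\deg_\T(E_0)+\tfrac{\rk\b}2\deg_\T(L)=0$ of Propositions~\ref{prop:equal} and~\ref{prop:vanishing} holds, so on the dense open set $\cal R(\ovb)$ (dense by Proposition~\ref{prop:equal}(2)) the component $\ovg$ of the Higgs field vanishes; being a holomorphic section, $\ovg\equiv 0$ on all of $X$, and by Remark~\ref{rem:real} this is exactly the statement that $f$ is holomorphic. Moreover Proposition~\ref{prop:equal}(1) produces on $\cal R(\b)$ a $\ovt$-stable orthogonal splitting $E=V\oplus V^\perp$, which by the usual argument means that $f$ is \emph{totally geodesic}, its image being a Hermitian symmetric subspace of $M$ isometric (after the normalisation $f^\star\o_M=2\,\o$) to $\mathbb H^n_\C$.

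The heart of the matter is then linear-algebraic. At $x\in\cal R(\ovb)$ the differential $df_x=\ovb_1(\cdot)$ identifies $T_xX$ with an $n$-dimensional complex subspace $U\subset\fu_-\simeq\E_1\simeq\bO\oplus\bO$, and because $x$ is regular every nonzero $u\in U$ has rank $2$, i.e.\ $\kappa(u)\neq 0$ by Proposition~\ref{prop:fu-}(a). I would then combine this with the curvature-invariance of $U$ coming from total geodesy and with the rank/kernel estimates of Proposition~\ref{prop:fu-} and Lemma~\ref{lem:ranks} to conclude $n\le 2$. Concretely, for independent $u,v\in U$ every nonzero combination has rank $2$, so Proposition~\ref{prop:fu-}(d) gives $\dim(\Ker\lambda_2(u)\cap\Ker\lambda_2(v))\le 3$, while the quadratic map $\kappa$ carries $\mathbb P U$ into the isotropic quadric of $\E_2$; the triality identities of Proposition~\ref{prop:spin-10} then prevent the existence of a third independent rank-$2$ direction inside a curvature-invariant $U$. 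With $n\ge 2$ by hypothesis this yields $n=2$.

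The main obstacle is precisely this dimension count. Cone-avoidance alone is too weak: a linear subspace of $\bO\oplus\bO\simeq\C^{16}$ missing the rank-one (pure spinor) cone, whose projectivisation is the $10$-dimensional spinor variety in $\mathbb P^{15}$, may by the projective dimension theorem have dimension as large as $5$. Upgrading the bound to $n\le 2$ is what forces the use of the full curvature-invariance supplied by total geodesy together with the octonionic/triality structure, i.e.\ of a classification of the constant-rank-two Lie triple systems in $\fu_-$ of $\mathrm E_{6(-14)}$. Once $n=2$ is established, $f\colon\mathbb H^2_\C\to M$ is itself the desired holomorphic (resp.\ antiholomorphic, when $\tau(\rho)<0$) totally geodesic $\rho$-equivariant map, the injectivity of $df$ making it an embedding.
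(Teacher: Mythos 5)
Your reduction of maximality to the equality case of Theorem~\ref{theo:MW}, the deduction $\rk\b=2$, and the proof that $\ovg\equiv 0$ (hence that $f$ is holomorphic) via Propositions~\ref{prop:vanishing} and~\ref{prop:equal}(2) coincide with the paper's own proof. The genuine gap is the step that gives $n=2$, and you have in effect flagged it yourself. As you note, avoidance of the rank-one cone only bounds $n$ by $5$: the spinor variety has dimension $10$, hence codimension $5$, in $\P\E_1\simeq\P^{15}$, so there exist $5$-dimensional subspaces of $\fu_-$ all of whose nonzero elements have rank $2$. To get from $5$ down to $2$ you appeal to ``curvature-invariance plus triality,'' i.e.\ to a classification of the Lie triple systems in $\fu_-$ consisting of rank-two elements --- but you never prove any such classification. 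The sentence ``the triality identities of Proposition~\ref{prop:spin-10} then prevent the existence of a third independent rank-$2$ direction inside a curvature-invariant $U$'' is precisely the assertion that needs proof, and nothing in Proposition~\ref{prop:spin-10}, Proposition~\ref{prop:fu-} or Lemma~\ref{lem:ranks} delivers it; indeed classifying constant-rank-two Lie triple systems in $\fu_-$ is essentially equivalent to classifying the totally geodesic holomorphic copies of $\H$ in $M$, which is the content of the statement. So the heart of your argument is a restatement of the conclusion rather than a proof of it.

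A secondary problem is your route to total geodesy: the claim that it follows ``by the usual argument'' from the $\ovt$-stable orthogonal splitting $E=V\oplus V^{\perp}$ of Proposition~\ref{prop:equal}(1) is unsupported (no such argument is given, and the splitting is only available on $\cal R(\b)$). The paper closes both gaps by a different mechanism, carried out in Subsection~\ref{subsec:proof}: once $f$ is known to be holomorphic, maximality together with the Ahlfors--Schwarz lemma \cite{Royden} forces the pointwise equality $f^\star\o_M=p\,\o$, and the equality case of that lemma makes $f$ totally geodesic; $f$ is then a \emph{tight} holomorphic totally geodesic map, and the classification of such maps in \cite{Hamlet} shows that, for target ${\rm E}_{6(-14)}$, they exist only when $n=2$. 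If you wish to avoid quoting \cite{Hamlet}, you must actually carry out the algebraic classification you allude to; as written, that step --- the one that produces $n=2$ --- is missing.
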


\begin{proof}
  By \cite{BIW}, maximal representations are reductive, and we may apply our previous results. We
  assume $\tau(\rho)>0$, the case $\tau(\rho)<0$ being handled similarly. By
  Proposition~\ref{prop:vanishing}, $\ovg$ vanishes on the regular locus ${\cal R}(\ovb)$ of $\ovb$. By
  Proposition~\ref{prop:equal}~(2), ${\cal R}(\ovb)$ is dense in $X$, so that $\ovg$ vanishes
  identically on $X$. This means that the $\rho$-equivariant harmonic map $f:\H\pfd M$ used to
  define the Higgs bundle $(\ovE,\ovt)$ is holomorphic.
\end{proof}

\subsection{Proof of the main results}\label{subsec:proof}\hfill

\medskip

In this subsection, we give detailed proofs of the theorem and corollaries given in the introduction,
although some of the arguments might be well-known to specialists.

\lpara 
We assume $\tau(\rho)>0$, the other case being similar. We may assume that $\rho$ is reductive by
\cite{BIW}.
Let then $f : \H \to M$ be a harmonic $\rho$-equivariant map
(such a map exists by \cite{CorletteFlatGBundles}).
By Theorem \ref{thm:E6}, Proposition~\ref{prop:tube} and \cite{KM}, $f$ is holomorphic.
By the Ahlfors-Schwarz lemma (cf. \cite{Royden}), since the holomorphic sectional
curvature is $-1$ on $\H$ and bounded above by $-\frac 1p$ on $M$,  we have the pointwise
inequality $f^\star \o_M\leq p \,\o$. The maximality of $\rho$ then implies that
$f^\star\o_M=p\,\o$.
Since there is equality in the Ahlfors-Schwarz lemma, $f$ is totally geodesic (see e.g. \cite{Royden}).

These properties imply that $f$ is a
so-called {\em tight} holomorphic totally geodesic map $\H\pfd M$ (as defined in \cite{Hamlet}).
\indication{In fact, $f$ is a local isometry, but since through any two points in $M$ there is a unique
geodesic, $f$ is a global isometry.}
Tight holomorphic maps between Hermitian
symmetric spaces were classified in~\cite{Hamlet}. If the symmetric space $M$ is not
irreducible, the map $f$ is tight if and only if all the induced maps to the irreducible factors of
$M$ are tight. We may therefore assume that $M$ is irreducible or equivalently that $G_\R$ is
simple. In this case, and since $n\geq 2$, tight holomorphic totally geodesic maps $\H\pfd M$ only exist when
$G_\R=\SU(p,q)$ with $q\geq pn$ or when $G_\R={\rm E}_{6(-14)}$ if $n=2$. They are deduced one from
another by composition by an element of $G_\R$.

\begin{rema}
  There is a small inaccuracy in~\cite{Hamlet}, where it is said that there are
  two ``tight regular'' (in the terminology of this paper) maximal subalgebras of
  ${\fe}_{6(-14)}$. In fact, only $\su(4,2)\subset {\fe}_{6(-14)}$ is a maximal subalgebra for
  these properties. This was confirmed to us by the
  author.
\end{rema}

This proves all the assertions of Theorem A and Corollary B except the uniqueness of the harmonic
map $\H\pfd M$ that is
$\rho$-equivariant. To prove
it, we need to have a closer look at $f$. It follows from \cite{Hamlet} (see also
\cite{KMrank2}*{Proposition 3.2})
that $f$ is equivariant with respect to a morphism of
Lie groups $\varphi : \SU(1,n) \to G_\R$ and that up to conjugacy of $\rho$, we may assume that $f$
and $\varphi$ are as follows:
\begin{itemize}
\item for $G_\R=\SU(p,q)$ with $q\geq pn$, $\varphi$ is the composition
$$
\SU(1,n)\hookrightarrow
{\SU(1,n)\times\cdots\times\SU(1,n)}
\hookrightarrow\SU(p,pn)\hookrightarrow
\SU(p,q)\,;
$$

\item for $G_\R={\rm E}_{6(-14)}$ and $n=2$, $\varphi$ is the composition
$$
\SU(1,2) \hookrightarrow
\SU(1,2)\times\SU(1,2)\hookrightarrow\SU(2,4)\hookrightarrow {\rm E}_{6(-14)}.
$$
where the last morphism is detailed in the proof of Lemma \ref{lemm:Z} below.
\end{itemize}

In both cases, the image $N$ of $f$ in $M=G_\R/K_\R$ is the orbit of $o=K_\R$ under
$H_\R:=\varphi(\SU(1,n))\subset G_\R$.

We now describe the centralizer $Z_\R$ of $H_\R$ in $G_\R$.
In case $G_\R = \SU(p,q)$, let $GZ_\R$ denote the group $\U(p) \times \U(q-pn)$ and let
$\chi : GZ_\R \to \U(1)$ be the character defined by $\chi(x,y)=\det(x)^{n+1} \cdot \det(y)$.
In case $G_\R = {\rm E}_{6(-14)}$, let $GZ_\R = \U(2) \times \U(2)$ and let $\chi : GZ_\R \to \U(1)$
be the character defined by $\chi(x,y)=\det(x)^{21} \cdot \det(y)^6$.  Then:

\begin{lemm}
 \label{lemm:Z}
 The centraliser $Z_\R$ of $H_\R$ in $G_\R$ is a subgroup of $K_\R$ (hence it is compact). It is
 isomorphic to the kernel of $\chi$ in $GZ_\R$.
\end{lemm}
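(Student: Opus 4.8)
The plan is to handle the two cases by a single mechanism: realize $Z_\R$ as the commutant of $H_\R$ inside the group of linear automorphisms of $\E$ that preserve the structure defining $G_\R$, and then cut this commutant down to $G_\R$ by one determinant-type character, which will turn out to be exactly $\chi$. First, compactness: by Lemma~\ref{lemm:fixator}, $Z_\R$ is the subgroup of $G_\R$ fixing $f(\H)$ pointwise, so it fixes the base point $o=f(\tilde o)\in M$, whence $Z_\R\subset\mathrm{Stab}_{G_\R}(o)=K_\R$ and $Z_\R$ is compact. Being contained in $K_\R$, every element of $Z_\R$ preserves the harmonic metric on $\E$; thus $Z_\R$ is a compact group commuting with the $H_\R$-action. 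Decomposing $\E$ into $H_\R$-isotypic components and applying Schur's lemma, the commutant of $H_\R$ in the structure group is a product of unitary groups acting on the multiplicity spaces, and the further requirement of lying in $G_\R$ (rather than in the larger isometry/similitude group) is a single character equation; this is the origin of $\ker\chi$.

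In the case $G_\R=\SU(p,q)$, the morphism $\varphi$ makes the standard module split, as an $H_\R\simeq\SU(1,n)$-module, into $\C^{p,q}=\C^{1,n}\otimes\C^{p}\oplus\C^{q-pn}$. Matching the signature $(p,pn)$ of $(\C^{1,n})^{\oplus p}$ forces the Hermitian form to be positive definite on the multiplicity space $\C^{p}$ and negative definite on $\C^{q-pn}$. Since $\C^{1,n}$ is an irreducible $H_\R$-module and the trivial module is the only other constituent, Schur's lemma identifies the commutant of $H_\R$ in $\U(p,q)$ with $\U(p)\times\U(q-pn)=GZ_\R$, where $(x,y)$ acts as $\id_{\C^{1,n}}\otimes x$ on the first summand and as $y$ on the second. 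This action preserves the positive/negative splitting, which re-proves directly that $GZ_\R\subset K_\R$. Finally $(x,y)\in\SU(p,q)$ if and only if $\det_{\C^{p,q}}=1$, i.e. $\det(x)^{n+1}\det(y)=1$, because $\det(\id_{\C^{1,n}}\otimes x)=\det(x)^{n+1}$; this is precisely $\chi(x,y)=1$, so $Z_\R=\ker\chi$.

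In the case $G_\R={\rm E}_{6(-14)}$, the first task is to make the morphism $\SU(2,4)\hookrightarrow{\rm E}_{6(-14)}$ explicit. I would do this through the octonionic model $\E_1\simeq\bO\oplus\bO$, $\E_2\simeq\bC\oplus\bO\oplus\bC$ and the $\Spin_{10}$-equivariant quadratic map of Proposition~\ref{prop:spin-10}, realizing $\SU(2,4)$ as the subgroup acting compatibly with this model and reading off the diagonal $H_\R=\varphi(\SU(1,2))$. One then restricts $\E=\E_0\oplus\E_1\oplus\E_2$ to the compact subgroup commuting with $H_\R$, identifies it by Schur's lemma with $\U(2)\times\U(2)=GZ_\R$, and computes the character by which $GZ_\R$ acts on $\det\E=\Lambda^{27}\E$; the condition that this character be trivial, equivalently that $GZ_\R$ land in ${\rm E}_6\subset\SL(\E)$ and not merely in the larger norm-similitude group, is $\chi(x,y)=\det(x)^{21}\det(y)^{6}=1$, so again $Z_\R=\ker\chi$. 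The exponents $21$ and $6$ are exactly the contributions of the two $\U(2)$-factors to $\det\E$, organized by the weights of the model.

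The main obstacle is this last case, and specifically the explicit construction of $\SU(2,4)\hookrightarrow{\rm E}_{6(-14)}$ together with the precise way $GZ_\R=\U(2)\times\U(2)$ sits inside ${\rm E}_6$: once the action on $\E=\E_0\oplus\E_1\oplus\E_2$ is written in the octonionic coordinates of Proposition~\ref{prop:spin-10}, identifying the commutant and evaluating $\det\E$ to extract the exponents $21$ and $6$ is the delicate computation, whereas the compactness statement and the whole $\SU(p,q)$ computation are routine by comparison.
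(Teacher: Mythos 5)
Your overall strategy---Schur's lemma on the $H_\R$-isotypic decomposition of $\E$, followed by a single determinant-type character equation cutting the commutant down to $G_\R$---is exactly the paper's, and your $\SU(p,q)$ case is complete (indeed slightly more detailed than the paper's, since you make explicit the signature argument forcing the form to be definite on the multiplicity spaces). But two things go wrong. The first is a circularity: you deduce $Z_\R\subset K_\R$ from Lemma~\ref{lemm:fixator}, whereas in the paper that lemma comes \emph{after} Lemma~\ref{lemm:Z} and its proof of the inclusion $Z_\R\subset{\rm Fix}(N)$ opens with ``$z\cdot o=o$ since $Z_\R\subset K_\R$''---that is, it uses precisely the statement you are proving. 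The paper's logic runs the other way: the explicit identification $Z_\R\subset GZ_\R$, a group visibly acting unitarily and preserving the positive/negative splitting (hence lying in $K_\R$), is what yields compactness, and Lemma~\ref{lemm:fixator} is then a consequence. In the $\SU(p,q)$ case your own signature argument re-proves $Z_\R\subset K_\R$, so the circularity is harmless there; but the phrase ``the compact subgroup commuting with $H_\R$'' in your ${\rm E}_6$ discussion leans on it, and you would need an independent substitute (e.g.\ \cite{BIW}*{Theorem 3}, as Remark~\ref{rema:compactness} notes, or a fixed-point argument showing ${\rm Fix}_M(\varphi(\U(n)))=\{o\}$), or simply let compactness fall out of the explicit description as in the paper.

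The second and more serious gap is that the ${\rm E}_{6(-14)}$ case is a plan, not a proof: you yourself flag the ``delicate computation''---making $\varphi$ explicit, identifying the commutant with $\U(2)\times\U(2)$, and extracting the exponents $21$ and $6$---as the main obstacle, and leave it undone; that computation \emph{is} the content of the lemma in this case. Moreover, the route you propose (the $\Spin_{10}$/octonionic coordinates of Proposition~\ref{prop:spin-10}) is adapted to the decomposition of $\E$ under $K$, not under the subgroup through which $\varphi$ factors, so the commutant is hard to see there. The paper instead uses Manivel's model, in which $\SU(2,4)\times\SU(2)$ is manifest: $\E=\wedge^2\bU\oplus\bU\otimes\bA$ with $\dim\bU=6$, $\dim\bA=2$; writing $\bU=\bV\otimes\bB$ (with $\dim\bV=3$, $\dim\bB=2$) to realize the diagonal $\SU(1,2)$ gives
$$
\E\ \simeq\ \wedge^2\bV\otimes S^2\bB\ \oplus\ S^2\bV\otimes\wedge^2\bB\ \oplus\ \bV\otimes\bA\otimes\bB ,
$$
after which Schur's lemma pins the centralizer inside $\U(\bA)\times\U(\bB)$ and the character computation takes two lines: the action of $(x,y)$ on $\bV\otimes\bA\otimes\bB$ has determinant $\det(x)^6\det(y)^6$ and the action on $\wedge^2(\bV\otimes\bB)$ has determinant $\det(y)^{15}$, which assembles into the character $\chi$. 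Without this (or an equivalent) branching of $\E$, the identification $Z_\R\simeq\ker\chi$ in the exceptional case remains unproven.
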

\begin{proof}
In the case of $\SU(p,q)$, the description of $\varphi$ given above shows that the standard
representation $\C^{p+q}$ of $\SU(p,q)$, when seen as a representation of $\SU(1,n)$ via
$\varphi$, splits as
$$
\C^{p+q}=\C^{p+pn}\oplus\C^r=\C^{1+n}\otimes\C^p\oplus\C^r,
$$
where $\C^{1+n}$ is the standard representation of $\SU(1,n)$ and $r=q-pn$.
To conclude, we argue as follows. Let $g \in Z_\R$. Then $g$ yields an endomorphism
of the $H_\R$-module $\C^{1+n}\otimes\C^p\oplus\C^r$. Since by Schur's lemma
such an endomorphism will
preserve isotypic factors, we see that
$g$ must preserve the factors $\C^{1+n}\otimes\C^p$ and $\C^r$. Moreover it is known that
it must act by an element of $U(p)$ on the first factor, so that it belongs to $GZ_\R$.

In the case of ${\rm E}_{6(-14)}$, we use a model of the $27$-dimensional representation $\E$ given
by Manivel in \cite{manivel}*{Example 3 p.464}.
There is a subgroup in ${\rm E}_{6(-14)}$ isomorphic to $\SU(2,4) \times \SU(2)$ and $\E$ splits as
$\wedge^2 \bU \oplus \bU \otimes \bA$, where
$\bU$ resp. $\bA$ have complex dimension $6$ resp. $2$.
Here we restrict further to $\SU(1,2) \times \SU(2)$, where the first factor $\SU(1,2)$ is
diagonally embedded in $\SU(2,4)$, meaning that the representation
$\bU$ splits as $\bV \otimes \bB$, with $\dim \bV = 3$ and $\dim \bB=2$.
We get
$$
\E \simeq \wedge^2(\bV \otimes \bB) \oplus \bV \otimes \bA \otimes \bB
\simeq \wedge^2 \bV \otimes S^2 \bB \oplus S^2 \bV \otimes \wedge^2 \bB \oplus \bV \otimes \bA \otimes \bB.
$$
As in the case of $\SU(p,q)$, an element $g$ in the centralizer of $H_\Z$ will yield a
$H_\Z$-equivariant endomorphism, and will preserve each of these factors.
Since it is an element of the group $E_6$, one sees that it must be given by an
element in $\U(\bA) \times \U(\bB)$.

The computation of the character $\chi$ is done as follows. If $f=(x,y) \in \U(\bA) \times \U(\bB)$,
then the determinant of the action of $f$ on
$\E$ is the product of the determinants of the actions of $f$ on $\wedge^2(\bV \otimes \bB)$ and on
$\bV \otimes \bA \otimes \bB$.
The action on $\bV \otimes \bA \otimes \bB$ has determinant $\det(x)^6\det(y)^6$, and the action on
$\wedge^2(\bV \otimes \bB)$
has determinant $\det(y)^{15}$.
\end{proof}

\begin{rema}
 \label{rema:compactness}
 The compactness of $Z_\R$ is proved in greater generality in \cite{BIW}*{Theorem 3}.
\end{rema}

\begin{lemm} \label{lemm:fixator}
 The fixator of $N=f(\H)$ in $G_\R$ is exactly $Z_\R$. The stabilizer of $N$ in $G_\R$ is the
 almost direct product $H_\R\cdot Z_\R$.
\end{lemm}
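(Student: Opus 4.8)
The plan is to reduce both assertions to the infinitesimal isotropy action at the base point $o=K_\R\in M$, using that $N=H_\R\cdot o$ is the image of the irreducible symmetric space $\H$. Write $\fh_\R:=\mathrm{Lie}(H_\R)\cong\su(1,n)$. Since $f$ is a $\varphi$-equivariant totally geodesic embedding and $\varphi$ carries a maximal compact of $\SU(1,n)$ into $K_\R$, the morphism $\varphi$ is compatible with the Cartan decompositions, so $\fh_\R=(\fh_\R\cap\kg_\R)\oplus\fm$ with $\fm:=\fh_\R\cap\ug_\R$, and the tangent space to the orbit $N$ at $o$ is exactly $\fm\subset\ug_\R\cong T_oM$. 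As $N$ is complete and totally geodesic, $N=\exp(\fm)\cdot o$, and $\mathrm{Exp}_o\colon\fm\to N$ is a diffeomorphism. For $g\in K_\R$ one has $g\exp(X)\cdot o=\exp(\Ad(g)X)\cdot o$, so $g$ fixes $N$ pointwise if and only if $g\in K_\R$ and $\Ad(g)|_\fm=\id$. This identifies $\mathrm{Fix}(N)=\{g\in K_\R:\Ad(g)|_\fm=\id\}$.

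First I would dispatch $Z_\R\subseteq\mathrm{Fix}(N)$: by Lemma \ref{lemm:Z}, $Z_\R\subset K_\R$, so $g\cdot o=o$ for $g\in Z_\R$, and then $g\cdot(h\cdot o)=h\cdot(g\cdot o)=h\cdot o$ for all $h\in H_\R$. For the reverse inclusion $\mathrm{Fix}(N)\subseteq Z_\R$, the key point is that $\fm$ generates $\fh_\R$ as a Lie algebra. Indeed $\fm\oplus[\fm,\fm]$ is a $\theta$-stable subalgebra, and $[\fm,\fm]\oplus\fm$ is an ideal of $\fh_\R$ for the symmetric pair $(\fh_\R,\fh_\R\cap\kg_\R)$; since $\fh_\R\cong\su(1,n)$ is simple and $\fm\neq 0$, this ideal is all of $\fh_\R$, whence $[\fm,\fm]=\fh_\R\cap\kg_\R$. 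Now let $g\in\mathrm{Fix}(N)$. Then $\Ad(g)$ is a Lie algebra automorphism fixing $\fm$ pointwise, hence fixing $[\fm,\fm]$ pointwise, hence fixing $\fm\oplus[\fm,\fm]=\fh_\R$ pointwise. Thus $g$ centralises the connected group $H_\R$, i.e. $g\in Z_\R$. This gives $\mathrm{Fix}(N)=Z_\R$.

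For the setwise stabiliser, the inclusion $H_\R\cdot Z_\R\subseteq\mathrm{Stab}(N)$ is clear, as $H_\R$ preserves its orbit $N$ and $Z_\R\subseteq\mathrm{Fix}(N)$. For the converse, take $g\in\mathrm{Stab}(N)$; since $g\cdot o\in N=H_\R\cdot o$ there is $h_0\in H_\R$ with $h_0^{-1}g\cdot o=o$, so after replacing $g$ by $h_0^{-1}g$ I may assume $g\in K_\R\cap\mathrm{Stab}(N)$. Such a $g$ restricts to a holomorphic isometry of $N$ fixing $o$: it is an isometry because $g\in\mathrm{Isom}(M)$ and $N$ is totally geodesic, and holomorphic because the $G_\R$-invariant complex structure of $M$ makes $g$ act holomorphically on the complex submanifold $N$. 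The equivariance $f\circ a=\varphi(a)\circ f$ identifies $H_\R|_N$ with the full connected group $\PU(1,n)$ of holomorphic isometries of $N\cong\H$; hence $g|_N$ agrees with $h|_N$ for some $h\in H_\R\cap K_\R$. Then $h^{-1}g$ fixes $N$ pointwise, so $h^{-1}g\in\mathrm{Fix}(N)=Z_\R$ and $g\in(H_\R\cap K_\R)\cdot Z_\R$. Undoing the reduction yields $\mathrm{Stab}(N)=H_\R\cdot Z_\R$; since $Z_\R$ centralises $H_\R$ and $H_\R\cap Z_\R=Z(H_\R)$ is finite, this is an almost direct product.

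The hard part will be the last paragraph: showing that every holomorphic isometry of $N$ fixing $o$ comes from $H_\R\cap K_\R$. This rests on identifying $H_\R|_N$ with the \emph{entire} holomorphic isometry group $\PU(1,n)$ of $\H$ and on the connectedness of the latter (so that no orientation or component issue can arise) — which is exactly where the fact that $f$ is a $\varphi$-equivariant totally geodesic holomorphic (or antiholomorphic) embedding onto a genuine copy of complex hyperbolic $n$-space, acted on transitively by $H_\R$, is indispensable.
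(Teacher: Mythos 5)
Your proof is correct, and it rests on the same foundation as the paper's: the trivial inclusion $Z_\R\subseteq\mathrm{Fix}(N)$, plus the intrinsic description of $\fh_\R$ as $[\fm,\fm]\oplus\fm$, where $\fm=T_oN\subset\ug_\R$ is the Lie triple system of the totally geodesic submanifold $N$. The differences lie in how the two halves are closed, and they are worth recording. For $\mathrm{Fix}(N)\subseteq Z_\R$, the paper uses only that $\mathrm{Fix}(N)$ \emph{normalizes} $H_\R$ and then argues at the group level: for $g\in\mathrm{Fix}(N)$ and $h\in H_\R$, the commutator $ghg^{-1}h^{-1}$ lies in $H_\R$, acts trivially on $N$, hence lies in the finite center of $H_\R$, and connectedness of $H_\R$ forces it to be the identity. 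You instead linearize the condition ``$g$ fixes $N$ pointwise'' through the diffeomorphism $\mathrm{Exp}_o\colon\fm\to N$ to get $\Ad(g)|_\fm=\id$, propagate this through brackets to all of $\fh_\R$, and conclude that conjugation by $g$ is an automorphism of the connected group $H_\R$ with trivial differential, hence trivial; your route trades the paper's finite-center/commutator trick for the Cartan--Hadamard property of $\mathrm{Exp}_o$, and is arguably more direct, though both arguments lean on the connectedness of $H_\R$. For the stabilizer, the paper's one-line justification (``the automorphism group of $N$ is $H_\R$'') is precisely what you expand: reduction to $K_\R\cap\mathrm{Stab}(N)$ by transitivity, the observation that such an element restricts to a \emph{holomorphic} isometry of the complex submanifold $N$ (ruling out the antiholomorphic isometries of $\H$, a point the paper glosses over), surjectivity of $H_\R\to\PU(1,n)$, and finiteness of $H_\R\cap Z_\R=Z(H_\R)$ for the almost-direct-product structure. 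One remark: your opening claim that $\varphi$ is compatible with the Cartan decompositions (equivalently, that $\fh_\R=(\fh_\R\cap\kg_\R)\oplus\fm$) is asserted rather than proved; but this is exactly the same standard fact the paper invokes without proof when it declares $H_\R$ to be the connected subgroup with Lie algebra $[\fq_\R,\fq_\R]\oplus\fq_\R$, and it is visible by inspection for the explicit $\varphi$ described before Lemma \ref{lemm:Z}, so it does not constitute a gap.
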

\begin{proof}
 Let $o=K_\R \in N$ be the base point of $M$.
 Let us denote by ${\rm Fix}(N) \subset G_\R$ the subgroup of elements which fix all the elements in $N$.
 We want to prove that ${\rm Fix}(N)=Z_\R$. We have an inclusion $Z_\R \subset {\rm Fix}(N)$. Indeed, if $h \in H_\R$ and
 $z \in Z_\R$, then $z\cdot o = o$ since $Z_\R\subset K_\R$. Thus, since $g$ and $h$ commute, $z
 \cdot ( h \cdot o ) = h \cdot ( z \cdot o ) = h \cdot o$.

 The subgroup $H_\R$ may be defined refering only to $N$ as follows.
 Let $\fg_\R = \fk_\R \oplus \fp_\R$ be the Cartan decomposition of $\fg_\R$. The tangent space
 $T_oN$ identifies with a subspace of $\fp_\R$ that we denote by $\fq_\R$.
 The space $\fq_\R$ defines a Lie triple system, so that $\fh_\R := [\fq_\R,\fq_\R] \oplus \fq_\R
 \subset \fg_\R$ is a Lie subalgebra. Then, $H_\R$ is the connected Lie group of $G_\R$
 with Lie algebra $\fh_\R$.

 For the reverse inclusion we need to prove that ${\rm Fix}(N) \subset Z_\R$.
 It follows from the given description of $H_\R$ that $H_\R$ is normalized by ${\rm Fix}(N)$.
 Let $g \in {\rm Fix}(N)$ and $h \in H_\R$. Then the commutator
 $ghg^{-1}h^{-1}$ belongs to $H_\R$ and acts trivially on $N$. Thus, it belongs to the center of
 $H_\R$. Since this center is finite, the connexity of
 $H_\R$ implies that $ghg^{-1}h^{-1}$ is the neutral element.

 Since the automorphism group of $N$ is $H_\R$, the second assertion of the Lemma follows from the first.
\end{proof}

\lpara

\noindent{\bf Proof of Corollary C:} The facts that $\rho$ is dicrete and faithful and that
  $\rho(\G)$ acts cocompactly on $N$  follow from the $\rho$-equivariance of the totally geodesic
  embedding $f$. The reductivity of $\rho$ has been already asserted and the compactness of $Z_\R$
  was established in Lemma \ref{lemm:Z}. Now,
given $\gamma \in \Gamma$, the equivariance of $f$ w.r.t. $\rho$ and $\varphi$ means that
$\rho(\gamma)$ and $\varphi(\gamma)$ have the same action on $N$.
We let $\rhoc(\gamma)=\rho(\gamma)\varphi(\gamma)^{-1}$. This is an element of the fixator of $N$,
which is equal to the centralizer $Z_\R$ of $H_\R$ by Lemma \ref{lemm:fixator}.  Since
$\varphi(\gamma) \in H_\R$ by definition of $\varphi$, the elements $\varphi(\gamma)$ and $\rhoc(\gamma)$ commute.
It follows that $\varphi(\gamma)$ and $\rho(\gamma)$ commute, and that $\rhoc$ is a morphism of groups.
\qed

\lpara

\noindent{\bf Proof of the uniqueness of $f$:}
by the uniqueness statement for tight holomorphic
totally geodesic maps
$\H\pfd M$, we know that if $f':\H\pfd M$ is
another $\rho$-equivariant harmonic map, then there exists $g\in G_\R$ such that $f'=g\circ f$. By
$\rho$-equivariance of $f$ and $f'$, we have that
$$
\rho(\g)\circ g(x)=g\circ\rho(\g)(x),\,\forall \g\in\G\mbox{ and }\forall x\in N.
$$
It follows that $g \cdot N$ is $\rho(\Gamma)$-stable.
Thus the map $d_{g\cdot N}:N\pfd\R$, $x\mapsto d(x,g\cdot N)$, where $d$ denotes the distance in
$M$, is invariant under the cocompact action of $\rho(\G)$ on $N$. It is therefore
bounded. Since it is moreover convex (\cite{BH}*{p. 178}), it is constant, equal to $a$, say. In the same way, the map $d_N
:g\cdot N\pfd\R$, $y\mapsto d(y,N)$ is also constant equal to $a$.

If $a>0$ it follows from the sandwich
lemma (\cite{BH}*{p. 182}) that the convex hull of $N\cup g\cdot N$ in $M$ is isometric to the product $N\times
[0,a]$. This implies that there exists a tangent vector $v\in T_oM\simeq \fp_\R$, orthogonal to
$T_oN\simeq \fq_\R$ such that $[v,u]=0$ for all $u\in\fq_\R$. Indeed the
norm (for the Killing form) of $[v,u]\in \fg_\R$ is up to a constant the sectional curvature of the
plane generated by the tangent vectors $u$ and $ v$, which is $0$ since they belong to different factors
of a Riemannian product.
In this case the 1-parameter group of transvections along
the geodesic defined by $v$ is included in the centralizer $Z_\R$ of $H_\R$, a contradiction since
$Z_\R$ is compact.

Hence $a=0$ and $g\cdot N=N$. Therefore there exist $h\in H_\R$ and $z\in Z_\R$ such
that $g=hz=zh$. The above commutation relation between $\rho(\g)=\varphi(\g)\rhoc(\g)$ and $g$ on $N$
means that $\rho(\g)g\rho(\g)^{-1}g^{-1}$ fixes $N$ pointwise and hence belongs to $Z_\R$ by
Lemma~\ref{lemm:fixator}. Hence for
all $\g\in\G$ we obtain that $\varphi(\g)h\varphi(\g)^{-1}h^{-1}$ belongs to $Z_\R\cap H_\R$
(recall that $\rhoc(\gamma) \in Z_\R$). Now $\G$
is Zariski dense in $\SU(1,n)$ by the Borel density theorem and we deduce that
$\varphi(x)h\varphi(x)^{-1}h^{-1}\in Z_\R\cap H_\R$ for all $x\in\SU(1,n)$.
Since $Z_\R\cap H_\R$ is finite and
$\SU(1,n)$ is connected, $h\in Z_\R$. Therefore $g\in Z_\R$ and $f'=f$.
\qed

\begin{rema}
 \label{rema:1-connected}
 If we drop the assumption that $G$ is simply-connected, then $\E$ might no longer be a representation of $G$ and our constructions
 cannot be made. However, in this case, letting $\tilde G$ be the simply connected cover of $G$ and $\E$ the cominuscule representation
 of $\tilde G$ that we have been considering, there is an integer $k$ such that $\E^{\otimes k}$ is a representation of $G$. The
 arguments given in the article can be adapted with the representation $\E^{\otimes k}$ instead of $\E$, and the main results
 (Theorem A, Corollary B and Corollary C) remain true without the simple-connectedness assumption.
\end{rema}

\indication{
\begin{proof}
 We denote by $\Zmax$ the maximum value $\scal{\Lambda,z}$ for a weight $\Lambda$ in $X(\Ek)$.
 Since such a weight is of the form $\chi_1 + \cdots + \chi_k$ for some weights $\chi_i \in X(\E)$, we have
 $\Zmax = k \zmax$. Similarly the highest value of $\scal{\Lambda,h}$ for $\Lambda \in X(\Ek)$ is $kr$.
 We define as in Definition \ref{defi:Ei} and \ref{defi:V} the subspaces $\Ek_i$ and
 $\V^k_i$ by the relations
 $ \displaystyle \Ek_i = \bigoplus_{\Lambda : \scal{\Lambda,z} = k\zmax-2i} \Ek_\Lambda $ and
 $ \displaystyle \V^k_i = \bigoplus_{\Lambda \in X(\Ek_i) : \scal{\Lambda,h}=kr-2i} \E_\chi \ .$
 We set $\displaystyle \V^k = \bigoplus_i \V^k_i$.
 If $\Lambda = \chi_1 + \cdots + \chi_k \in X(\Ek_i)$, then for all $l$ we have
 $\chi_l \in \E_{j_l}$ with integers $j_l$ such that $i=j_1 + \cdots + j_k$. We have the inequality
 $\scal{\chi_l,h} \leq r-2j_l$. Thus, if $\Ek_\Lambda \subset \V^k_i$, this implies
 that for each $l$, $\E_{\chi_l} \subset \V_l$. We deduce that
 $\V^k = \V^{\otimes k}$. It follows that at the level of sheaves, we will be able to
 define a subsheaf of $\cEk$ by the same trick as in (\ref{equa:filtration-y}).
 Since passing from $\E$ to $\Ek$ and from $\V$ to $\Vk$ just multiplies the slopes by $k$,
 we get the same Milnor-Wood inequality. In the case of equality, the arguments of
 Section \ref{subsec:E6} are still valid and we get that $\gamma=0$,
 proving that the map $f$ is holomorphic.
\end{proof}
}

\bibliographystyle{alpha}
\bibliography{milnor}

\end{document}